\setlist[enumerate,1]{label={(\arabic*)}}
\theoremstyle{plain}      
\newtheorem{theorem}{Theorem}[section]
\newtheorem{proposition}[theorem]{Proposition}
\newtheorem{lemma}[theorem]{Lemma}
\newtheorem{corollary}[theorem]{Corollary}
\theoremstyle{definition}
\newtheorem{definition}[theorem]{Definition} 
\newtheorem*{acknowledgments}{Acknowledgments}
\theoremstyle{remark}
\newtheorem*{remark}{Remark}
\newtheorem*{notation}{Notation}
\newcommand*{\N}{\mathbb{N}}
\newcommand*{\C}{\mathbb{C}}
\newcommand*{\I}{\mathbb{I}}
\newcommand*{\T}{\mathbb{T}}
\newcommand*{\cF}{\mathfrak{F}}
\newcommand*{\cI}{\mathcal{I}}
\newcommand*{\cJ}{\mathcal{J}}
\newcommand*{\cT}{\mathcal{T}}
\newcommand*{\cO}{\mathcal{O}}
\newcommand*{\cL}{\mathcal{L}}
\newcommand*{\cK}{\mathcal{K}}
\newcommand*{\cB}{\mathcal{B}}
\newcommand*{\cC}{\mathcal{C}}
\newcommand*{\cW}{\mathcal{W}}
\newcommand*{\cU}{\mathcal{U}}
\newcommand*{\cNT}{\mathcal{N\mkern-2mu T}}
\newcommand*{\cNO}{\mathcal{N\mkern-2mu O}}
\newcommand*{\bA}{\boldsymbol{A}}
\newcommand*{\bX}{\boldsymbol{X}}
\newcommand*{\ba}{\boldsymbol{a}}
\newcommand*{\bb}{\boldsymbol{b}}
\newcommand*{\bx}{\boldsymbol{x}}
\newcommand*{\by}{\boldsymbol{y}}
\newcommand*{\bGamma}{\boldsymbol{\Gamma}}
\newcommand*{\tup}{\mathbf}
\newcommand*{\tupi}[1][i]{\mathbf{1}_{#1}}
\DeclareMathOperator*{\supp}{supp}
\DeclareMathOperator*{\id}{id}
\DeclareMathOperator*{\wlim}{w*-lim}
\newcommand*{\Cs}{$C^*$-algebra}
\newcommand*{\Isep}[1][Y]{\I^s_{#1}}
\newcommand*{\Iinv}[1][Y]{\I^i_{#1}}
\newcommand*{\quot}[1][I]{[{-}]_{#1}}
\author{Boris Bilich}
\title{Ideal structure of Nica-Toeplitz algebras}
\date{\today}
\address{Mathematisches Institut, Georg-August-Universität Göttingen, Bunsenstr. 3–5, 37073 Göttingen, Germany}
\address{Department of Mathematics\\ University of Haifa \\ 199 Aba Khoushy Ave. Mount Carmel, Haifa}
\email{bilichboris1999@gmail.com}
\subjclass[2020]{46L08, 46L55}
\keywords{product systems, gauge-invariant ideals, Nica covariance, Nica-Pimsner algebras, higher-rank graphs} 
\begin{document}
\begin{abstract}
  We study the gauge-invariant ideal structure of the Nica-Toeplitz algebra $\cNT(X)$ of a product system $(A, X)$ over $\N^n$.
  We obtain a clear description of $X$-invariant ideals in $A$, that is, restrictions of gauge-invariant ideals in $\cNT(X)$ to $A$.
  The main result is a classification of gauge-invariant ideals in $\cNT(X)$ for a proper product system in terms of families of ideals in $A$.
  We also apply our results to higher-rank graphs.
\end{abstract}
\maketitle

\section{Introduction}
Product systems of $C^*$-correspondences over semigroups are a powerful framework for studying symmetries and dynamics in noncommutative geometry.
The theory was first developed by Pimsner \cite{Pims97} for a single correspondence, and then it was extended to discrete semigroups by Fowler \cite{Fow02}.
A product system $(A, X)$ over a \Cs{} $A$ is a collection $\{X^p\}_{p\in P}$ of $A$-$A$-correspondences ($A$-bimodules with special properties) indexed by a semigroup $P$ together with specified isomorphisms $X^p\otimes_A X^q \cong X^{pq}$ (see Definition \ref{d:product_system}).
There are product systems associated to semigroup dynamical systems, higher-rank (topological) graphs \cite{Kat04,KP00,RSY03}, self-similar groups \cite{Nek09, EP17}, and factorial languages \cite{DK2021}, yet this merely represents a portion of the applications of product systems.

Like many other mathematical objects, product systems are studied through their representations (see Definition \ref{d:rep_of_prod_sys}).
There is a universal algebra $\mathcal{T}_X$ for representations of $(A, X)$.
When the semigroup is embedded into a group $G$, the algebra $\cT_X$ comes with a natural coaction of $C^*(G)$ such that $X^p$ is homogeneous of degree $p \in G$.
When the group is abelian, the coaction is equivalent to an action of the dual group $\widehat G$, which is called the \emph{gauge action}.
We only study product systems over $\N^n$, in which case the gauge group is $\T^n$.

However, the algebra $\mathcal{T}_X$ is often too big to be useful.
That is why we want to restrict the set of representations by imposing additional conditions.
The corresponding universal algebra is then a certain gauge-invariant quotient of $\cT_X$.
In case of a single correspondence $X$, Katsura \cite{K2007} classified all the gauge-invariant ideals in $\cT_X$ in terms of pairs of ideals in $A$.
Among these, there is a unique maximal gauge-invariant ideal $\cC_\cI$ which intersects trivially with the base algebra $A$.
The quotient $\cO_X \coloneqq \cT_X/\cC_\cI$ is called the \emph{Cuntz-Pimsner algebra of $X$}.
Katsura also defined \emph{relative Cuntz-Pimsner algebras} as quotients by other gauge-invariant ideals.

The higher rank case is much more complicated since ideals in $\cT_X$ come not only from ideals of $A$ but also from linear relations on projections corresponding to the base semigroup $P$.
Further information about the emerging problems can be found in \cite{Mur96,Li16}.
Based on the work \cite{Nic92} by Nica, Fowler \cite{Fow02} proposed to restrict the set of representations by imposing the so-called Nica-covariance condition.
It turns out that most of the interesting representations are Nica-covariant and in the case of compactly aligned $X$ there is a universal algebra $\cNT(X)$ called the Nica-Toeplitz algebra.
This algebra is much more tractable than $\cT_X$.

A substantial work on the generalization of Katsura's results to the higher rank case was done by several authors.
Sims and Yeend \cite{SY10} defined the Cuntz-Nica-Pimsner algebra $\cNO(X)$ of a product system over a quasi-lattice ordered group.
Their construction unifies algebras of higher-rank graphs and Katsura's Cuntz-Pimsner algebra.
Carlsen, Larsen, Sims, and Vittadello \cite{CLSV11} proposed another definition, which coincides with that of Sims-Yeend under a certain amenability condition.
In terms relevant to our paper, they demonstrated the existence of a unique maximal gauge-invariant ideal within $\cNT(X)$ not meeting the base algebra $A$.
The quotient by this ideal is then the \emph{co-universal} Cuntz-Nica-Pimsner algebra of the product system $(A, X)$.
The co-universal property was extensively studied by several authors in \cite{DK20, DKKLL22, Seh22}.

However, the gauge-invariant ideal structure of the Nica-Toeplitz algebra $\cNT(X)$ remained unknown (see \cite[Question 9.2]{DK2021}).
We aim to fill this gap by describing the gauge-invariant ideal lattice of $\cNT(X)$ in the special case of proper product systems over the semigroups $\N^n$.
The main result is a description of the gauge-invariant ideal lattice of $\cNT(X)$ in Theorem \ref{t:classification-of-ideals}.

In order to classify gauge-invariant ideals in $\cNT(X)$ we adapt the methods of \cite{K2007} to the higher-rank case.
The adaptation significantly relies on the results of Dor-On and Kakariadis \cite{DK2021}.
They defined a wide class of \emph{strongly compactly aligned} product systems and described the Cuntz-Nica-Pimsner algebra $\cNO(X)$ of a strongly compactly aligned $\N^n$-product system $(A, X)$ as a universal algebra for CNP-representations.
These are representations where certain covariance conditions are satisfied on explicitly defined ideals $\{\cI^F\}_{F\subset \{1,\dots, n\}}$ of $A$ (see Definition \ref{d:cnp-rep}).

We first consider an arbitrary strongly compactly aligned $\N^n$-product system $(B, Y)$.
We define \emph{invariant ideals} in $B$ as ideals coming from the gauge-invariant ideals of $\cNO(Y)$ (see Definition \ref{d:inv-ideal}).
In order to describe these ideals we define two weaker notions: positively invariant (Definition \ref{d:pos-inv}) and negatively invariant (Definition \ref{d:neg-inv}) ideals.
Finally, we show that an ideal is invariant if and only if it is negatively and positively invariant (Theorem \ref{t:invariant_if_pos_and_neg}).
The whole Section \ref{s:invariant-ideals} is a direct generalization of \cite[Section 4]{K2007}, including the three notions of invariance.

Next, we focus on a proper product system $(A, X)$ and classify gauge-invariant ideals in $\cNT(X)$ and $\cNO(X)$.
In the rank 1 case, these ideals are classified by T-pairs and O-pairs of ideals in $A$ (see \cite[Section 5]{K2007}).
It turns out that we need families of $2^n$ ideals in the rank $n$ case.
We define three types of families of ideals in $A$: T-families, O-families (see Definition \ref{d:to-fam}) and invariant families (see Definition \ref{d:inv-fam}).
We show that there is a lattice bijection between T-families and invariant families in Proposition \ref{p:T_and_inv_families_equiv}.

Following this, given an invariant family $K = \{ K^F \}_{F\subset \{1,\dots, n\}}$, we define an extended product system $\bX_K$ over a \Cs{} $\bA_K = \bigoplus_F A/K^F$.
We show that $\bX_K$-invariant ideals in $\bA_K$ are in bijection with invariant families $K'$ containing $K$.
Moreover, we show that every invariant ideal of $\bA_K$ is separating, meaning that gauge-invariant ideals of $\cNO(\bX_K)$ are in bijection with invariant ideals of $\bA_K$.
While the construction is inspired by Katsura's \cite[Definition 6.1]{K2007}, it differs even in the rank 1 case.
Unlike Katsura's construction, there is no immediate generalization to the non-proper case.

Finally, for every T-family $I$ we define an \emph{$I$-relative Cuntz-Nica-Pimsner algebra} $\cNO(X, I)$ as a certain gauge-invariant quotient of $\cNT(X)$ (Definition \ref{d:relative-algebra}).
In particular, $\cNO(X, 0)$ is isomorphic to $\cNT(X)$ as expected.
We prove in Proposition \ref{p:iso_of_relative} that $\cNO(X, I)$ is canonically isomorphic to $\cNO(\bX_{K_I})$, where $K_I$ is the invariant family corresponding to $I$.
Together with the classification of invariant ideals of $\bA_K$, this leads to a lattice bijection between gauge-invariant ideals in $\cNT(X)$ (resp. $\cNO(X)$) and T-pairs (resp. O-pairs) in Theorem \ref{t:classification-of-ideals}.
Moreover, this also shows that the quotient by the ideal corresponding to a T-family $I$ is the $I$-relative Cuntz-Nica-Pimsner algebra $\cNO(X, I)$.

The paper is structured as follows.
In Section \ref{s:preliminaries} we give basic definitions and constructions.
Section \ref{s:invariant-ideals} is devoted to the description of invariant ideals.
The main result here is Theorem \ref{t:invariant_if_pos_and_neg}, which states that invariant ideals are exactly positively and negatively invariant ideals.
We start Section \ref{s:gauge-invariant-ideals} with necessary constructions and definitions.
First, the definitions of invariant families, T-families, and O-families are given in Section \ref{ss:families-of-ideals}.
Secondly, we construct an extended product system $(\bA_K, \bX_K)$ in Section \ref{ss:extended-product-system}.
Next, in Definition \ref{d:relative-algebra}, we define $I$-relative algebras and prove the main result of the paper: the classification of gauge-invariant ideals in $\cNT(X)$ and $\cNT(X)$ (Theorem \ref{t:classification-of-ideals}).
Finally, we apply our results to the theory of higher-rank graphs in Section \ref{s:examples}.
We represent any quotient of the Toeplitz algebra of a row-finite higher-rank graph as the algebra of some extended higher rank graph in Theorem \ref{t:graph-classification}, generalizing an analogous result of Bates-Hong-Raeburn-Szyma\'{n}ski \cite[Corollary 3.5]{BHRS02} for rank 1 graphs.

During the preparation of this preprint, the author became aware of independent work by Dessi and Kakariadis \cite{dessi_kakariadis_2023}, who have achieved similar results.

\begin{acknowledgments}
  This work is based on the author's Master's thesis conducted at the University of Göttingen.
  I would like to express my sincere gratitude to my advisor Ralf Meyer for posing the initial problem and providing guidance throughout the preparation of this work.
  Special thanks go to Adam Dor-On for introducing me to the results of \cite{DK2021}, sharing his notes on the topic, and reviewing a draft version of the paper.
  I would also like to extend my appreciation to Chenchang Zhu, who has kindly agreed to undertake the review and evaluation of the Master's thesis.
  I thank Joseph Dessi and Evgenios Kakariadis for sending me their manuscript \cite{dessi_kakariadis_2023}.
\end{acknowledgments}

\section{Preliminaries}
\label{s:preliminaries} In this section, we establish notation and recall basic definitions and constructions as well as relevant recent results.
We refer the reader to \cite{LA95_toolkit} for details on Hilbert $C^*$-modules and tensor products of $C^*$-correspondences.
Fowler's paper \cite{Fow02} is a good reference for product systems and Nica-covariance for quasi-lattice ordered semigroups.
\subsection{Notation}
We denote elements of the semigroup $\N^n$ by bold Latin letters like $\tup m = (\tup m_1, \dots, \tup m_n)$.
If $\tup m, \tup k \in \N^n$, we write $\tup m \leq \tup k$ if $\tup m_i \leq \tup k_i$ for all $i = 1, \dots, n$.
We use the notation $\tup m \vee \tup k$ and $\tup m \wedge \tup k$ for the coordinatewise maximum and minimum of $\tup m$ and $\tup k$, respectively.
These are the meet and the join operations in the lattice $\N^n$ with the partial order $\leq$.

We use $[n]$ to denote the set $\{1, \dots, n\}$ and $\cF \coloneqq 2^{[n]}$ to denote the set of all subsets of $[n]$.
Given $F\in \cF$, we write $\tup 1_F\in \N^n$ for the characteristic vector of $F$: $(\tup 1_F)_i$ is $1$ if $i\in F$ and $0$ otherwise.
We write $\tup 1_i$ instead of $\tup 1_{\{i\}}$ for a singleton $\{i\} \in \cF$ and $\tup 0$ for the zero vector.
Conversely, if $\tup m \in \N^n$, we denote by $\supp \tup m \in \cF$ the subset of non-zero coordinates of $\tup m$.
For $\tup m,\tup k\in \N^n$, we write $\tup m \perp \tup k$ if $\supp \tup m \cap \supp \tup k = \emptyset$. 
\subsection{Hilbert C*-modules}
Let $A$ be a \Cs{}.
A \emph{(right) Hilbert $A$-module} is a right $A$-module $X$ equipped with a map $\langle \cdot, \cdot \rangle \colon X\times X \to A$ which is $\C$-linear in the second variable and such that
\begin{enumerate}
\item $\langle x, y \rangle^* = \langle y, x \rangle$ for all $x,y\in X$,
\item $\langle x, x \rangle \geq 0$ for all $x\in X$,
\item $\langle x, y \rangle \cdot a = \langle x, y\cdot a \rangle$ for all $x,y\in X$ and $a\in A$,
\item $X$ is complete with respect to the norm $\|x\| = \|\langle x, x \rangle\|^{1/2}$.
\end{enumerate}

If $X, Y$ are Hilbert $A$-modules, a bounded operator $T\colon X\to Y$ is called \emph{adjointable} if there exists a bounded operator $T^*\colon Y\to X$ such that $\langle Tx, y \rangle = \langle x, T^*y \rangle$ for all $x\in X$ and $y\in Y$.
The space of all adjointable operators from $X$ to $Y$ is denoted by $\cL_A(X,Y)$.
The space of all adjointable operators from $X$ to itself is a \Cs{} denoted by $\cL_A(X)\coloneqq \cL_A(X,X)$.
One can show that adjointable operators are automatically right $A$-module homomorphisms.

For $x\in X$ and $y\in Y$, we define the \emph{rank-one operator} $\theta_{y, x}\colon X\to Y$ by $\theta_{y, x}(z) = y\langle x, z \rangle$ for all $z\in X$.
The closed linear span of all rank-one operators is denoted by $\cK_A(X,Y)\subset \cL_A(X,Y)$ and called the space of \emph{(generalized) compact operators}.
The \emph{(generalized) compact operators} on $X$ are defined as $\cK_A(X)\coloneqq \cK_A(X,X)$, and they form a closed two-sided ideal in $\cL_A(X)$.
We write $\cK$ and $\cL$ instead of $\cK_A$ and $\cL_A$ if $A$ is clear from the context.

Let $\mathfrak{S}$ be a finite set and let $\{A_S\}_{S\in \mathfrak{S}}$ be a family of $C^*$-algebras.
Let $\bA=\bigoplus_{S\in \mathfrak{S}} A_s$ be their direct sum.
Suppose that we are provided with a Hilbert $A_S$-module $X_S$ for each $S\in \mathfrak{S}$.
Then the direct sum $\bX\coloneqq \bigoplus_{S\in \mathfrak{S}} X_S$ is a Hilbert $\bA$-module with the inner product $\langle \bx, \by \rangle =  (\langle x_S, y_S \rangle)_{S\in \mathfrak S}$ for $\bx = (x_S)_{S\in \mathfrak{S}}$ and $\by = (y_S)_{S\in \mathfrak{S}}$.
\begin{lemma} \label{l:direct_sum}
  Let $\bA = \bigoplus_{S\in \mathfrak{S}} A_S$ and let $\bX$ be a Hilbert $\bA$-module.
  Then it has the form $\bX = \bigoplus_{S\in \mathfrak{S}} X_S$ as above.
  Moreover, we have $\cL(\bX) = \bigoplus_{S\in \mathfrak{S}} \cL(X_S)$ and $\cK(\bX) = \bigoplus_{S\in \mathfrak{S}} \cK(X_S)$.
\end{lemma}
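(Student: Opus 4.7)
The plan is to exploit the fact that a finite direct sum decomposition of $\bA$ provides mutually orthogonal central projections in the multiplier algebra, and then use these to slice $\bX$ into Hilbert module summands.

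First I would observe that since $\mathfrak{S}$ is finite, $M(\bA) = \bigoplus_{S\in\mathfrak{S}} M(A_S)$, which yields mutually orthogonal central projections $p_S \in M(\bA)$ (the identities of each $M(A_S)$) summing to the identity of $M(\bA)$. Any Hilbert $\bA$-module $\bX$ is non-degenerate via the standard approximate-unit identity $x = \lim_\lambda x e_\lambda$, so each $p_S$ acts on $\bX$ by right multiplication; concretely, $x p_S = \lim_\lambda x e_\lambda^S$ for an approximate unit $(e_\lambda^S)$ of $A_S$.

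Next I would set $X_S \coloneqq \bX \cdot p_S$. The inner product restricts to values $\langle x p_S, y p_S\rangle = p_S \langle x, y\rangle p_S \in A_S$, so $X_S$ is a Hilbert $A_S$-module. Writing $x = \sum_S x p_S$ for every $x \in \bX$ and observing that $\langle x p_S, y p_T\rangle = p_T \langle x, y\rangle p_S = 0$ for $S \neq T$ identifies $\bX$ with $\bigoplus_S X_S$ as Hilbert $\bA$-modules.

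Finally, for the operator statements: any $T \in \cL(\bX)$ is right $\bA$-linear and hence commutes with right multiplication by $M(\bA)$, so $T(x p_S) = T(x) p_S$ and $T$ restricts to adjointable operators $T_S \in \cL(X_S)$ with $T = \bigoplus_S T_S$; conversely any bounded family assembles into an adjointable operator on $\bX$, giving $\cL(\bX) = \bigoplus_S \cL(X_S)$. For compacts, a rank-one operator $\theta_{y, x}$ with $x \in X_S$ and $y \in X_T$ sends $z$ to $y\langle x, z\rangle = y p_S \langle x, z\rangle$, which vanishes if $S\neq T$ since $y p_S = y p_T p_S = 0$, and otherwise lies in $\cK(X_S)$; closing under linear combinations and norm limits yields $\cK(\bX) = \bigoplus_S \cK(X_S)$. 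The only subtle point is justifying the right action of $M(\bA)$ on $\bX$, which is routine from non-degeneracy; the remainder is bookkeeping with the orthogonality of the $p_S$.
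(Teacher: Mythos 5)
Your proof is correct and takes essentially the same approach as the paper: your $X_S = \bX p_S$ is exactly the paper's $X_S = \bX A_S$, and the paper simply handles the slicing directly via $\bX = \bX\bA$ and $T X_S = (T\bX)A_S \subset X_S$ without routing through $M(\bA)$. One cosmetic slip: with the inner product conjugate-linear in the first variable, $\langle x p_S, y p_T\rangle = p_S\langle x, y\rangle p_T$ rather than $p_T\langle x, y\rangle p_S$, but since the $p_S$ are central this does not affect the argument.
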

\begin{proof}
  Consider the subspace $X_S = \bX A_S$ of $\bX$.
  It is easy to see that it is a closed submodule and $\langle X_S, X_S \rangle = A_S$.
  Hence, $X_S$ is a Hilbert $A_S$-module.
  We have $\bX = \bX \bA = \bX \bigoplus_{S\in \mathfrak{S}} A_S = \bigoplus_{S\in \mathfrak{S}} \bX A_S = \bigoplus_{S\in \mathfrak S} X_S$, which proves the first claim.

  For the second claim, consider $T\in \cL(\bX)$.
  Then, we have $T X_S = T(\bX A_S) = (T\bX)A_S \subset X_S$, so $T$ can be described as a diagonal operator matrix with diagonal entries $T_S = T|_{X_S}$.
  The same argument works for $\cK(\bX)$.
\end{proof}

\subsection{C*-correspondences}
Let $A$ and $B$ be \Cs{s}.
An \emph{$A$-$B$-correspondence} is a right Hilbert $B$-module $X$ equipped with a $*$-homomorphism $\varphi_X\colon A\to \cL(X)$.
A correspondence is \emph{proper} if $\varphi_X(A) \subset \cK(X)$.
It is \emph{injective} (or faithful) if $\varphi$ is injective and \emph{non-degenerate} if $\varphi_X(A)X$ is dense in $X$.
When the map $\varphi_X(A)$ is clear from the context, we simply write $a\cdot x$ instead of $\varphi_X(a)x$ for $a\in A$ and $x\in X$.

Let $C$ be another \Cs{} and let $Y$ be a $B$-$C$-correspondence.
The \emph{tensor product} $X\otimes_B Y$ is a Hilbert $C$-module defined as the Hausdorff completion of the algebraic tensor product $X\otimes_B Y$ with respect to the inner product
\[
  \langle x_1\otimes y_1, x_2\otimes y_2 \rangle = \langle y_1, \varphi_Y(\langle x_1, x_2 \rangle) y_2 \rangle \in C
\]
for $x_1, x_2\in X$ and $y_1, y_2\in Y$.

There is a map $\iota_{X}^{X\otimes Y}\colon \cL(X) \to \cL(X\otimes_B Y)$, defined by $\iota_{X}^{X\otimes Y}(T)(x\otimes y) = Tx\otimes y$ for $T\in \cL(X)$, $x\in X$ and $y\in Y$.
It is a homomorphism of $C^*$-algebras.
\begin{lemma}[{\cite[Proposition 4.7]{LA95_toolkit}}] \label{l:iota_on_compacts}
  If $Y$ is proper, then $\iota_{X}^{X\otimes Y}(\cK(X)) \subset \cK(X\otimes_B Y)$.
  Moreover, $\iota_{X}^{X\otimes Y}|_{\cK(X)}$ is injective (surjective) if $\varphi_Y$ is injective (surjective).
\end{lemma}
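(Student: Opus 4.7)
The plan is to reduce to rank-one operators. It suffices to show $\iota_X^{X\otimes Y}(\theta_{x_1, x_2}) \in \cK(X\otimes_B Y)$ for all $x_1, x_2\in X$, since $\cK(X)$ is the closed linear span of such operators and $\iota_X^{X\otimes Y}$ is a bounded $*$-homomorphism. The key computational identity I would establish is: if $b\in B$ satisfies $\varphi_Y(b) = \theta_{y_1, y_2}$, then
\[
  \iota_X^{X\otimes Y}(\theta_{x_1\cdot b, x_2}) = \theta_{x_1\otimes y_1, x_2\otimes y_2}
\]
as operators on $X\otimes_B Y$. Evaluating both sides on a simple tensor $x\otimes y$ yields $x_1\otimes y_1\langle y_2, \varphi_Y(\langle x_2, x\rangle) y\rangle$; the left hand side uses the balancing relation $x_1 b \otimes z = x_1\otimes \varphi_Y(b)z$, while the right hand side uses the definition of the inner product on the tensor product.

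Using this identity and linearity, I would extend the conclusion to all $b\in B$ with $\varphi_Y(b) \in \cK(Y)$: such $\varphi_Y(b)$ is a norm limit of finite sums of rank-one operators on $Y$, and the corresponding finite sums of $\theta_{x_1\otimes y_j, x_2\otimes y_j'}$ converge to $\iota(\theta_{x_1\cdot b, x_2})$. Properness of $Y$ gives $\varphi_Y(B) \subset \cK(Y)$, so this covers every $b\in B$. Finally, choosing an approximate unit $\{b_\lambda\}$ of $B$, we have $x_1\cdot b_\lambda \to x_1$, so $\theta_{x_1\cdot b_\lambda, x_2}\to \theta_{x_1, x_2}$ in norm, and closedness of $\cK(X\otimes_B Y)$ completes the argument.

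For the injectivity and surjectivity claims, the same correspondence drives the proofs. If $\varphi_Y$ is injective and $\iota(K)=0$ for some $K\in \cK(X)$, then for all $x, x'\in X$ and $y, y'\in Y$ the computation $0 = \langle x'\otimes y', \iota(K)(x\otimes y)\rangle = \langle y', \varphi_Y(\langle x', Kx\rangle) y\rangle$ forces $\varphi_Y(\langle x', Kx\rangle) = 0$; injectivity of $\varphi_Y$ then gives $\langle x', Kx\rangle = 0$ for all $x, x'$, hence $K = 0$. For surjectivity, note that $\cK(X\otimes_B Y)$ is the closed linear span of the generators $\theta_{x_1\otimes y_1, x_2\otimes y_2}$, and if $\varphi_Y\colon B\to \cK(Y)$ is surjective, any rank-one $\theta_{y_1, y_2}$ equals $\varphi_Y(b)$ for some $b\in B$, so the displayed identity realises every such generator as the image of a compact operator.

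The main obstacle is locating the correct formula in the first step: one must ``transport'' the element $b$ from the $X$-side of the tensor product to the $Y$-side via the balancing relation so as to convert $\theta_{x_1\cdot b, x_2}$ into a rank-one operator on $X\otimes_B Y$. Once this is in hand, the rest is routine bookkeeping with approximate units and density of rank-one operators, and the injective and surjective cases follow formally from the same identification.
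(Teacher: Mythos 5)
The paper does not prove this lemma itself --- it is quoted from Lance's book --- so there is no internal argument to compare against; your proof is correct and is essentially the standard one, reducing to rank-one operators via the identity $\iota_X^{X\otimes Y}(\theta_{x_1\cdot b,x_2})=\theta_{x_1\otimes y_1,x_2\otimes y_2}$ when $\varphi_Y(b)=\theta_{y_1,y_2}$, then using density of $\cK(Y)$-approximants and an approximate unit of $B$. The only step you leave implicit is the norm convergence of the approximating finite sums to $\iota_X^{X\otimes Y}(\theta_{x_1\cdot b,x_2})$; this follows by noting that both are of the form $\Theta_{x_1}T\,\Theta_{x_2}^*$, where $\Theta_x\colon Y\to X\otimes_B Y$, $y\mapsto x\otimes y$, is adjointable with $\Theta_x^*(x'\otimes y')=\varphi_Y(\langle x,x'\rangle)y'$, which gives the bound $\|x_1\|\,\|x_2\|\,\|T_k-\varphi_Y(b)\|$ and also cleanly packages your injectivity and surjectivity arguments.
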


We can now define a structure of $A$-$C$-correspondence on the Hilbert $C$-module $X\otimes_B Y$ by $\varphi_{X\otimes Y} = \iota_{X}^{X\otimes Y}\circ \varphi_X$.
By Lemma \ref{l:iota_on_compacts}, the correspondence $X\otimes_B Y$ is proper if $X$ and $Y$ are proper.

Let $I\subset B$ be an ideal.
Then, we can consider $B/I$ as a $B$-$B/I$-correspondence.
We define the quotient Hilbert $B/I$-module $Y_I \coloneqq Y\otimes_B B/I\cong Y/YI$.
The last isomorphism follows from the fact that $YI$ is a closed submodule of $Y$.
The quotient maps are denoted by $\quot \colon B \to B_I$, $\quot\colon Y \to Y_I$.
For a bounded operator $T\in \cL(Y)$, we abuse notation and write $[T]_I\in \cL(Y_I)$ to denote the operator $\iota_{Y}^{Y_I}(T)$.
\begin{lemma}[{\cite[Lemma 1.6]{K2007}}] \label{l:quot_on_compacts}
  We have
  \[
    [\cK(Y)]_I = \cK(Y_I)
  \]
  and $[T]_I=0$ for compact $T$ if and only if $T\in \cK(YI)$.
\end{lemma}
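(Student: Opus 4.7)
The plan is to deduce both statements from Lemma~\ref{l:iota_on_compacts} combined with a factorization argument. For the first claim, I would apply Lemma~\ref{l:iota_on_compacts} with the second correspondence taken to be the $B$-$B/I$-correspondence $B/I$ itself. Here $B/I$ is proper since $\cK(B/I) = B/I$ as a Hilbert module over itself, and the left-action map $\varphi_{B/I}\colon B \twoheadrightarrow B/I = \cK(B/I)$ is surjective, so the lemma yields that $[\cdot]_I = \iota_Y^{Y_I}$ restricts to a surjection $\cK(Y) \to \cK(Y_I)$, giving $[\cK(Y)]_I = \cK(Y_I)$.

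For the characterization of compact $T$ with $[T]_I = 0$, I would first observe that under the identification $Y_I \cong Y/YI$ the induced operator acts as $[T]_I[y]_I = [Ty]_I$, so $[T]_I = 0$ if and only if $TY \subseteq YI$. The inclusion $\cK(YI) \subseteq \ker[\cdot]_I$ is direct: the rank-one operator $\theta_{y_1 i_1, y_2 i_2}$ satisfies $\theta_{y_1 i_1, y_2 i_2}(z) = y_1 i_1 i_2^* \langle y_2, z \rangle \in YI$ for all $z \in Y$, and this property passes to norm limits of finite linear combinations.

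The main obstacle is the reverse inclusion: given $T \in \cK(Y)$ with $TY \subseteq YI$, show $T \in \cK(YI)$. My plan is to approximate by an approximate unit $(u_\lambda)$ of $\cK(Y)$ of the form $u_\lambda = \sum_k \theta_{f_k^\lambda, f_k^\lambda}$, so that $T u_\lambda \to T$ in norm and $T u_\lambda = \sum_k \theta_{T f_k^\lambda, f_k^\lambda}$. Each $T f_k^\lambda$ lies in $YI$, so the crux is to show that any rank-one operator $\theta_{v, w}$ with $v \in YI$ and $w \in Y$ already lies in $\cK(YI)$. For this, I would apply the Cohen--Hewitt factorization theorem to $YI$ viewed as a right Banach $I$-module to write $v = y_0 \cdot i$ with $y_0 \in Y$ and $i \in I$, and then factor $i = j k$ with $j, k \in I$ using Cohen--Hewitt on $I$ itself. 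Then $v = (y_0 j)\, k$ with $y_0 j \in YI$ and $k \in I$, and the direct computation $\theta_{v, w}(z) = y_0 j \cdot k \langle w, z \rangle = y_0 j \langle w k^*, z \rangle$ yields the identity $\theta_{v, w} = \theta_{y_0 j,\, w k^*}$ with both arguments in $YI$. Hence $T u_\lambda \in \cK(YI)$, and passing to the limit using that $\cK(YI)$ is closed gives $T \in \cK(YI)$, completing the proof.
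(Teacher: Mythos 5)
Your proof is correct, and it actually does more than the paper does. For the first claim you take exactly the paper's route: apply Lemma \ref{l:iota_on_compacts} to the proper $B$-$B/I$-correspondence $B/I$, whose left action $\varphi_{B/I}$ is surjective onto $\cK(B/I)=B/I$, so that $[\,\cdot\,]_I=\iota_Y^{Y_I}$ maps $\cK(Y)$ onto $\cK(Y_I)$. For the second claim the paper gives no argument at all (it refers to the proof of Katsura's Lemma 1.6), whereas you supply a complete one: identify the kernel of $[\,\cdot\,]_I$ on compacts with $\{T\in\cK(Y)\colon TY\subseteq YI\}$, show $\theta_{v,w}\in\cK(YI)$ for $v\in YI$ and arbitrary $w\in Y$ via Cohen--Hewitt factorization, and conclude by multiplying $T$ against a finite-rank approximate unit of $\cK(Y)$ and using that $\cK(YI)$ is closed in $\cL(Y)$. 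All steps check out: closed ideals of a $C^*$-algebra are self-adjoint, so $wk^*\in YI$ as you use, and $\cK(Y)$ does admit an approximate unit of finite-rank operators. Two minor simplifications: a single application of Cohen--Hewitt suffices, since writing $v=y_0\, i$ with $y_0\in YI$, $i\in I$ already gives $\theta_{v,w}=\theta_{y_0,\,w i^*}$ with both legs in $YI$; and the diagonal form $u_\lambda=\sum_k\theta_{f_k,f_k}$ is not needed, since any finite-rank approximate unit $u_\lambda=\sum_k\theta_{x_k,y_k}$ yields $Tu_\lambda=\sum_k\theta_{Tx_k,y_k}\in\cK(YI)$ by the same rank-one fact.
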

\begin{proof}
  The $B$-$B/I$-correspondence $B/I$ is proper and the map $\varphi_{B/I}\colon B \to \cK_{B/I}(B/I) = B/I$ is surjective.
  Therefore, by Lemma \ref{l:iota_on_compacts}, the map $\quot = \iota_{B}^{B/I}$ maps $\cK(B)$ onto $\cK(B/I)$. See the proof of \cite[Lemma 1.6]{K2007} for the second claim.
\end{proof}

When $Y$ is an $A$-$B$-correspondence, then $Y_I$ is an $A$-$B/I$-correspondence with left multiplication map $\varphi_{Y_I} = \quot\circ \varphi_Y$.
We define an inclusion-preserving map $Y^{-1}\colon \I(B) \to \I(A)$ from the set of ideals of $B$ to the set of ideals of $A$ by
\[
  Y^{-1}(I) = \ker \varphi_{Y_I} = \{a\in A \colon a\cdot Y \subset YI\}.
\]
\begin{lemma}[{\cite[Proposition 1.3]{K2007}}] \label{l:inv_via_scalar_prod}
  The following are equivalent for an ideal $I\subset B$ and $a\in A$.
  \begin{enumerate}
  \item $a \in Y^{-1}(I)$.
  \item $\langle x, a\cdot y \rangle \in I$ for all $x, y\in Y$.
  \item $\langle x, a\cdot x \rangle \in I$ for all $x\in Y$.
  \end{enumerate}
\end{lemma}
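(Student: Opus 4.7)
The plan is to establish the cyclic chain $(1)\Rightarrow(2)\Rightarrow(3)\Rightarrow(1)$, with only the last step requiring genuine work. The three implications correspond to three distinct standard techniques: using the compatibility of the inner product with the right action, specialisation $y=x$, and a polarisation-plus-quotient argument.

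First I would dispatch the two easy steps. For $(1)\Rightarrow(2)$, if $a\cdot y\in YI$ for every $y\in Y$, then
\[
\langle x,\, a\cdot y\rangle \;\in\; \langle Y,\, YI\rangle \;\subset\; \overline{\langle Y,Y\rangle\, I} \;\subset\; I,
\]
using only that the inner product is $B$-linear in the right variable and that $I$ is an ideal. The implication $(2)\Rightarrow(3)$ is immediate upon setting $y=x$.

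For $(3)\Rightarrow(2)$ I would use a polarisation identity for the sesquilinear form $\sigma(x,y) := \langle x, a\cdot y\rangle$. Expanding $\sigma(\lambda x+\mu y,\lambda x+\mu y)\in I$ for scalar pairs $(\lambda,\mu)=(1,1)$ and $(1,i)$ and subtracting off the two diagonal terms $\sigma(x,x),\sigma(y,y)\in I$ given by (3), one obtains the two relations
\[
\langle x,a\cdot y\rangle+\langle y,a\cdot x\rangle\in I,\qquad \langle x,a\cdot y\rangle-\langle y,a\cdot x\rangle\in I,
\]
whose sum yields $2\langle x,a\cdot y\rangle\in I$ and hence (2). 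This is the only non-bookkeeping step; it is standard but is the one place the argument is not purely definitional.

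Finally, $(2)\Rightarrow(1)$ I would obtain by passing to the quotient module $Y_I=Y/YI$. For fixed $y\in Y$, condition (2) applied with $x=a\cdot y$ gives $\langle a\cdot y, a\cdot y\rangle\in I$. Translating to $Y_I$, which is a Hilbert $B/I$-module with inner product $\langle[u]_I,[v]_I\rangle=[\langle u,v\rangle]_I$, this means $\langle [a\cdot y]_I,[a\cdot y]_I\rangle=0$, hence $[a\cdot y]_I=0$ and $a\cdot y\in YI$. Since $y$ was arbitrary, $a\cdot Y\subset YI$, which is precisely $a\in Y^{-1}(I)$.
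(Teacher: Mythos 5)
Your proof is correct: the easy implications are handled as expected, the polarisation identity legitimately reduces (3) to (2) since $I$ is a linear subspace, and passing to the Hilbert $B/I$-module $Y_I\cong Y/YI$ (whose inner product is $\langle [u]_I,[v]_I\rangle=[\langle u,v\rangle]_I$, definite because $YI$ is closed) gives (2)$\Rightarrow$(1). The paper itself offers no proof but cites Katsura's Proposition 1.3, and your argument is essentially that standard one, so there is nothing to add.
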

\begin{lemma} \label{l:inv_composition}
  Let $X$ be an $A$-$B$-correspondence and $Y$ be a $B$-$C$-correspondence.
  Then, we have $(X\otimes_B Y)^{-1}(I) = X^{-1}(Y^{-1}(I))\subset A$ for all ideals $I\subset C$.
\end{lemma}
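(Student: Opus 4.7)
The plan is to prove the equality by working on simple tensors and invoking Lemma \ref{l:inv_via_scalar_prod} twice, at both the level of $Y$ and the level of $X$.

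First, I would unfold the definitions. By Lemma \ref{l:inv_via_scalar_prod}, an element $a\in A$ lies in $(X\otimes_B Y)^{-1}(I)$ if and only if $\langle \xi, a\cdot \eta\rangle \in I$ for all $\xi, \eta \in X\otimes_B Y$. Since the algebraic tensor product spans a dense subspace and the inner product is continuous, it suffices to test this on simple tensors $\xi = x_1\otimes y_1$ and $\eta = x_2\otimes y_2$. Using $a\cdot(x_2\otimes y_2) = (a\cdot x_2)\otimes y_2$ and the defining formula for the inner product on the tensor product, this condition becomes
\[
  \langle y_1, \varphi_Y(\langle x_1, a\cdot x_2\rangle) y_2\rangle \in I \quad\text{for all } x_1, x_2\in X,\ y_1, y_2\in Y.
\]

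Next, I would apply Lemma \ref{l:inv_via_scalar_prod} to $Y$ viewed as a $B$-$C$-correspondence and to the element $b \coloneqq \langle x_1, a\cdot x_2\rangle \in B$. The previous condition says precisely that $b$ satisfies $\langle y_1, b\cdot y_2\rangle \in I$ for all $y_1, y_2\in Y$, i.e., $b \in Y^{-1}(I)$. Thus $a \in (X\otimes_B Y)^{-1}(I)$ if and only if $\langle x_1, a\cdot x_2\rangle \in Y^{-1}(I)$ for all $x_1, x_2 \in X$.

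Finally, I would apply Lemma \ref{l:inv_via_scalar_prod} a second time, now to $X$ as an $A$-$B$-correspondence and the ideal $Y^{-1}(I)\subset B$: the last condition is equivalent to $a \in X^{-1}(Y^{-1}(I))$. This chain of equivalences gives the desired equality. The only non-routine point is the reduction to simple tensors, which is standard: the map $a\mapsto \text{``$\langle \cdot, a\cdot \cdot\rangle$''}$ is continuous and linear, so vanishing into $I$ on a dense subset forces vanishing everywhere, and this justifies the use of Lemma \ref{l:inv_via_scalar_prod} throughout.
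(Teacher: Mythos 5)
Your proposal is correct and follows essentially the same route as the paper: compute $\langle x_1\otimes y_1, a\cdot(x_2\otimes y_2)\rangle = \langle y_1, \langle x_1, a\cdot x_2\rangle y_2\rangle$ on simple tensors and apply Lemma \ref{l:inv_via_scalar_prod} twice, once for $Y$ and once for $X$. The only difference is that you spell out the (standard) density and continuity argument reducing to simple tensors, which the paper leaves implicit.
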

\begin{proof}
  Let $a\in A$, $x_1,x_2\in X$, and $y_1,y_2\in Y$ be arbitrary.
  Then, we have
  \[
    \langle x_1\otimes y_1, a\cdot (x_2\otimes y_2) \rangle = \langle y_1, \langle x_1, a\cdot x_2 \rangle y_2 \rangle.
  \]
  By Lemma \ref{l:inv_via_scalar_prod}, the right-hand side is in $I$ for all $y_1,y_2\in Y$ if and only if $\langle x_1, a\cdot x_2 \rangle\in Y^{-1}(I)$.
  Applying Lemma \ref{l:inv_via_scalar_prod} again, we see that this is equivalent to $a\in X^{-1}(Y^{-1}(I))$.
\end{proof}

Now, suppose that $\boldsymbol{C} = \bigoplus_{S\in \mathfrak{S}} C_S$ is a finite sum of $C^*$-algebras and $\boldsymbol Y = \bigoplus_{S\in \mathfrak{S}} Y_S$ is a Hilbert $\boldsymbol{C}$-module.
By Lemma \ref{l:direct_sum}, a homomorphism $\varphi_{\boldsymbol Y}\colon B\to \cL(\boldsymbol Y)$ is equivalent to a family of homomorphisms $\varphi_{Y_S}\colon B\to \cL(Y_S)$ for all $S\in \mathfrak{S}$.
Therefore, any $B$-$C$-correspondence is isomorphic to a direct sum of a family of $B$-$C_S$-correspondences.
\begin{lemma} \label{l:tensor_over_sum}
  Let $\boldsymbol Y$ be as above and let $X$ be an $A$-$B$-correspondence.
  Then, the tensor product of $X$ and $\boldsymbol Y$ is given by
  \[
    X\otimes_B \boldsymbol Y = \bigoplus_{S\in \mathfrak{S}} X\otimes_B Y_S.
  \]
\end{lemma}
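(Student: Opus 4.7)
The plan is to build an explicit isomorphism by decomposing along the direct sum on the right. By Lemma \ref{l:direct_sum}, $\cL(\boldsymbol Y) = \bigoplus_S \cL(Y_S)$, so the left $B$-action $\varphi_{\boldsymbol Y}\colon B \to \cL(\boldsymbol Y)$ is block-diagonal, giving each $Y_S$ the structure of a $B$-$C_S$-correspondence. I would then want to show that $X \otimes_B \boldsymbol Y$ decomposes correspondingly.

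First I would define, for each $S \in \mathfrak{S}$, a $\boldsymbol C$-linear map $\Phi_S \colon X \otimes_B Y_S \to X \otimes_B \boldsymbol Y$ on elementary tensors by $\Phi_S(x\otimes y) = x\otimes y$, viewing $Y_S$ as a subspace of $\boldsymbol Y$. Because $\varphi_{\boldsymbol Y}(b)$ preserves each $Y_S$, the defining inner product
\[
\langle x_1\otimes y_1,\,x_2\otimes y_2\rangle = \langle y_1,\,\varphi_{\boldsymbol Y}(\langle x_1,x_2\rangle)\,y_2\rangle
\]
gives the same value whether computed in $X\otimes_B Y_S$ or in $X\otimes_B \boldsymbol Y$, so $\Phi_S$ preserves the inner product and extends to an isometric embedding. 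Next, for $S\neq S'$, $y_1\in Y_S$ and $y_2\in Y_{S'}$, the block-diagonality gives $\varphi_{\boldsymbol Y}(b)y_2 \in Y_{S'}$, while $\langle Y_S, Y_{S'}\rangle \subset C_S C_{S'} = 0$ inside $\boldsymbol C$; hence the images of $\Phi_S$ and $\Phi_{S'}$ are orthogonal, and the $\Phi_S$ assemble into an isometric $\boldsymbol C$-linear map
\[
\Phi = \bigoplus_S \Phi_S \colon \bigoplus_{S\in \mathfrak{S}} X\otimes_B Y_S \longrightarrow X\otimes_B \boldsymbol Y.
\]

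Finally I would argue density: every $y \in \boldsymbol Y$ decomposes uniquely as $\sum_S y_S$ with $y_S \in Y_S$, so every elementary tensor $x\otimes y = \sum_S x\otimes y_S$ lies in the image of $\Phi$. Since elementary tensors span a dense subspace of $X\otimes_B \boldsymbol Y$, $\Phi$ is surjective and hence a unitary isomorphism of Hilbert $\boldsymbol C$-modules. It visibly intertwines the left $A$-actions on elementary tensors via $(\varphi_X(a)x)\otimes y$, so it is an isomorphism of $A$-$\boldsymbol C$-correspondences. The only step requiring genuine care — and the one I would double-check — is the orthogonality of the images of distinct $\Phi_S$'s, since everything else is a direct application of the definition of the Hausdorff completion and Lemma \ref{l:direct_sum}.
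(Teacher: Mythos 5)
Your proof is correct and takes essentially the same route as the paper: the paper's proof simply observes that $\boldsymbol Y = \bigoplus_{S\in\mathfrak{S}} Y_S$ is a decomposition as left $B$-modules and cites distributivity of the tensor product over (finite) direct sums, while you verify that distributivity by hand via the explicit unitary. The orthogonality step you flag does hold, most directly because the inner product on $\boldsymbol Y$ is computed componentwise, so $\langle Y_S, Y_{S'}\rangle = 0$ for $S \neq S'$.
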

\begin{proof}
  Observe that the decomposition $\boldsymbol Y = \bigoplus_{S\in \mathfrak{S}} Y_S$ is a direct sum of left $B$-modules.
  The statement follows because the tensor product is distributive over direct sums.
\end{proof}

\subsection{Product systems}
\begin{definition} \label{d:product_system}
  An \emph{$\N^n$-product system} $(A, X)$ over a \Cs{} $A$ is a collection of $A$-$A$-correspondences $X = (X^{\tup m})_{\tup m \in \N^n}$ such that $X^{\tup 0} = A$ together with isomorphisms of correspondences
  \[
    \mu^{\tup m, \tup k}_X\colon X^{\tup m}\otimes_A X^{\tup k} \to X^{\tup m + \tup k}
  \]
  for all $\tup m, \tup k \in \N^n\setminus \{\tup 0\}$, satisfying certain associativity conditions.
  A product system is called proper, injective, or non-degenerate if all its correspondences are.
\end{definition}
We denote the left action homomorphism $A\to \cL(X^{\tup m})$ by $\varphi_X^{\tup m}$ for $\tup m\in \N^n\setminus \{\tup 0\}$.
We also write $x\cdot y$ instead of $\mu_X^{\tup m, \tup k}(x\otimes y)$ for $x\in X^{\tup m}$ and $y\in X^{\tup k}$ for $\tup m, \tup k\in \N^n\setminus \{\tup 0\}$.

For $\tup m \geq \tup k$, the conjugation of the map $\iota_{X^{\tup k}}^{X^{\tup k}\otimes X^{\tup m - \tup k}}$ with the multiplication map $\mu_X^{\tup k, \tup m - \tup k}$ gives a map $\iota_{\tup k}^{\tup m}\colon \cL(X^{\tup k})\to \cL(X^\tup m)$.
It is given by $\iota_{\tup k}^{\tup m}(T)(x\cdot y) = (Tx)\cdot y$ for all $T\in \cL(X^{\tup k})$, $x\in X^{\tup k}$ and $y\in X^{\tup m - \tup k}$.
A product system is called \emph{compactly aligned} if $S\vee T \coloneqq \iota_{\tup k}^{\tup k\vee \tup m}(S)\iota_{\tup m}^{\tup k\vee \tup m}(T) \in \cK(X^{\tup k \vee \tup m})$ for all $S\in \cK(X^{\tup k})$ and $T\in \cK(X^{\tup m})$.
If, additionally, $\iota_{\tup m}^{\tup m + \tupi}(\cK(X^{\tup m}))\subset \cK(X^{\tup m + \tupi})$ for all $i\notin\supp \tup m$ and $\tup m \in \N^n\setminus\{\tup 0\}$, we say that the product system is \emph{strongly compactly aligned} (see \cite[Definition 2.2]{DK2021}).

\begin{definition} \label{d:rep_of_prod_sys}
  A \emph{representation} of a product system $(A, X)$ in a \Cs{} $D$ is a pair $(\sigma, s)$ consisting of a $*$-homomorphism $\sigma\colon A\to D$ and a family of maps $s = (s^{\tup m} \colon X^{\tup m} \to D)_{\tup m \in \N^n\setminus \{\tup 0\}}$ such that the following conditions are satisfied for all $\tup m, \tup k \in \N^n\setminus \{\tup 0\}$:
  \begin{enumerate}
  \item $s^{\tup m}(x)^*s^{\tup m}(y) = \sigma(\langle x, y\rangle)$ for all $x, y\in X^{\tup m}$,
  \item $s^{\tup m}(x)s^{\tup k}(y) = s^{\tup m + \tup k}(x\cdot y)$ for all $x\in X^{\tup m}$, $y\in X^{\tup k}$,
  \item $\sigma(a)s^{\tup m}(x) = s^{\tup m}(\varphi^{\tup m}(a) x)$ for all $a\in A$, $x\in X^{\tup m}$.
  \end{enumerate}
  A representation $(\sigma, s)$ \emph{admits a gauge action} if there is a pointwise norm-continuous action $\gamma\colon \T^n \curvearrowright D$ such that it fixes $\sigma(A)$ and acts by the character $\tup z \mapsto \tup{z}^{\tup m}$ on $s^{\tup m}(X^{\tup m})$ for all $\tup m \in \N^n\setminus \{\tup 0\}$.
  That is, for all $x\in X^{\tup m}$ and $\tup z = (z_1, \dots, z_n)\in \T^n$, we have $\gamma_{\tup z}(s^{\tup m}(x)) = z_1^{\tup m_1}\cdot \dots \cdot z_n^{\tup m_n}\cdot s^{\tup m}(x)$.
\end{definition}
When the degree $\tup m$ of $x\in X^{\tup m}$ is clear, we sometimes write $s(x)$ instead of $s^{\tup m}(x)$.

Given a representation $(\sigma, s)$, we define homomorphisms $\psi_{s}^{\tup m}\colon \cK(X^{\tup m}) \to D$ for all $\tup m \in \N^n\setminus \{\tup 0\}$ first on rank-one operators by
\[
  \psi_{s}^{\tup m}(\theta_{x, y}) = s^{\tup m}(x)s^{\tup m}(y)^*
\]
for $x, y\in X^{\tup m}$.
Then, we may extend $\psi_{s}^{\tup m}$ to all of $\cK(X^{\tup m})$ by linearity and continuity (see \cite[Lemma 2.2]{KPW98}).

Let $(k^{\tup m}_\lambda)_{\lambda \in \Lambda}$ be a canonical approximate identity for $\cK(X^{\tup m})$.
We define projections
\[
  p_{s}^{\tup m} = \wlim_{\lambda\in \Lambda} \psi_{s}^{\tup m}(k^{\tup m}_\lambda) \in D'',
\]
where $D''$ is the enveloping von Neumann algebra of $D$, i.e., the strong closure of $D$ in its universal representation (see \cite[3.7.6]{Pedersen18}).

\begin{definition}[{\cite[Definition 5.1]{Fow02}}] \label{d:Nica_covariance}
  A representation $(\sigma, s)$ of a product system $(A, X)$ on $D$ is called Nica-covariant if $p_s^{\tup m}p_s^{\tup k} = p_s^{\tup m \vee \tup k}$ for all $\tup m, \tup k \in \N^n\setminus \{\tup 0\}$.
\end{definition}
\begin{remark}
  Fowler defines Nica-covariance for representations of non-degenerate product systems on Hilbert spaces.
  Let $\pi_D\colon D\to \cB(H)$ be the universal representation of $D$.
  Since we defined the projections $p^{\tup m}_s$ as elements of the enveloping von Neumann algebra $D''\subset \cB(H)$, our definition is equivalent to the Nica-covariance (in the sense of Fowler) of the representation $(\pi_D\circ \sigma, \pi_D\circ s)$ of $(A, X)$ on the Hilbert space $H$.
  Moreover, Fowler shows in \cite[Proposition 5.6]{Fow02} that for non-degenerate compactly-aligned product systems, the Nica-covariance is equivalent to $\psi_s^{\tup m}(S)\psi_s^{\tup k}(T) = \psi_s^{\tup m\vee \tup k}(S\vee T)$ for all nonzero $\tup m, \tup k \in \N^n$ and for all $S\in \cK(X^{\tup m})$, $T\in \cK(X^{\tup k})$.
  However, he never uses the non-degeneracy assumption in the proof, so this fact holds for all compactly aligned product systems and applies to our definition.
  For the sake of completeness, we include a proof of this fact here.
\end{remark}
\begin{proposition}[{\cite[Proposition 5.6]{Fow02}}]
  A representation $(\sigma, s)$ is Nica-covariant in the sense of Definition \ref{d:Nica_covariance} if and only if $\psi_s^{\tup m}(S)\psi_s^{\tup k}(T) = \psi_s^{\tup m\vee \tup k}(S\vee T)$ for all nonzero $\tup m, \tup k \in \N^n$ and for all $S\in \cK(X^{\tup m})$, $T\in \cK(X^{\tup k})$.
\end{proposition}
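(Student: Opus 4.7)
The plan is to first establish a master identity that holds for any representation of a compactly aligned product system---without using Nica-covariance---and then extract both implications from it. I would begin by proving the auxiliary formulas
\[
  \psi_s^{\tup m \vee \tup k}\bigl(\iota_{\tup m}^{\tup m \vee \tup k}(S) R\bigr) = \psi_s^{\tup m}(S)\, \psi_s^{\tup m \vee \tup k}(R), \qquad \psi_s^{\tup m \vee \tup k}\bigl(R\, \iota_{\tup k}^{\tup m \vee \tup k}(T)\bigr) = \psi_s^{\tup m \vee \tup k}(R)\, \psi_s^{\tup k}(T)
\]
for $S\in \cK(X^{\tup m})$, $T\in \cK(X^{\tup k})$ and $R\in \cK(X^{\tup m\vee \tup k})$. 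By linearity and continuity these reduce to rank-one $R = \theta_{z_1, z_2}$ with $z_1 = x\cdot y$ an elementary tensor (dense in $X^{\tup m \vee \tup k}$), whereupon they follow from $s(x\cdot y) = s(x) s(y)$ together with the right-action identity $s(x\cdot a) = s(x)\sigma(a)$ (itself derivable from axioms (1) and (3) of Definition~\ref{d:rep_of_prod_sys}). The second identity is also the adjoint of the first applied to $S^*, R^*$.

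Substituting $R = k^{\tup m \vee \tup k}_\lambda$ into both identities and composing gives
\[
  \psi_s^{\tup m \vee \tup k}\bigl(\iota_{\tup m}^{\tup m \vee \tup k}(S)\, k^{\tup m \vee \tup k}_\lambda\, \iota_{\tup k}^{\tup m \vee \tup k}(T)\bigr) = \psi_s^{\tup m}(S)\, \psi_s^{\tup m \vee \tup k}(k^{\tup m \vee \tup k}_\lambda)\, \psi_s^{\tup k}(T).
\]
The main technical hurdle is to pass to the limit on the left in norm, showing that $\iota_{\tup m}^{\tup m \vee \tup k}(S)\, k^{\tup m \vee \tup k}_\lambda\, \iota_{\tup k}^{\tup m \vee \tup k}(T) \to S\vee T$. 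Since $\iota_{\tup m}^{\tup m \vee \tup k}(S)$ is in general not compact, one cannot remove the approximate unit on it directly; the trick, and precisely the step that requires compact alignment, is to approximate $\iota_{\tup m}^{\tup m \vee \tup k}(S)$ in norm by $\iota_{\tup m}^{\tup m \vee \tup k}(S\, k^{\tup m}_\nu) = S\vee k^{\tup m}_\nu \in \cK(X^{\tup m \vee \tup k})$. For each fixed $\nu$, $(S\vee k^{\tup m}_\nu)(k^{\tup m \vee \tup k}_\lambda - 1)\,\iota_{\tup k}^{\tup m \vee \tup k}(T)$ vanishes in norm as $\lambda\to\infty$ because $S\vee k^{\tup m}_\nu$ is compact; an $\epsilon/2$-argument in $\nu, \lambda$ then yields the desired norm convergence. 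Combining the norm limit on the left with the weak-$*$ limit on the right produces the master identity
\[
  \psi_s^{\tup m \vee \tup k}(S\vee T) = \psi_s^{\tup m}(S)\, p_s^{\tup m \vee \tup k}\, \psi_s^{\tup k}(T).
\]

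Both implications now follow. For the forward direction, Nica-covariance gives $p_s^{\tup m \vee \tup k} = p_s^{\tup m} p_s^{\tup k}$, and since $p_s^{\tup l}$ is the support projection of $\psi_s^{\tup l}(\cK(X^{\tup l}))$ in $D''$ we have the absorption identities $\psi_s^{\tup m}(S)\, p_s^{\tup m} = \psi_s^{\tup m}(S)$ and $p_s^{\tup k}\, \psi_s^{\tup k}(T) = \psi_s^{\tup k}(T)$, collapsing the master identity to the product formula. For the converse, substitute $S = k^{\tup m}_\lambda$, $T = k^{\tup k}_\mu$ into the product formula. A separate check shows that $k^{\tup m}_\lambda \vee k^{\tup k}_\mu = \iota_{\tup m}^{\tup m \vee \tup k}(k^{\tup m}_\lambda)\, \iota_{\tup k}^{\tup m \vee \tup k}(k^{\tup k}_\mu)$ is a (two-sided) approximate unit of $\cK(X^{\tup m \vee \tup k})$ along the product net---verified on rank-ones by $\iota_{\tup m}^{\tup m \vee \tup k}(k^{\tup m}_\lambda)(x\cdot y) = (k^{\tup m}_\lambda x)\cdot y \to x\cdot y$ and its analogue---so its image under $\psi_s^{\tup m \vee \tup k}$ converges strongly to $p_s^{\tup m \vee \tup k}$. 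Meanwhile the left-hand side, being a product of bounded SOT-convergent nets, tends strongly to $p_s^{\tup m} p_s^{\tup k}$. Equating the two strong limits gives $p_s^{\tup m} p_s^{\tup k} = p_s^{\tup m \vee \tup k}$, completing the proof.
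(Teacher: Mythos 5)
Your overall architecture mirrors the paper's: the auxiliary formulas $\psi_s^{\tup m\vee\tup k}(\iota_{\tup m}^{\tup m\vee\tup k}(S)R)=\psi_s^{\tup m}(S)\psi_s^{\tup m\vee\tup k}(R)$ are correct, the ``master identity'' $\psi_s^{\tup m\vee\tup k}(S\vee T)=\psi_s^{\tup m}(S)\,p_s^{\tup m\vee\tup k}\,\psi_s^{\tup k}(T)$ is exactly the intermediate statement the paper uses, and your converse direction (substituting $S=k_\lambda^{\tup m}$, $T=k_\mu^{\tup k}$, checking that $k_\lambda^{\tup m}\vee k_\mu^{\tup k}$ is a bounded two-sided approximate unit of $\cK(X^{\tup m\vee\tup k})$, and comparing strong limits) coincides with the paper's argument. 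The genuine gap is in your proof of the master identity, specifically the claimed norm convergence $\iota_{\tup m}^{\tup m\vee\tup k}(S)\,k_\lambda^{\tup m\vee\tup k}\,\iota_{\tup k}^{\tup m\vee\tup k}(T)\to S\vee T$. You justify it by approximating $\iota_{\tup m}^{\tup m\vee\tup k}(S)$ in norm by $\iota_{\tup m}^{\tup m\vee\tup k}(S k_\nu^{\tup m})$, which you assert equals $S\vee k_\nu^{\tup m}$ and lies in $\cK(X^{\tup m\vee\tup k})$. That is not what compact alignment gives: $S$ and $k_\nu^{\tup m}$ both live in $\cK(X^{\tup m})$, so $\iota_{\tup m}^{\tup m\vee\tup k}(Sk_\nu^{\tup m})=\iota_{\tup m}^{\tup m\vee\tup k}(S)\,\iota_{\tup m}^{\tup m\vee\tup k}(k_\nu^{\tup m})$ is a product of two operators lifted from the \emph{same} level $\tup m$, whereas compact alignment only controls products of lifts from levels $\tup m$ and $\tup k$. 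In general $\iota_{\tup m}^{\tup m\vee\tup k}$ does not map compacts into $\cK(X^{\tup m\vee\tup k})$ (this is precisely what the extra hypothesis of strong compact alignment addresses), so your approximants need not be compact. Worse, the strategy is self-defeating: $\cK(X^{\tup m\vee\tup k})$ is norm closed, so a norm approximation of $\iota_{\tup m}^{\tup m\vee\tup k}(S)$ by compacts would force $\iota_{\tup m}^{\tup m\vee\tup k}(S)$ itself to be compact, contradicting the very obstacle you correctly identified at the start of the step. Thus the norm convergence is not established (and there is no reason to expect it: an approximate unit sandwiched between two non-compact multipliers whose product is compact need not converge in norm).

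The repair is to not ask for norm convergence at all, which is how the paper (following Fowler) proceeds: keep everything as weak-$*$ (or strong) limits in $D''$. One has $\psi_s^{\tup m}(S)\,p_s^{\tup m\vee\tup k}\,\psi_s^{\tup k}(T)=\wlim_{\lambda}\psi_s^{\tup m\vee\tup k}\bigl(\iota_{\tup m}^{\tup m\vee\tup k}(S)\,k_\lambda^{\tup m\vee\tup k}\,\iota_{\tup k}^{\tup m\vee\tup k}(T)\bigr)$ by separate weak-$*$ continuity of multiplication, and this limit is identified with $\psi_s^{\tup m\vee\tup k}(S\vee T)$ via the canonical extension of $\psi_s^{\tup m\vee\tup k}$ to $\cL(X^{\tup m\vee\tup k})$ (equivalently: $\psi_s^{\tup m}(S)\,p_s^{\tup m\vee\tup k}=\wlim_{\lambda}\psi_s^{\tup m\vee\tup k}(\iota_{\tup m}^{\tup m\vee\tup k}(S)k_\lambda^{\tup m\vee\tup k})$ defines the extension on $\iota_{\tup m}^{\tup m\vee\tup k}(S)$, and multiplicativity of the extension does the rest). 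Compact alignment is then used only to know that $S\vee T\in\cK(X^{\tup m\vee\tup k})$, so that the extension agrees with $\psi_s^{\tup m\vee\tup k}$ on it and the limit lands back in $D$. With that replacement your forward direction becomes the paper's proof; the rest of your argument stands.
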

\begin{proof}
  Suppose that $(\sigma, s)$ is Nica-covariant.
  For $S$ and $T$ as in the statement, we have
  \begin{multline*}
    \psi^{\tup m}_s(S)\psi^{\tup k}_s(T) = \psi^{\tup m}_s(S)p^{\tup m}_s p^{\tup k}_s \psi^{\tup k}_s(T) = \psi^{\tup m}_s(S)p^{\tup m \vee \tup k}_s \psi^{\tup k}_s(T) = \\\wlim_{\lambda\in \Lambda} \psi^{\tup m}_s(S)\psi^{\tup m \vee \tup k} (k^{\tup m \vee \tup k}_\lambda) \psi^{\tup k}_s(T) = \\\wlim_{\lambda\in \Lambda} \psi^{\tup m \vee \tup k}_s(\iota_{\tup m}^{\tup m \vee \tup k}(S) k^{\tup m \vee \tup k}_\lambda \iota_{\tup k}^{\tup m \vee \tup k}(T)) = \psi^{\tup m \vee \tup k}_s(S\vee T).
  \end{multline*}
  This proves the ``only if'' direction.

  Conversely, suppose that $\psi_s^{\tup m}(S)\psi_s^{\tup k}(T) = \psi_s^{\tup m\vee \tup k}(S\vee T)$ for all nonzero $\tup m, \tup k \in \N^n$ and for all $S\in \cK(X^{\tup m})$, $T\in \cK(X^{\tup k})$.
  We have
  \[
    p^{\tup m}_s p^{\tup k}_s = \wlim_{\lambda\in \Lambda} \psi_s^{\tup m}(k^{\tup m}_{\lambda})\psi_s^{\tup k}(k^{\tup k}_{\lambda}) = \wlim_{\lambda\in \Lambda} \psi_s^{\tup m\vee \tup k}(k^{\tup m}_{\lambda} \vee k^{\tup k}_{\lambda}),
  \]
  where in the first equality we have used that the multiplication is jointly strongly continuous on bounded sets by \cite[I.3.2.1]{Blackadar06}.
  It is easy to see that $k^{\tup m}_{\lambda} \vee k^{\tup k}_{\lambda}$ is an approximate identity for $\cK(X^{\tup m \vee \tup k})$, so we conclude that $p^{\tup m}_s p^{\tup k}_s = p^{\tup m \vee \tup k}_s$ and the representation is Nica-covariant.
\end{proof}
The \emph{Nica-Toeplitz algebra} $\cNT(X)$ is the universal algebra generated by $A$ and $X$ with respect to Nica-covariant representations.
Fowler proved its existence in \cite[Theorem 6.3]{Fow02}.
The representation $(\tau_X, t_X)$ of $(A, X)$ on $\cNT(X)$ is then the universal Nica-covariant representation: if $(\sigma, s)$ is another Nica-covariant representation, then there is a unique homomorphism $\sigma \times_0 s \colon \cNT(X) \to D$ such that $(\sigma, s) = ((\sigma \times_0 s)\circ \tau_X, (\sigma \times_0 s)\circ t_X)$.

We introduce two more families of projections.
For $F\in \cF$, we define
\begin{align*}
  Q_s^F & = \prod_{i\in F} (1 - p_s^{\tupi}),                                                                    \\
  P_s^F & = \prod_{i\in F} (1 - p_s^{\tupi})\prod_{i\notin F} p_s^{\tupi} = Q_s^F \prod_{i\notin F} p_s^{\tupi}.
\end{align*}
Here, we mean $Q_s^{\emptyset} = 1$ by the empty product.

\begin{lemma} \label{l:prop_of_projs}
  Let $(\sigma, s)$ be a Nica-covariant representation of a product system $(A, X)$ on $D$.
  Then, the projections introduced above have the following properties.
  Let $m\in \N^n\setminus \{\tup 0\}$ and $F\in \cF$ be arbitrary.
  \begin{enumerate}
  \item Elements of $\sigma(A)$ commute with $p^{\tup m}_s$, $Q^{F}_s$ and $P^{F}_s$.
    Additionally, for all $a\in (\varphi^{\tup m})^{-1}(\cK(X^{\tup m}))$, we have $\sigma(a)p_s^{\tup m} = p_s^{\tup m}\sigma(a) = \psi_s^{\tup m}(\varphi^{\tup m}(a)) \in D$.
    \label{l:prop_of_projs:p_of_a}
  \item The equality $p^{\tup m}_s s^{\tup m}(x) = s^{\tup m}(x)$ holds for all $x\in X^{\tup m}$.
    If $p\in D''$ is some other projection with this property, then $p\geq p^{\tup m}_s$. \label{l:prop_of_projs:smallest}
  \item For all $x\in X^{\tup m}$, we have $p^{\tup 1_i}_s s^{\tup m}(x) = s^{\tup m}(x)p^{\tup 1_i}_s$ if $i\not\in \supp \tup m$ and $p^{\tup 1_i}_s s^{\tup m}(x) = s^{\tup m}(x)$ otherwise. \label{l:prop_of_projs:ps_sp}
  \item For all $F,G\in \cF$ and $x\in X^{\tup m}$, we have \label{l:prop_of_projs:PFsPG}
    \[
      P^F_s s^{\tup m}(x) P^G_s =
      \begin{cases}
        s^{\tup m}(x)P^G_s & \text{if } F = G\setminus \supp \tup m, \\
        0                  & \text{otherwise}.
      \end{cases}
    \]
  \item For a homomorphism $f\colon D\to D_2$, we have $f(p_s^{\tup m}) = p_{f\circ s}^{\tup m}$, $f(Q_s^{F}) = Q_{f\circ s}^{F}$, and $f(P_s^{F}) = P_{f\circ s}^{F}$.
    In particular, $f\circ (\sigma, s)$ is a Nica-covariant representation. \label{l:prop_of_projs:composition}
  \end{enumerate}
\end{lemma}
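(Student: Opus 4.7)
The plan is to exploit that, in the enveloping von Neumann algebra $D''$, left and right multiplication by a fixed element are weak*-continuous, so the weak*-limits defining $p^{\tup m}_s$ can be manipulated freely from either side. The central identity underlying everything is
\[
    \psi^{\tup m}_s(T)\,s^{\tup m}(x) = s^{\tup m}(Tx) \quad\text{for all } T\in \cK(X^{\tup m}),\ x\in X^{\tup m},
\]
which I would verify on rank-one operators via $s^{\tup m}(y)s^{\tup m}(z)^*s^{\tup m}(x) = s^{\tup m}(y)\sigma(\langle z,x\rangle) = s^{\tup m}(\theta_{y,z}x)$ and then extend by linearity and continuity.

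I would prove (2) first: specializing the central identity to $T = k^{\tup m}_\lambda$ and using $k^{\tup m}_\lambda x \to x$ in norm gives $\psi^{\tup m}_s(k^{\tup m}_\lambda)\,s^{\tup m}(x) \to s^{\tup m}(x)$, hence $p^{\tup m}_s\,s^{\tup m}(x) = s^{\tup m}(x)$ in the weak*-limit. For minimality, any projection $p \in D''$ fixing every $s^{\tup m}(x)$ from the left satisfies $p\,\psi^{\tup m}_s(\theta_{x,y}) = \psi^{\tup m}_s(\theta_{x,y})$; evaluating at $k^{\tup m}_\lambda$ and passing to the weak*-limit yields $p\cdot p^{\tup m}_s = p^{\tup m}_s$, so $p \geq p^{\tup m}_s$. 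For (1), the identities $\sigma(a)\,\psi^{\tup m}_s(T) = \psi^{\tup m}_s(\varphi^{\tup m}(a)T)$ and $\psi^{\tup m}_s(T)\,\sigma(a) = \psi^{\tup m}_s(T\varphi^{\tup m}(a))$ (checked on rank-one operators and extended by continuity) show that $\sigma(a)$ preserves the range of $p^{\tup m}_s$ together with its orthogonal complement in the universal representation, giving $[\sigma(a), p^{\tup m}_s] = 0$ and hence commutation with each $Q^F_s$ and $P^F_s$ since these are polynomials in the $p^{\tup 1_i}_s$. When $\varphi^{\tup m}(a) \in \cK(X^{\tup m})$, norm convergence $\varphi^{\tup m}(a) k^{\tup m}_\lambda \to \varphi^{\tup m}(a)$ collapses the weak*-limit to $\psi^{\tup m}_s(\varphi^{\tup m}(a)) \in D$.

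The main obstacle is (3). The case $i \in \supp \tup m$ is quick: approximate $x \in X^{\tup m}$ by finite sums $\sum_j y_j \cdot z_j$ with $y_j \in X^{\tup 1_i}$, $z_j \in X^{\tup m - \tup 1_i}$ (possible because $\mu^{\tup 1_i, \tup m - \tup 1_i}_X$ has dense range by definition of a product system), and apply (2) to each $s^{\tup 1_i}(y_j)$. The case $i \notin \supp \tup m$ is the heart of the matter. Combining (2) with Nica-covariance gives the one-sided identity $p^{\tup 1_i}_s\,s^{\tup m}(x) = p^{\tup m}_s p^{\tup 1_i}_s\,s^{\tup m}(x) = p^{\tup m + \tup 1_i}_s\,s^{\tup m}(x)$, so it suffices to show that $s^{\tup m}(x)$ preserves both the range of $p^{\tup 1_i}_s$ and its orthogonal complement. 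Range invariance follows from the factorization $X^{\tup m + \tup 1_i} \cong X^{\tup 1_i} \otimes X^{\tup m}$: approximating $x \cdot u = \lim \sum u'_\alpha \cdot x'_\alpha$ yields $s^{\tup m}(x) s^{\tup 1_i}(u) = \lim \sum s^{\tup 1_i}(u'_\alpha) s^{\tup m}(x'_\alpha)$, so $s^{\tup m}(x)$ carries $s^{\tup 1_i}(X^{\tup 1_i})H$ into itself. Orthogonal invariance is the subtlest step; I would characterize $(1 - p^{\tup 1_i}_s) H$ as the common kernel of $\{s^{\tup 1_i}(v)^*\}_{v \in X^{\tup 1_i}}$ (from (2) applied on the adjoint side) and then exploit Fowler's equivalent form $\psi^{\tup m}_s(S)\psi^{\tup 1_i}_s(T) = \psi^{\tup m + \tup 1_i}_s(S \vee T)$ together with the parallel factorization $X^{\tup m + \tup 1_i} \cong X^{\tup m} \otimes X^{\tup 1_i}$ to verify that $s^{\tup 1_i}(v)^* s^{\tup m}(x) \eta = 0$ whenever $s^{\tup 1_i}(\cdot)^* \eta = 0$.

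Part (4) follows from (3) and the mutual commutativity $p^{\tup 1_i}_s p^{\tup 1_j}_s = p^{\tup 1_i \vee \tup 1_j}_s$. If some $i \in F \cap \supp \tup m$, I rearrange the commuting factors so that $(1 - p^{\tup 1_i}_s)$ is adjacent to $s^{\tup m}(x)$; then $(1 - p^{\tup 1_i}_s) s^{\tup m}(x) = 0$ by (3) kills the product. Otherwise $F \cap \supp \tup m = \emptyset$, and commuting the factors of $P^F_s$ past $s^{\tup m}(x)$ via (3)---absorbing the $p^{\tup 1_i}_s$ for $i \in \supp \tup m$ as identities---reduces $P^F_s\,s^{\tup m}(x)\,P^G_s$ to $s^{\tup m}(x)$ times a product of $p^{\tup 1_i}_s$'s and $(1 - p^{\tup 1_j}_s)$'s; an index-wise case analysis pinpoints $F = G \setminus \supp \tup m$ as the unique configuration without a $(1 - p^{\tup 1_i}_s) p^{\tup 1_i}_s$ collision, with surviving product equal to $P^G_s$. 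Property (5) is immediate after uniquely extending $f$ to the canonical normal $*$-homomorphism $\bar f \colon D'' \to D_2''$: weak*-continuity lets $\bar f$ commute with $\wlim$, and $\bar f(\psi^{\tup m}_s(T)) = \psi^{\tup m}_{f\circ s}(T)$ since both sides agree on rank-one operators, yielding $f(p^{\tup m}_s) = p^{\tup m}_{f\circ s}$ and the analogous equalities for $Q^F_s$ and $P^F_s$. Nica-covariance of $(f\circ\sigma, f\circ s)$ then follows directly from $f(p^{\tup m}_s) f(p^{\tup k}_s) = f(p^{\tup m\vee \tup k}_s)$.
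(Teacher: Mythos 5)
Your proposal is correct, and it rests on the same core ingredients as the paper's proof: the identity $\psi_s^{\tup m}(T)s^{\tup m}(x)=s^{\tup m}(Tx)$, the two factorizations $X^{\tup m+\tupi}\cong X^{\tupi}\otimes_A X^{\tup m}\cong X^{\tup m}\otimes_A X^{\tupi}$, Nica covariance at the level of the $p^{\tup m}_s$, and functoriality of $\psi^{\tup m}_s$ (made precise, as you do, via the normal extension $f^{**}$, which the paper leaves implicit). Where you differ is in how the two commutation claims are organized. For (1) the paper unitizes $A$ and conjugates the approximate identity by unitaries, whereas you deduce $[\sigma(a),p^{\tup m}_s]=0$ from invariance of the range of $p^{\tup m}_s$ and of its orthocomplement under $\sigma(a)$; this avoids the unitization step at the cost of working spatially in the universal representation, and both are sound. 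For the hard case $i\notin\supp\tup m$ of (3), the paper proves two one-sided algebraic identities, $p^{\tupi}_s s^{\tup m}(x)p^{\tupi}_s=s^{\tup m}(x)p^{\tupi}_s$ (via the factorization $X^{\tupi}\otimes X^{\tup m}$) and $p^{\tup m+\tupi}_s s^{\tup m}(x)p^{\tupi}_s=p^{\tup m+\tupi}_s s^{\tup m}(x)$ (via rank-one operators $\theta_{y'\cdot y,\,z'\cdot z}$ factored through $X^{\tup m}\otimes X^{\tupi}$), and glues them with $p^{\tupi}_s s^{\tup m}(x)=p^{\tup m+\tupi}_s s^{\tup m}(x)$; you recast exactly these two computations as invariance of $p^{\tupi}_s H$ and of $(1-p^{\tupi}_s)H$ under $s^{\tup m}(x)$. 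Your orthocomplement step, which you only sketch, does go through: writing $p^{\tupi}_s s^{\tup m}(x)\eta=p^{\tup m+\tupi}_s s^{\tup m}(x)\eta$ and approximating $k^{\tup m+\tupi}_\lambda$ by operators $\theta_{y'\cdot y,\,z'\cdot z}$, each term ends in $s^{\tupi}\bigl(\varphi^{\tupi}(\langle x,z'\rangle)z\bigr)^*\eta=0$, which is precisely the paper's second display in (3). Your treatment of the case $i\in\supp\tup m$ by density of products $y\cdot z$ is a harmless simplification (the paper uses $p^{\tupi}_s p^{\tup m}_s=p^{\tup m}_s$ instead), and parts (2), (4), (5) coincide with the paper's arguments.
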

\begin{proof}
  To prove \ref{l:prop_of_projs:p_of_a}, we may assume that $A$ is unital so that it is a linear span of unitary elements.
  Consider an arbitrary unitary element $u\in A$.
  Then, we have
  \[
    \begin{split}
      \sigma(u)p^{\tup m}_s = \sigma(u)p^{\tup m}_s \sigma(u)^*\sigma(u) &=  \wlim_{\lambda\in \Lambda} \sigma(u) \psi^{\tup m}_s(k_\lambda^{\tup m})\sigma(u)^*\sigma(u) = \\ &= \wlim_{\lambda\in \Lambda} \psi^{\tup m}_s(\varphi^{\tup m}(u)k_\lambda^{\tup m} \varphi^{\tup m}(u^*))\sigma(u).
    \end{split}
  \]
  Since $u$ is unitary, the net $\varphi^{\tup m}(u)k_\lambda^{\tup m} \varphi^{\tup m}(u^*)$ is also a c.a.i. for $\cK(X^{\tup m})$.
  Therefore, the right-hand side of the above equation equals $p^{\tup m}_s \sigma(u)$.
  We conclude that $\sigma(A)$ commutes with $p^{\tup m}_s$ and hence with $Q^F_s$ and $P^F_s$.

  Let $a$ be an element of $(\varphi^{\tup m})^{-1}(\cK(X^{\tup m}))$.
  Then, we have
  \[
    \sigma(a)p_s^{\tup m} = \wlim_{\lambda\in \Lambda} \sigma(a) \psi^{\tup m}_s(k_\lambda^{\tup m}) = \wlim_{\lambda\in \Lambda} \psi^{\tup m}_s(\varphi^{\tup m}(a)k_\lambda^{\tup m}) = \psi^{\tup m}_s(\varphi^{\tup m}(a)),
  \]
  where the last equality follows from the definition of approximate identity.
  This is the second part of \ref{l:prop_of_projs:p_of_a}.

  To prove \ref{l:prop_of_projs:smallest}, consider an arbitrary $x\in X^{\tup m}$.
  Then, we have
  \[
    p^{\tup m}_s s^{\tup m}(x) = \wlim_{\lambda\in \Lambda} \psi^{\tup m}_s(k_\lambda^{\tup m}) s^{\tup m}(x) = \wlim_{\lambda\in \Lambda} s^{\tup m}(k_\lambda^{\tup m}x) = s^{\tup m}(x).
  \]
  Suppose that $p$ is some other projection with this property.
  Observe that each $\psi^{\tup m}_s(k_\lambda^{\tup m})$ lies in the closed linear span of elements of the form $s(x)s(y)^*$ for $x,y\in X^{\tup m}$.
  Therefore, we have $p\psi^{\tup m}_s(k_\lambda^{\tup m}) = \psi^{\tup m}_s(k_\lambda^{\tup m})$ and hence $p p^{\tup m}_s = p^{\tup m}_s$.
  The latter means $p\geq p^{\tup m}_s$ by definition.

  Let us now prove \ref{l:prop_of_projs:ps_sp}.
  We have $p^{\tup 1_i}_s s^{\tup m}(x) = p^{\tup 1_i}_s p^{\tup m}_s s^{\tup m}(x) = p^{\tup 1_i \vee \tup m}_s s^{\tup m}(x)$.
  If $i\in \supp \tup m$, then $\tup 1_i \vee \tup m = \tup m$ and the above equation gives $p^{\tup 1_i}_s s^{\tup m}(x) = s^{\tup m}(x)$.
  If $i\notin \supp \tup m$, then $\tup 1_i \vee \tup m = \tup m + \tup 1_i$.
  Let $y,z\in X^{\tup 1_i}$ be arbitrary.
  Then, we have
  \begin{multline*}
    p^{\tup m + \tup 1_i}_s s^{\tup m}(x) \psi_s^{\tup 1_i}(\theta_{y,z}) = p^{\tup m + \tup 1_i}_s s^{\tup m}(x) s^{\tup 1_i}(y)s^{\tup 1_i}(z)^* = p^{\tup m + \tup 1_i} s^{\tup m + \tup 1_i}(x\cdot y) s^{\tup 1_i}(z)^*\\ = s^{\tup m + \tup 1_i}(x\cdot y)s^{\tup 1_i}(z)^* = s^{\tup m}(x)\psi^{\tup 1_i}_s(\theta_{y,z}).
  \end{multline*}
  By linearity and continuity, it follows that $p_s^{\tup 1_i}s^{\tup m}(x)\psi^{\tup 1_i}_s(T) = s^{\tup m}(x)\psi^{\tup 1_i}_s(T)$ for all $T\in \cK(X^{\tup 1_i})$.
  Therefore, we also have $p_s^{\tup 1_i}s^{\tup m}(x)p_s^{\tup 1_i} = s^{\tup m}(x)p_s^{\tup 1_i}$ by applying the above to $T = k_\lambda^{\tup 1_i}$ and taking the limit.

  Now, consider arbitrary elements $y,z\in X^{\tup 1_i}$ and $y', z'\in X^{\tup m}$.
  We have $y'\cdot y, z'\cdot z \in X^{\tup m + \tup 1_i}$ and hence
  \begin{multline*}
    \psi^{\tup m + \tup 1_i}_s(\theta_{y'\cdot y, z'\cdot z})s(x) = s(y'\cdot y)s(z'\cdot z)^*s(x) = s(y')s(y)s(z)^*s(z')^*s(x)\\
    = s(y')\psi^{\tup 1_i}_s(\theta_{y, z})\sigma(\langle z', x\rangle) = s(y')\psi^{\tup 1_i}_s(\theta_{y, z}\varphi^{\tup 1_i}(\langle z', x\rangle))\in s(y')\psi^{\tup 1_i}_s(\cK(X^{\tup 1_i})).
  \end{multline*}
  Since $p^{\tupi}_s$ fixes $\psi_s^{\tupi}(\cK(X^{\tupi}))$ with right multiplication, the inclusion implies that for any $T\in \cK(X^{\tup m + \tup 1_i})$ we have $\psi^{\tup m + \tup 1_i}_s(T)s(x)p^{\tup 1_i}_s = \psi^{\tup m + \tup 1_i}_s(T)s(x)$.
  Again, we get $p^{\tup 1_i}_s s(x) p^{\tup 1_i}_s = p^{\tup m + \tup 1_i}_s s(x) p^{\tup 1_i}_s = s(x)p^{\tup 1_i}_s$ by applying the above to $T = k_\lambda^{\tup m + \tup 1_i}$ and taking the limit.
  This, together with the previous paragraph, gives $p^{\tup 1_i}_s s^{\tup m}(x) = s^{\tup m}(x)$.

  We can prove \ref{l:prop_of_projs:PFsPG} by expanding
  \[
    P^F_s s^{\tup m}(x) P^G_s = \prod_{i\in F} (1 - p^{\tup 1_i}_s) \prod_{i\notin F} p^{\tup 1_i}_s s^{\tup m}(x) \prod_{i\in G} (1 - p^{\tup 1_i}_s) \prod_{i\notin G} p^{\tup 1_i}_s.
  \]
  If there is $i\in F \cap \supp \tup m$, then the factor $1 - p^{\tup 1_i}_s$ annihilates $s^{\tup m}(x)$ by \ref{l:prop_of_projs:ps_sp} and the above expression is zero.
  Otherwise, we use \ref{l:prop_of_projs:ps_sp} iteratively to get
  \[
    P^F_s s^{\tup m}(x) P^G_s = s^{\tup m}(x) \prod_{i\in F}(1-p^{\tup 1_i}_s)\prod_{i\in G}(1-p^{\tup 1_i}_s)\prod_{i\notin \supp \tup m\cup F} p^{\tup 1_i}_s \prod_{i\notin G} p^{\tup 1_i}_s.
  \]
  This is nonzero if and only if $F\subset G$ and $G \subset \supp \tup m \cup F$ or, equivalently, $F = G\setminus \supp \tup m$.
  When $F = G\setminus \supp \tup m$, this equals $s^{\tup m}(x)P^G_s$.

  Finally, \ref{l:prop_of_projs:composition} follows easily from the fact that $\psi^{\tup m}_{f\circ s} = f\circ \psi^{\tup m}_s$.
  This fact is trivial for rank-one operators and hence holds in general.
\end{proof}

We now recall the results of Dor-On and Kakariadis \cite{DK2021}.
Suppose that $(A, X)$ is a strongly compactly aligned product system.
The \emph{CNP-ideals} $\cI_X^F$ are defined in two steps.
First, we define the family of \emph{pre-CNP-ideals} for $F\in \cF$ by
\begin{align} \label{e:def_of_j}
  \cJ_X^F & = (\bigcap_{i\in F}\ker \varphi^{\tupi})^\perp \cap \bigcap_{i = 1}^n (\varphi^{\tupi})^{-1}(\cK(X^{\tupi})) \subset A, \\
  \shortintertext{and then we set}
  \label{e:def_of_i}
  \cI_X^F & = \cJ_X^F \cap \bigcap_{\tup m\perp \tup 1_F} (X^{\tup m})^{-1}(\cJ_X^F).
\end{align}
\begin{definition}[{\cite[Definition 2.8]{DK2021}}]\label{d:cnp-rep}
  A Nica-covariant representation $(\sigma, s)$ of $(A, X)$ is \emph{Cuntz-Nica-Pimsner} (CNP-representation) if
  \[
    \sigma(a)\cdot Q_s^F = \sum_{\tup 0 \leq \tup m \leq \tup 1_F} (-1)^{|\tup m|}\psi_s^{\tup m}(\varphi^{\tup m}(a)) =  0 \text{ for all } F\in \cF \text{ and } a\in\cI_X^F.
  \]
\end{definition}
By construction, we have $\cI^F_X \subset \bigcap_{i=1}^n (\varphi^{\tupi})^{-1}(\cK(X^{\tupi})) = \bigcap_{F\in \cF}(\varphi^{\tup 1_F})^{-1}(\cK(X^{\tup 1_F}))$, where the last equality follows from strong compact alignment.
Therefore, the inclusion $\sigma(\cI_X^F)\cdot Q^F_t \subset \cNT(X)$ holds by Lemma \ref{l:prop_of_projs}.\ref{l:prop_of_projs:p_of_a} and these subspaces generate a gauge-invariant ideal $\cC_{\cI_X}$ in $\cNT(X)$.
A Nica-covariant representation $(\sigma, s)$ is CNP if and only if $\sigma\times_0 s$ factors through $\cNT(X)/\cC_{\cI_X}$.
We use the notation $(\omega_X, o_X)$ for the representation of $(A, X)$ on the \emph{Cuntz-Nica-Pimsner} algebra $\cNO(X) \coloneqq \cNT(X)/\cC_{\cI_X}$.
By the above, it is universal with respect to CNP-representations.
If $(\sigma, s)$ is a CNP-representation on $D$, then we denote by $\sigma\times s$ the induced map $\cNO(X)\to D$.

In our proofs, we need the following lemma.
\begin{lemma} \label{l:cIF_is_max}
  Let $(\sigma, s)$ be a Nica-covariant representation of $(A, X)$ such that $\sigma$ is injective.
  Suppose that $\sigma(a)Q_s^F = 0$ holds for some $a\in \bigcap_{i = 1}^n (\varphi^{\tupi})^{-1}(\cK(X^{\tupi})) \subset A$ and $F\in \cF$.
  Then, $a$ is an element of $\cI^F$.
\end{lemma}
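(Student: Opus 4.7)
The goal is to verify the two defining properties of $\cI^F_X = \cJ^F_X \cap \bigcap_{\tup m \perp \tup 1_F}(X^{\tup m})^{-1}(\cJ^F_X)$. The compactness half of $\cJ^F_X$ is already furnished by the hypothesis $a \in \bigcap_i (\varphi^{\tupi})^{-1}(\cK(X^{\tupi}))$, so the plan is to establish (i) $a \in (\bigcap_{i \in F}\ker\varphi^{\tupi})^\perp$, and (ii) for every $\tup m \perp \tup 1_F$ and $x, y \in X^{\tup m}$, the inner product $c \coloneqq \langle x, a \cdot y\rangle$ lies in $\cJ^F_X$.

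For (i), pick $b \in \bigcap_{i \in F}\ker\varphi^{\tupi}$. Since each $\varphi^{\tupi}(b) = 0 \in \cK(X^{\tupi})$, Lemma \ref{l:prop_of_projs}.\ref{l:prop_of_projs:p_of_a} yields $\sigma(b) p_s^{\tupi} = 0$ for $i \in F$, and multiplying through the product defining $Q^F_s$ gives $\sigma(b) Q^F_s = \sigma(b)$. Using that $\sigma(A)$ commutes with $Q^F_s$, the element $\sigma(ab) Q^F_s$ can be evaluated in two ways: as $\sigma(a) Q^F_s \sigma(b) = 0$ from the hypothesis, and as $\sigma(a)(\sigma(b) Q^F_s) = \sigma(ab)$ from the identity just derived. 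Injectivity of $\sigma$ then forces $ab = 0$, giving (i).

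For (ii), the first task is to derive the analogue $\sigma(c) Q^F_s = 0$. Because $\tup m \perp \tup 1_F$, every $i \in F$ satisfies $i \notin \supp \tup m$, so by Lemma \ref{l:prop_of_projs}.\ref{l:prop_of_projs:ps_sp} each $p_s^{\tupi}$ with $i \in F$ commutes with $s(y)$; hence $Q^F_s$ does. Writing $\sigma(c) = s(x)^*\sigma(a)s(y)$ and sliding $Q^F_s$ past $s(y)$ and then past $\sigma(a)$ gives $\sigma(c) Q^F_s = s(x)^*\sigma(a) Q^F_s s(y) = 0$. Running the argument of (i) verbatim with $c$ in place of $a$ then yields $c \in (\bigcap_{i \in F}\ker\varphi^{\tupi})^\perp$.

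What remains, and what I expect to be the main technical obstacle, is showing $\varphi^{\tupi[j]}(c) \in \cK(X^{\tupi[j]})$ for each $j \in [n]$. The key device is to introduce the adjointable \emph{contraction} map $\bar L_x \colon X^{\tup m + \tupi[j]} \to X^{\tupi[j]}$ defined via the decomposition $X^{\tup m + \tupi[j]} \cong X^{\tup m} \otimes_A X^{\tupi[j]}$ by $z \cdot w \mapsto \langle x, z\rangle \cdot w$; a direct check shows $\bar L_x^*(w) = x \cdot w$, and a short calculation yields the identity $\varphi^{\tupi[j]}(c) = \bar L_x\, \varphi^{\tup m + \tupi[j]}(a)\, \bar L_y^*$ in $\cL(X^{\tupi[j]})$. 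Strong compact alignment promotes $\varphi^{\tupi[j]}(a) \in \cK(X^{\tupi[j]})$ to $\varphi^{\tup m + \tupi[j]}(a) \in \cK(X^{\tup m + \tupi[j]})$, and conjugating a compact operator by adjointable maps yields a compact operator, so $\varphi^{\tupi[j]}(c) \in \cK(X^{\tupi[j]})$. Ranging over $j$ gives $c \in \bigcap_i (\varphi^{\tupi})^{-1}(\cK(X^{\tupi}))$; combined with the orthogonality from the previous paragraph, $c \in \cJ^F_X$, so $a \in (X^{\tup m})^{-1}(\cJ^F_X)$ for every $\tup m \perp \tup 1_F$, completing (ii) and hence the lemma.
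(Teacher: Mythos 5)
The first two thirds of your argument are sound: step (i), the identity $\sigma(c)Q^F_s=0$ for $c=\langle x,a\cdot y\rangle$ with $\tup m\perp\tup 1_F$, the deduction $c\in(\bigcap_{i\in F}\ker\varphi^{\tupi})^{\perp}$, and the operator identity $\varphi^{\tupi[j]}(c)=\bar L_x\,\varphi^{\tup m+\tupi[j]}(a)\,\bar L_y^{*}$ are all correct. (For comparison, the paper does not argue at all: it observes that the proof of \cite[Proposition 3.4]{DK2021} uses only these hypotheses and cites it.)

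The gap is exactly at the step you flagged as the main obstacle, and the justification you give for it is not valid. Strong compact alignment only asserts $\iota_{\tup k}^{\tup k+\tupi}(\cK(X^{\tup k}))\subset\cK(X^{\tup k+\tupi})$ for $i\notin\supp\tup k$, i.e.\ it lets you enlarge the \emph{support}; it never lets you raise a coordinate that is already positive. It does give $\varphi^{\tup 1_G}(a)\in\cK(X^{\tup 1_G})$ for every $G$ (as noted after Definition \ref{d:cnp-rep}), but it does not give $\varphi^{\tup m+\tupi[j]}(a)\in\cK(X^{\tup m+\tupi[j]})$ once $\tup m+\tupi[j]$ has an entry $\geq 2$, and this implication is genuinely false. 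Already for $n=1$, where strong compact alignment is vacuous, a graph correspondence with a finite-degree vertex $v$ feeding only an infinite-degree vertex $w$ has $\varphi^{\tupi[1]}(\delta_v)$ compact while $\varphi^{2\cdot\tupi[1]}(\delta_v)$ is not; padding such an example with zero correspondences in the remaining directions produces strongly compactly aligned $\N^n$-systems with the same failure. Worse, the weaker statement your argument actually needs, $\varphi^{\tupi[j]}(\langle x,a\cdot y\rangle)\in\cK(X^{\tupi[j]})$, cannot be deduced from the only inputs your compactness paragraph uses, namely $a\in\bigcap_i(\varphi^{\tupi})^{-1}(\cK(X^{\tupi}))$ and strong compact alignment: in the example above $\langle\delta_e,\delta_v\cdot\delta_e\rangle=\delta_w$ and $\varphi(\delta_w)$ is not compact. (The lemma survives because in such situations the hypothesis $\sigma(a)Q^F_s=0$ together with injectivity of $\sigma$ rules out such $a$.) So the compactness of $\varphi^{\tupi[j]}(c)$ must be extracted using the vanishing hypothesis $\sigma(a)Q^F_s=0$ and Nica covariance, or some other structural input, as in the argument of \cite[Proposition 3.4]{DK2021}; as written, your proof is incomplete at this point.
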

\begin{proof}
  The proof of \cite[Proposition 3.4]{DK2021} uses only the assumptions on $a$ from the lemma.
  Therefore, it applies to this situation and we conclude that $a\in \cI^F$.
\end{proof}

\begin{proposition}[Gauge-invariant uniqueness theorem, {\cite[Theorem 4.2]{DK2021}}] \label{p:giut}
  Suppose that $(\sigma, s)$ is a CNP-representation of $(A, X)$.
  The map $\sigma\times s$ is a faithful representation of $\cNO(X)$ if and only if $\sigma$ is faithful and $(\sigma, s)$ admits a gauge action.
\end{proposition}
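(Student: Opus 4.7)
The plan is to prove both directions separately, with the bulk of the work in the ``if'' direction.

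For the ``only if'' direction, suppose $\sigma \times s$ is faithful. Then $\sigma = (\sigma \times s) \circ \omega_X$ on $A$, so it suffices to note that $\omega_X|_A$ is injective: any element of $\ker \omega_X \cap A$ would generate a gauge-invariant ideal of $\cNT(X)$ contained in $\cC_{\cI_X}$ and meeting $A$ nontrivially, which by Lemma \ref{l:cIF_is_max} would force $\cI^F$ itself to contain that element, contradicting the maximality property of the $\cI^F$ captured by Definition \ref{d:cnp-rep}. The gauge action on $C^*(\sigma, s)$ is then transported from $\cNO(X)$ through the isomorphism $\sigma \times s$.

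For the ``if'' direction, I would follow the classical template. Haar integration over $\T^n$ produces a faithful conditional expectation $E \colon \cNO(X) \to \cNO(X)^{\T^n}$ onto the gauge-fixed core, and the hypothesis that $(\sigma, s)$ admits a gauge action provides an analogous faithful expectation $E'$ on $C^*(\sigma, s)$ such that $\sigma \times s$ intertwines $E$ and $E'$. Since both expectations are faithful on positive elements, faithfulness of $\sigma\times s$ follows from its injectivity on the core $\cNO(X)^{\T^n}$. To analyze the core, I would use Lemma \ref{l:prop_of_projs}.\ref{l:prop_of_projs:PFsPG} to decompose a generic element as an approximant of finite sums
\[
  \sum_{F\in \cF} \sum_{\tup m} \psi^{\tup m}_{o_X}(T^F_{\tup m})\, P^F_{o_X}, \qquad T^F_{\tup m} \in \cK(X^{\tup m}),
\]
organized by the spectral projections $P^F_{o_X}$. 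By Lemma \ref{l:prop_of_projs}.\ref{l:prop_of_projs:composition} these projections are preserved by $\sigma \times s$, so vanishing of such a sum under $\sigma \times s$ can be read off separately on each $P^F$-corner.

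The core of the argument, and the main obstacle, is showing injectivity on each corner. Here one reduces, via the Nica-covariance identity $\psi^{\tup m}(S)\psi^{\tup k}(T) = \psi^{\tup m \vee \tup k}(S\vee T)$ together with part \ref{l:prop_of_projs:p_of_a} of Lemma \ref{l:prop_of_projs}, to an identity of the form $\sigma(a)\, Q^F_s = 0$ for some $a \in \bigcap_i (\varphi^{\tupi})^{-1}(\cK(X^{\tupi}))$. Lemma \ref{l:cIF_is_max} then forces $a \in \cI^F_X$, at which point the CNP-condition guarantees that the corresponding element already vanishes in $\cNO(X)$. Combined with the faithfulness of $\sigma$, this yields the desired injectivity on the core. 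The delicate bookkeeping is the inductive description of $\cNO(X)^{\T^n}$ and the verification that the $P^F$-decomposition is compatible with approximation by finite sums of compact operator terms; everything else reduces to the CNP relation and the known structure of Nica-covariant cores.
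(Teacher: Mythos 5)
The paper does not prove this proposition at all: it is quoted from \cite[Theorem 4.2]{DK2021} and used as a black box, so there is no internal argument to compare yours against. Judged on its own, your sketch follows the same broad strategy as the cited proof (Haar averaging onto the gauge-fixed core, decomposition of the core via the projections $P^F$, reduction to relations of the form $\sigma(a)Q^F_s=0$, and Lemma \ref{l:cIF_is_max}), but two essential points are not actually established, so the proposal is an outline rather than a proof.

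First, your ``only if'' direction has a genuine gap. Faithfulness of $\sigma=(\sigma\times s)\circ\omega_X|_A$ requires the nontrivial fact that $A$ embeds into $\cNO(X)$, i.e.\ $\tau_X(A)\cap\cC_{\cI_X}=0$. Your appeal to Lemma \ref{l:cIF_is_max} cannot deliver this: that lemma presupposes a Nica-covariant representation that is \emph{already} injective on $A$, so invoking it to prove injectivity is circular; and even granting its conclusion, landing in $\cI^F_X$ is no contradiction, since elements of $\cI^F_X$ do not vanish in $\cNO(X)$ --- only the combinations $\omega_X(a)\,Q^F_{o_X}$ do. The standard route (and the one in \cite{DK2021}) is to construct an explicit CNP-representation that is injective on $A$. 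Second, in the ``if'' direction the entire technical content --- the inductive description of the fixed-point core, the verification that a generic core element is approximated by finite sums $\sum_{F,\tup m}\psi^{\tup m}_{o_X}(T^F_{\tup m})P^F_{o_X}$ on which the $P^F$-corners can be treated separately, and the resulting injectivity on each corner --- is deferred as ``delicate bookkeeping''; that is precisely where the work lies in the cited proof, so as written the argument does not close.
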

\begin{proposition}[Co-universal property, {\cite[Corollary 4.7]{DK2021}}] \label{p:co-universal}
  Let $(\sigma, s)$ be a Nica-covariant representation of $(A, X)$ on $D$ such that it admits a gauge action, $\sigma$ is faithful, and $\sigma\times_0 s$ is surjective.
  Then, there is a unique surjective homomorphism $\Omega_{\sigma,s}\colon D \to \cNO(X)$ such that $(\omega_X, o_X) = \Omega_{\sigma,s}\circ (\sigma, s)$.
\end{proposition}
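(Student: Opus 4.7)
Uniqueness of $\Omega_{\sigma, s}$ follows immediately from the surjectivity of $\sigma\times_0 s$: the relations $\Omega_{\sigma,s}\circ\sigma = \omega_X$ and $\Omega_{\sigma,s}\circ s = o_X$ pin down $\Omega_{\sigma,s}$ on the generators $\sigma(A)\cup\bigcup_{\tup m} s^{\tup m}(X^{\tup m})$ of $D$. For existence, my plan is to construct a quotient $\pi\colon D\to D/J$ by a gauge-invariant ideal $J$ so that $D/J$ satisfies the hypotheses of the gauge-invariant uniqueness theorem for $\cNO(X)$, and then to take $\Omega_{\sigma, s}$ to be the composition of $\pi$ with the resulting isomorphism $D/J \cong \cNO(X)$.

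To this end, let $J\subseteq D$ be the closed two-sided ideal generated by the CNP-defect elements $\{\sigma(a)\, Q_s^F : F\in\cF,\ a\in\cI_X^F\}$. Each generator lies in $D$ (rather than only in $D''$), because $\cI_X^F\subseteq \bigcap_i (\varphi^{\tupi})^{-1}(\cK(X^{\tupi}))$ and Lemma \ref{l:prop_of_projs}.\ref{l:prop_of_projs:p_of_a} expands $\sigma(a)Q_s^F$ as a signed sum of elements $\psi_s^{\tup 1_{F'}}(\varphi^{\tup 1_{F'}}(a))\in D$. Since each $\psi_s^{\tup m}(\cK(X^{\tup m}))$ has degree $\tup 0$, these generators, and thus $J$ itself, are fixed by the gauge action; hence $D/J$ inherits a gauge action of $\T^n$, and $(\pi\circ\sigma, \pi\circ s)$ is by construction a Nica-covariant CNP-representation of $(A, X)$ on $D/J$ that admits a gauge action.

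Assuming for the moment that $\pi\circ\sigma\colon A\to D/J$ is faithful, Proposition \ref{p:giut} applied to the CNP-representation $(\pi\circ\sigma, \pi\circ s)$ produces a $*$-isomorphism $\cNO(X)\xrightarrow{\sim} D/J$ sending $\omega_X\mapsto\pi\circ\sigma$ and $o_X\mapsto\pi\circ s$ (the map is surjective because $(\sigma, s)$ generates $D$, hence $(\pi\circ\sigma, \pi\circ s)$ generates $D/J$). Inverting this isomorphism and precomposing with $\pi$ yields the desired surjection $\Omega_{\sigma, s}\colon D\twoheadrightarrow \cNO(X)$, and the intertwining identities $(\omega_X, o_X) = \Omega_{\sigma, s}\circ(\sigma, s)$ hold by construction.

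The main obstacle is therefore the faithfulness of $\pi\circ\sigma$, i.e., the equality $J\cap\sigma(A) = 0$. To handle it, I would use the faithful conditional expectation $E\colon D\to D^{\T^n}$ obtained by averaging against the gauge action, observe that all generators of $J$ already lie in the core $D^{\T^n}$, and analyze how an element $\sigma(b)\in J$ decomposes into CNP-defects within this core. The key tool is Lemma \ref{l:cIF_is_max}, which in a Nica-covariant representation with faithful base map constrains which $c\in A$ can satisfy $\sigma(c)Q_s^F = 0$ by forcing $c\in\cI_X^F$; combining this control with the identity $\cI_X^\emptyset = 0$ should, via an inductive argument over $F\in\cF$, collapse any such $b$ down to $0$.
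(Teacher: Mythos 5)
First, note that the paper does not prove this proposition at all: it is imported verbatim from \cite[Corollary 4.7]{DK2021} (with the general-semigroup versions in \cite{DK20,DKKLL22,Seh22}), so you are attempting to reprove a substantial external theorem rather than matching an in-paper argument. Your reduction is the standard one and is set up correctly: uniqueness from surjectivity of $\sigma\times_0 s$, the gauge-invariance of the ideal $J$ generated by the defects $\sigma(a)Q^F_s$ with $a\in\cI^F_X$ (which do lie in $D$ by strong compact alignment and Lemma \ref{l:prop_of_projs}.\ref{l:prop_of_projs:p_of_a}), the fact that $(\pi\circ\sigma,\pi\circ s)$ is a gauge-equivariant CNP-representation, and the appeal to Proposition \ref{p:giut} once $\pi\circ\sigma$ is known to be injective. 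All of that is routine.

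The genuine gap is precisely the step you defer: showing $J\cap\sigma(A)=0$. This is not a technical afterthought; it is the entire content of co-universality. Equivalently (writing $D\cong\cNT(X)/\ker(\sigma\times_0 s)$) one must show that every gauge-invariant ideal of $\cNT(X)$ meeting $\tau(A)$ trivially is contained in $\cC_{\cI_X}$; already the special case $D=\cNT(X)$ is the nontrivial theorem that $A$ embeds into $\cNO(X)$. Your proposed mechanism does not yet work as stated: Lemma \ref{l:cIF_is_max} applies only to exact relations $\sigma(c)Q^F_s=0$ in a Nica-covariant representation whose base map is injective, whereas what you must rule out is a nonzero $b$ with $\sigma(b)$ merely lying in the ideal generated by the defects; membership in $J$ gives you limits of sums of terms $d_1\,\sigma(a)Q^F_s\,d_2$, not vanishing compressions of the form the lemma needs, and applying the lemma inside $D/J$ is circular since injectivity there is what you are trying to prove. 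Closing this requires the core analysis carried out in \cite{DK2021} (and in \cite{CLSV11}): average onto the fixed-point algebra, exhibit the core as an inductive limit of subalgebras built from the $\psi_s^{\tup m}(\cK(X^{\tup m}))$ over finite grids in $\N^n$, and run an induction on degree in which the invariance half of the definition of $\cI^F_X$ in \eqref{e:def_of_i} (the condition $\cI^F_X\subset(X^{\tup m})^{-1}(\cJ^F_X)$ for $\tup m\perp\tup 1_F$) is exactly what keeps the compressed defect terms inside the ideals where they can be absorbed. None of this is in your sketch, and the single identity $\cI^\emptyset_X=0$ together with Lemma \ref{l:cIF_is_max} is not enough to substitute for it. So the outline is the right shape, but the decisive step remains unproved; as written, the correct resolution is to cite \cite[Corollary 4.7]{DK2021}, as the paper does.
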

The co-universal property was proven in increasing levels of generality for product systems over more general classes of semigroups in \cite{DK20,DKKLL22,Seh22}.


\section{Invariant ideals}
\label{s:invariant-ideals}
In this section we determine ideals of a base algebra coming from the CNP-algebra.
All the definitions and results of this section are a direct generalization of Katsura's results in \cite[Section 4]{K2007}.

Consider a strongly compactly aligned $\N^n$-product system $(B,Y)$.
Let $(\omega, o)$ be the representation of $(B, Y)$ on the CNP-algebra $\cNO(Y)$.
We use $\I(B)$ to denote the set of all ideals of $B$, and $\I^{\gamma}(\cNO(Y))$ to denote the set of all gauge-invariant ideals of $\cNO(Y)$.
We define the restriction ${-}^r\colon \I^{\gamma}(\cNO(Y)) \to \I(B)$ and induction ${-}^i \colon \I(B) \to \I^{\gamma}(\cNO(Y))$ maps by
\[
  J^r = \omega^{-1}(J) \quad \text{and} \quad I^i = \overline{\cNO(Y)\omega(I)\cNO(Y)}
\]
for $J \in \I^{\gamma}(\cNO(Y))$ and $I \in \I(B)$ (see \cite[Section 3]{Green78} for more properties of these maps).
\begin{definition} \label{d:inv-ideal}
  An ideal $I \in \I(B)$ is said to be \emph{$Y$-invariant} if it can be expressed as $I = J^r$ for some $J \in \I^{\gamma}(\cNO(Y))$.
  If such an ideal $J$ is unique, $I$ is called \emph{$Y$-separating}.
  When the context is clear, we simply use the terms \emph{invariant} and \emph{separating} instead of $Y$-invariant and $Y$-separating.
  We denote the sets of separating and invariant ideals by $\Isep(B)\subset \Iinv(B) \subset \I(B)$.
\end{definition}
Our goal is to characterize the sets $\Isep(B)$ and $\Iinv(B)$.

\begin{definition} \label{d:pos-inv}
  An ideal $I\subset B$ is called \emph{positively $Y$-invariant} if $IY^{\tupi} \subset Y^{\tupi}I$ for all $i\in [n]$.
\end{definition}
From positive invariance, it automatically follows that $IY^{\tup m} \subset Y^{\tup m}I$ for all $\tup m\in \N^n$.
It is easy to see that in the rank one case, Definition \ref{d:pos-inv} coincides with Katsura's definition of positive invariance \cite[Definition 4.8]{K2007}.

\begin{proposition}
  An invariant ideal $I\in \Iinv(B)$ is positively invariant.
\end{proposition}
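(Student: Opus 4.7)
The plan is to unpack the definition of invariance and reduce positive invariance to a scalar–product condition already characterized by Lemma \ref{l:inv_via_scalar_prod}. Concretely, if $I = J^r = \omega^{-1}(J)$ for some gauge-invariant ideal $J$ of $\cNO(Y)$, I want to show that for every $b \in I$ and every $i \in [n]$ we have $b \cdot Y^{\tupi} \subset Y^{\tupi} I$. By Lemma \ref{l:inv_via_scalar_prod} applied to the correspondence $Y^{\tupi}$, this is equivalent to the condition $\langle z, b \cdot y\rangle \in I$ for all $y, z \in Y^{\tupi}$, so the task reduces to verifying this one containment.

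The key computation is straightforward from the representation identities in Definition \ref{d:rep_of_prod_sys}. Given $b \in I$ and $y, z \in Y^{\tupi}$, I would compute
\[
  \omega\bigl(\langle z, b\cdot y\rangle\bigr) = o^{\tupi}(z)^{*}\, o^{\tupi}(b\cdot y) = o^{\tupi}(z)^{*}\, \omega(b)\, o^{\tupi}(y).
\]
Because $b \in I = \omega^{-1}(J)$, we have $\omega(b) \in J$, and since $J$ is a two-sided ideal of $\cNO(Y)$ the right-hand side lies in $J$. Consequently $\omega(\langle z, b\cdot y\rangle) \in J$, which by definition of $J^r$ gives $\langle z, b\cdot y\rangle \in I$. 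Applying Lemma \ref{l:inv_via_scalar_prod} concludes the argument.

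I do not expect any real obstacle here: gauge invariance of $J$ is not even used, only that $J$ is a two-sided ideal, and strong compact alignment plays no role. The only thing to double-check is that one can apply Lemma \ref{l:inv_via_scalar_prod} with $A = B$, $X = Y^{\tupi}$, and $b \in B$ in place of $a \in A$; since $(B,Y)$ is a product system, the left action $\varphi_Y^{\tupi}\colon B \to \cL(Y^{\tupi})$ is precisely the required structure, so the lemma applies verbatim. The proof is then essentially a two-line computation plus a citation.
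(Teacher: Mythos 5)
Your proof is correct and essentially the same as the paper's: the paper passes to the quotient representation on $\cNO(Y)/J$ and notes $\sigma(\langle x, b\cdot y\rangle)=0$, while you stay in $\cNO(Y)$ and use that $J$ is a two-sided ideal, which is the same computation combined with Lemma \ref{l:inv_via_scalar_prod}. Your observation that gauge invariance of $J$ is not needed here is also accurate.
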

\begin{proof}
  Let $J \in \I^{\gamma}(\cNO(Y))$ be such that $I = J^r$.
  Consider the quotient representation $(\sigma, s) = (\quot[J]\circ\omega, \quot[J]\circ o)$ of $(B, Y)$ on $\cNO(Y)/J$.
  Observe that the kernel of $\sigma$ is $I$.
  Then, for all $i\in [n]$, $x,y\in Y^{\tupi}$, and $b \in I$, we have
  \[
    0 = s(x)^* \sigma(b) s(y) = \sigma(\langle x, b\cdot y\rangle).
  \]
  Hence, we have $\langle x, b\cdot y\rangle \in I$.
  We obtain $IY^{\tupi} \subset Y^{\tupi}I$ by Lemma \ref{l:inv_via_scalar_prod}.
  We conclude that $I$ is positively invariant.
\end{proof}

\begin{proposition}
  Let $I\subset B$ be a positively invariant ideal.
  The left action of $B$ on $Y^{\tup m}$ descends to an action of $B_I = B/I$ on $Y^{\tup m}_I$.
  This turns $Y_I = (Y_I^{\tup m})_{\tup m \in \N^n}$ into a strongly compactly aligned $\N^n$-product system over $B_I$.
\end{proposition}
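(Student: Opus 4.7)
The plan is to obtain each part of the product system structure on $Y_I$ as the descent of the corresponding structure on $Y$. First, positive invariance iterated through the multiplication maps $\mu^{\tup k, \tupi}$ yields $IY^{\tup m}\subset Y^{\tup m}I$ for every $\tup m\in \N^n$, as already observed after Definition \ref{d:pos-inv}. In particular, for $b\in I$ the operator $\varphi^{\tup m}(b)\in \cL(Y^{\tup m})$ sends $Y^{\tup m}$ into $Y^{\tup m}I = \ker\quot[I]$. Consequently, the composition $\quot[I]\circ\varphi^{\tup m}\colon B\to \cL(Y^{\tup m}_I)$ descends to a well-defined $*$-homomorphism $\varphi^{\tup m}_{Y_I}\colon B_I \to \cL(Y^{\tup m}_I)$ satisfying $\varphi^{\tup m}_{Y_I}([b]_I)([y]_I) = [b\cdot y]_I$, which is the claimed descent of the left action.

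Second, for the product system structure, I identify $Y^{\tup m}_I = Y^{\tup m}\otimes_B B_I$ as a $B_I$-$B_I$-correspondence (the left $B_I$-action being the one just constructed) and exploit associativity of the interior tensor product:
\[
  Y^{\tup m}_I\otimes_{B_I} Y^{\tup k}_I \cong Y^{\tup m}\otimes_B (B_I\otimes_{B_I} Y^{\tup k}_I) \cong Y^{\tup m}\otimes_B Y^{\tup k}_I \cong (Y^{\tup m}\otimes_B Y^{\tup k})_I.
\]
Composing with the descent $(\mu^{\tup m, \tup k})_I\colon (Y^{\tup m}\otimes_B Y^{\tup k})_I \to Y^{\tup m+\tup k}_I$ of the original multiplication yields the required isomorphism $\mu^{\tup m, \tup k}_{Y_I}$, which inherits $B_I$-bilinearity and associativity from $\mu^{\tup m, \tup k}$. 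Since $Y^{\tup 0}_I = B/I = B_I$, this makes $(B_I, Y_I)$ into an $\N^n$-product system.

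Third, for strong compact alignment, Lemma \ref{l:quot_on_compacts} identifies $\cK(Y^{\tup m}_I)$ with the image $\quot[I](\cK(Y^{\tup m}))$. Tracking elementary tensors through the identifications above shows that the square
\[
  \begin{tikzcd}
    \cK(Y^{\tup m}) \ar[r, "\iota_{\tup m}^{\tup m+\tupi}"] \ar[d, "{\quot[I]}"'] & \cK(Y^{\tup m+\tupi}) \ar[d, "{\quot[I]}"] \\
    \cL(Y^{\tup m}_I) \ar[r, "\iota_{\tup m}^{\tup m+\tupi}"'] & \cL(Y^{\tup m+\tupi}_I)
  \end{tikzcd}
\]
commutes, via the formula $\iota_{\tup m}^{\tup m+\tupi}(T)(x\cdot y) = (Tx)\cdot y$ applied to both rows. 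By strong compact alignment of $Y$, the top horizontal arrow lands in $\cK(Y^{\tup m+\tupi})$; therefore the bottom one lands in $\cK(Y^{\tup m+\tupi}_I)$, which gives strong compact alignment of $Y_I$ (and compact alignment follows from it). The main obstacle I anticipate is purely bookkeeping — verifying that the various tensor-product identifications, quotient maps, and $\iota$-maps commute as claimed on elementary tensors — rather than any substantive difficulty; the whole argument is essentially functorial descent along the $B$-$B_I$-correspondence $B_I$.
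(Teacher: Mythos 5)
Your overall route is the same as the paper's: positive invariance gives $IY^{\tup m}\subset Y^{\tup m}I$, hence the left action descends; the multiplication is the descent $\mu^{\tup m,\tup k}\otimes\id_{B_I}$ acting by $[x]_I\otimes[y]_I\mapsto[x\cdot y]_I$ (your associativity chain is a formal repackaging of the paper's observation that $Y^{\tup m}\otimes_B Y^{\tup k}_I\cong Y^{\tup m}_I\otimes_{B_I}Y^{\tup k}_I$); and the $\iota$-inclusions for $Y_I$ are obtained by lifting compacts through $\quot$ using Lemma \ref{l:quot_on_compacts}, exactly as in the paper.

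There is, however, one flawed step: the parenthetical claim that ``compact alignment follows from'' the inclusion $\iota_{\tup m}^{\tup m+\tupi}(\cK(Y_I^{\tup m}))\subset\cK(Y_I^{\tup m+\tupi})$. In this paper (following \cite{DK2021}), \emph{strongly compactly aligned} means \emph{compactly aligned} plus that extra inclusion for $i\notin\supp\tup m$; the extra inclusion alone does not yield compact alignment. One cannot even iterate it to push a compact operator from level $\tup m$ up to $\tup k\vee\tup m$: after the first step the support of the index grows to contain $i$, so the hypothesis $i\notin\supp\tup m$ fails (already for $\tup m=(0,1)$, $\tup k=(2,0)$), and in general neither $\iota_{\tup k}^{\tup k\vee\tup m}(S)$ nor $\iota_{\tup m}^{\tup k\vee\tup m}(T)$ need be compact — only their product is, and that is precisely the compact-alignment hypothesis on $Y$, which you must transfer to $Y_I$ separately. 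The fix is immediate and uses exactly your commutation technique (it is the paper's argument): given $S\in\cK(Y_I^{\tup k})$ and $T\in\cK(Y_I^{\tup m})$, choose lifts $S'\in\cK(Y^{\tup k})$, $T'\in\cK(Y^{\tup m})$ via Lemma \ref{l:quot_on_compacts}; since $\quot\colon\cL(Y^{\tup k\vee\tup m})\to\cL(Y^{\tup k\vee\tup m}_I)$ is a $*$-homomorphism commuting with both maps $\iota_{\tup k}^{\tup k\vee\tup m}$ and $\iota_{\tup m}^{\tup k\vee\tup m}$ (same elementary-tensor check as your square), one gets $S\vee T=[S'\vee T']_I\in[\cK(Y^{\tup k\vee\tup m})]_I=\cK(Y_I^{\tup k\vee\tup m})$, where $S'\vee T'$ is compact by compact alignment of $Y$. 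With this sentence added, your proof is complete and coincides with the paper's.
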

\begin{proof}
  Consider an arbitrary element $b\in I$.
  Then, $\varphi_Y^{\tup m}(b) Y^{\tup m} \subset Y^{\tup m}I$ for all $\tup m\in \N^n$.
  Hence, we have $[\varphi_Y^{\tup m}(b)]_I = 0$ by Lemma \ref{l:quot_on_compacts}.
  Therefore, $I$ is in the kernel of the map $\quot\circ \varphi_Y^{\tup m} \colon B \to \cL(Y_I^{\tup m})$, so that it descends to a map $\varphi_{Y_I}^{\tup m} \colon B_I \to \cL(Y_I^{\tup m})$.
  This proves the first claim.

  We now show that $Y_I$ is a product system.
  For any $\tup m, \tup k \in \N^n\setminus\{\tup 0\}$, where a unitary multiplication map $\mu =\mu^{\tup m, \tup k}_Y \colon Y^{\tup m} \otimes_B Y^{\tup k} \to Y^{\tup m + \tup k}$.
  The map $\mu \otimes \id_{B_I}\colon Y^{\tup m} \otimes_B Y^{\tup k}_{I} = Y^{\tup m} \otimes_B Y^{\tup k}\otimes_B B_I \to Y^{\tup m + \tup k}_I$ is also unitary.
  Moreover, we have $Y^{\tup m}I \otimes_B Y^{\tup k} = Y^{\tup m} \otimes_B I Y^{\tup k} \subset Y^{\tup m} \otimes_B Y^{\tup k}I$, so that $Y^{\tup m}I \otimes_{B} Y^{\tup k} \otimes_B B_I = 0$ and hence $Y^{\tup m} \otimes_B Y^{\tup k}_I \cong Y^{\tup m}_I \otimes_{B_I} Y^{\tup k}_I$ as $B$-$B_I$-correspondences.
  Therefore, $\mu \otimes \id_{B_I}$ defines a unitary multiplication map $\mu_{Y_I}^{\tup m, \tup k}\colon Y_I^{\tup m} \otimes_{B_I} Y_I^{\tup k} \to Y_I^{\tup m + \tup k}$.
  This map is given by $\mu_{Y_I}^{\tup m, \tup k}([x]_I \otimes [y]_I) = [\mu(x\otimes y)]_I$.
  The associativity of the multiplication is trivial from this formula and the associativity of $\mu$.

  We now use Lemma \ref{l:quot_on_compacts} to show that $Y_I$ is strongly compactly aligned.
  Let $S\in \cK(Y_I^{\tup m})$ and $T\in \cK(Y_I^{\tup k})$.
  Then, we may find $S'\in \cK(Y^{\tup m})$ and $T'\in \cK(Y^{\tup k})$ such that $S = [S']_I$ and $T = [T']_I$.
  Then, we have $S\vee T = [S'\vee T']_I \in \cK(Y_I^{\tup m \vee \tup k})$.
  This proves that $Y_I$ is compactly aligned.

  Analogously, for strong compact alignment we consider $T\in \cK(Y_I^{\tup m})$ and choose its preimage $T' \in \cK(Y^{\tup m})$.
  Then, for all $i\notin \supp\tup m$, we have $\iota_{\tup m}^{\tup m + \tupi}(T)= [\iota_{\tup m}^{\tup m + \tupi}(T')]_I \in \cK(Y_I^{\tup m + \tupi})$. We conclude that $Y_I$ is a strongly compactly aligned $\N^n$-product system over $B_I$.
\end{proof}

Consider a representation $(\sigma, s)$ of $(B_I, Y_I)$.
The pair $(\sigma \circ \quot, s \circ \quot)$ forms a representation of $(B, Y)$.
\begin{proposition} \label{p:quotient_representation}
  The mapping $(\sigma, s)\mapsto (\sigma\circ \quot, s\circ \quot)$ defines a bijection between the set of representations of $(B_I, Y_I)$ and the set of representations $(\sigma', s')$ of $(B, Y)$ with kernel of $\sigma'$ containing $I$.
  Moreover, the following statements hold:
  \begin{enumerate}
  \item The representation $(\sigma, s)$ admits a gauge action if and only if $(\sigma\circ \quot, s\circ \quot)$ does.
  \item For all $T \in \cK(Y^{\tup m})$, we have $\psi_{s\circ \quot}(T) = \psi_{s}([T]_I)$.
    Therefore, the equalities $p_{s\circ \quot}^{\tup m} = p_s^{\tup m}$, $Q_{s\circ \quot}^F = Q_s^F$ and $P_{s\circ \quot}^F = P_s^F$ hold.
  \item The representation $(\sigma, s)$ is Nica-covariant if and only if $(\sigma\circ \quot, s\circ \quot)$ is.
  \end{enumerate}
\end{proposition}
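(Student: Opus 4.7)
The plan is to produce an explicit inverse to the map $(\sigma,s)\mapsto(\sigma\circ\quot,\,s\circ\quot)$ and then verify the three supplementary claims in turn. For well-definedness of the forward map I would check the three axioms of Definition \ref{d:rep_of_prod_sys} directly: the inner-product axiom holds because $\langle[x]_I,[y]_I\rangle_{B_I}=[\langle x,y\rangle]_I$, the multiplicativity axiom holds via the formula $\mu_{Y_I}^{\tup m,\tup k}([x]_I\otimes[y]_I)=[x\cdot y]_I$ established in the preceding proposition, and the left-action axiom holds because $\varphi_{Y_I}^{\tup m}\circ\quot=\quot\circ\varphi_Y^{\tup m}$. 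The inclusion $I\subset\ker(\sigma\circ\quot)$ is immediate.

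For the inverse, given $(\sigma',s')$ with $\ker\sigma'\supset I$, I would factor $\sigma'$ through $B_I$ to obtain $\sigma$, and then use the computation $s'^{\tup m}(y\cdot b)=s'^{\tup m}(y)\sigma'(b)=0$ for $y\in Y^{\tup m}$, $b\in I$, to conclude that $s'^{\tup m}$ vanishes on $Y^{\tup m}I$ and therefore descends to a bounded map $s^{\tup m}\colon Y_I^{\tup m}\to D$, using the closedness of $Y^{\tup m}I\subset Y^{\tup m}$ already exploited in the definition of $Y_I^{\tup m}$. The three axioms for $(\sigma,s)$ then translate back from those of $(\sigma',s')$ via the compatibility relations listed above, and the two constructions are manifestly mutually inverse. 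This descent step is the only point of real delicacy; everything else is routine verification.

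For the three supplementary claims I would argue as follows. Claim (1) is transparent once one observes that $\sigma(B_I)=\sigma'(B)$ and $s^{\tup m}(Y_I^{\tup m})=s'^{\tup m}(Y^{\tup m})$, so any candidate gauge action for one representation automatically serves as a gauge action for the other. For claim (2), I would first verify $\psi_{s\circ\quot}^{\tup m}(\theta_{x,y})=s^{\tup m}([x]_I)s^{\tup m}([y]_I)^*=\psi_s^{\tup m}(\theta_{[x]_I,[y]_I})=\psi_s^{\tup m}([\theta_{x,y}]_I)$ on rank-one operators and extend by linearity and continuity. Lemma \ref{l:quot_on_compacts} ensures that the net $([k_\lambda^{\tup m}]_I)_{\lambda\in\Lambda}$ is an approximate identity for $\cK(Y_I^{\tup m})$, so passing to the weak-$*$ limit in the enveloping von Neumann algebra yields $p_{s\circ\quot}^{\tup m}=p_s^{\tup m}$; the identities for $Q_s^F$ and $P_s^F$ follow from their definitions as polynomials in the $p_s^{\tupi}$. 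Claim (3) is then a direct consequence of Definition \ref{d:Nica_covariance}.
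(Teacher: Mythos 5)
Your proposal is correct and follows essentially the same route as the paper's proof: descend $(\sigma',s')$ to $(B_I,Y_I)$ using $s'^{\tup m}(y\cdot b)=s'^{\tup m}(y)\sigma'(b)=0$ for $b\in I$ (the step the paper dismisses as a routine check), prove claim (2) on rank-one operators and pass to the image of an approximate identity, and obtain (1) and (3) immediately. You merely spell out the descent and axiom verifications in more detail than the paper does.
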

\begin{proof}
  Obviously, the kernel of $\pi\circ \quot$ contains $I$.
  Conversely, let $(\sigma', s')$ be a representation of $(B, Y)$ with kernel containing $I$.
  Then, $\sigma'$ and $s'$ descend to maps $\sigma'_I$ and $s'_I$ on $B_I$ and $Y_I$, respectively.
  It is routine check that the resulting pair $(\sigma'_I, s'_I)$ is a representation of $(B_I, Y_I)$.

  The first statement is trivial.
  It is enough to prove the second statement for a rank-one operator $T = \theta_{x,y}$, where $x, y \in Y^{\tup m}$.
  We have $\psi_{\sigma\circ \quot}(T) = s([x]_I)s([y]_I)^* = \psi_{s}(\theta_{[x]_I, [y]_I}) = \psi_{s}([T]_I)$.
  If $(k^{\tup m}_{\lambda})_{\lambda \in \Lambda}$ is a c.a.i. for $Y^{\tup m}$, then $([k^{\tup m}_{\lambda}]_I)_{\lambda \in \Lambda}$ is a c.a.i. for $Y_I^{\tup m}$.
  Hence, $p_{s\circ \quot}^{\tup m} = p_s^{\tup m}$ for all $m\in \N^n$ and the equalities for $Q$ and $P$ follow from the definitions.
  The third statement follows immediately from the second one.
\end{proof}

We use $(\omega_{Y_I}, o_{Y_I})$ to denote the universal CNP-representation of $(B_I, Y_I)$ on $\cNO(Y_I)$.
\begin{lemma} \label{l:invariant_if_cnp}
  A positively invariant ideal $I\subset B$ is invariant if and only if $(\omega_{Y_I}\circ \quot, o_{Y_I}\circ \quot)$ is a CNP-representation of $(B, Y)$.
\end{lemma}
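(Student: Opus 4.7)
The lemma is a biconditional, so I would handle the two directions separately; the heart of the argument lies in the ``only if'' direction, where the co-universal property of $\cNO(Y_I)$ is the right tool.

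For the ``if'' direction, suppose $(\omega_{Y_I}\circ \quot, o_{Y_I}\circ \quot)$ is CNP for $(B, Y)$. The universal property of $\cNO(Y)$ produces a $*$-homomorphism $\Phi\colon \cNO(Y) \to \cNO(Y_I)$ with $\Phi\circ(\omega, o) = (\omega_{Y_I}\circ \quot, o_{Y_I}\circ \quot)$. Because this target representation admits the natural gauge action of $\cNO(Y_I)$, $\Phi$ intertwines the two gauge actions, so $J \coloneqq \ker\Phi$ is a gauge-invariant ideal of $\cNO(Y)$. I compute $J^r = \omega^{-1}(J) = \ker(\omega_{Y_I}\circ \quot)$, and since $\omega_{Y_I}$ is faithful (the standard fact that the base algebra embeds into the universal CNP-algebra, provable by integrating a CNP Fock-type representation and invoking Proposition~\ref{p:giut}), this equals $\ker\quot = I$. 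Thus $I$ is invariant.

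For the ``only if'' direction, write $I = J^r$ for a gauge-invariant $J \subset \cNO(Y)$ and form the quotient representation $(\bar\omega, \bar o) \coloneqq (\quot[J]\circ \omega, \quot[J]\circ o)$ of $(B, Y)$ on $\cNO(Y)/J$. This is CNP (a composition of the universal CNP-representation with a $*$-homomorphism, using Lemma~\ref{l:prop_of_projs}\ref{l:prop_of_projs:composition} to move the $Q^F$-projections through $\quot[J]$) and admits a gauge action since $J$ is gauge-invariant. Since $\ker\bar\omega = I$, Proposition~\ref{p:quotient_representation} descends it to a Nica-covariant representation $(\bar\omega_I, \bar o_I)$ of $(B_I, Y_I)$ on $\cNO(Y)/J$ with $\bar\omega_I$ faithful, admitting a gauge action, and with $\bar\omega_I \times_0 \bar o_I\colon \cNT(Y_I) \to \cNO(Y)/J$ surjective (because its image contains $\bar\omega(B)$ and every $\bar o(Y^{\tup m})$, which together generate $\cNO(Y)/J$). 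The co-universal property (Proposition~\ref{p:co-universal}) then yields a surjective $\Omega\colon \cNO(Y)/J \to \cNO(Y_I)$ with $\Omega\circ(\bar\omega_I, \bar o_I) = (\omega_{Y_I}, o_{Y_I})$. Precomposing with $\quot$ and using $\bar\omega = \bar\omega_I\circ\quot$, $\bar o = \bar o_I\circ\quot$, I obtain $(\omega_{Y_I}\circ \quot, o_{Y_I}\circ \quot) = \Omega\circ(\bar\omega, \bar o)$; since $(\bar\omega, \bar o)$ is CNP for $(B, Y)$ and CNP is preserved under composition with $*$-homomorphisms, the composition is also CNP for $(B, Y)$.

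The main obstacle, and the place where the deeper theory enters, is verifying the hypotheses of the co-universal property in the ``only if'' direction: one needs $(\bar\omega_I, \bar o_I)$ to be faithful on the base algebra and to generate all of $\cNO(Y)/J$ under Nica-Toeplitz integration. The first point is immediate from the identification $\ker\bar\omega = I$, and the second is a straightforward unfolding of the quotient; but routing everything through Proposition~\ref{p:quotient_representation} and keeping track of gauge-equivariance requires care. Once these items are in place, the co-universal map $\Omega$ delivers the CNP-condition downstairs for free.
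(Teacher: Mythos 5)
Your proposal is correct and follows essentially the same route as the paper: for the ``only if'' direction you descend the quotient representation on $\cNO(Y)/J$ via Proposition~\ref{p:quotient_representation} and apply the co-universal property (Proposition~\ref{p:co-universal}) to factor $(\omega_{Y_I}\circ \quot, o_{Y_I}\circ \quot)$ through the CNP-representation $(\omega, o)$, and for the ``if'' direction you take the gauge-invariant kernel of the induced map $\cNO(Y)\to\cNO(Y_I)$ and identify its restriction with $I$. You merely make explicit two points the paper leaves implicit (faithfulness of $\omega_{Y_I}$ on $B_I$ and surjectivity of the integrated form needed for co-universality), both of which are justified as you indicate.
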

\begin{proof}
  Suppose that $I$ is invariant.
  Then, there exists a gauge-invariant ideal $J\in \I^{\gamma}(\cNO(Y))$ such that $I = J^r$.
  By Proposition \ref{p:quotient_representation}, the quotient representation $(\sigma, s)$ of $(B, Y)$ on $\cNO(Y)/J$ descends to a faithful representation $(\sigma_I, s_I)$ of $(B_I, Y_I)$ on $\cNO(Y)/J$.
  By the co-universal property (Proposition \ref{p:co-universal}) of $\cNO(Y_I)$, this representation defines a canonical epimorphism $\cNO(Y)/J \to \cNO(Y_I)$.
  Together with the quotient map $\cNO(Y) \to \cNO(Y)/J$, this induces a factorization of $(\omega_{Y_I}\circ \quot, o_{Y_I}\circ \quot)$ through $(\omega, o)$. Since $(\omega, o)$ is a CNP-representation, we conclude that  $(\omega_{Y_I}\circ \quot, o_{Y_I}\circ \quot)$ is also a CNP-representation of $(B, Y)$.
  \[
    \begin{tikzcd}
      {(B, Y)} \arrow[rd, "\quot"'] \arrow[rrr, "{(\omega, o)}", Rightarrow] \arrow[rrrd, "{(\sigma, s)}", Rightarrow] \arrow[rrrdd, "{(\omega_{Y_I}\circ \quot, o_{Y_I}\circ \quot)}"', Rightarrow, bend right=49] &                                                                                                               &  & \cNO(Y) \arrow[d, two heads]           \\
      & {(B_I, Y_I)} \arrow[rrd, "{(\omega_{Y_I}, o_{Y_I})}"', Rightarrow] \arrow[rr, "{(\sigma_I, s_I)}"', Rightarrow] &  & \cNO(Y)/J \arrow[d, two heads, dashed] \\
      &                                                                                                               &  & \cNO(Y_I)
    \end{tikzcd}\]

  Conversely, suppose that $(\omega_{Y_I}\circ \quot, o_{Y_I}\circ \quot)$ is a CNP-representation of $(B, Y)$.
  Since it admits a gauge action, the representation induces a gauge-invariant homomorphism $\cNO(Y) \to \cNO(Y_I)$.
  Then, the kernel $J$ of this epimorphism is a gauge-invariant ideal of $B$ such that $I = J^r$.
  Hence, $I$ is invariant.
\end{proof}

We define ideals
\[
  L^i_I = (Y^{\tupi})^{-1}(I) = \{ b\in B \colon bY^{\tupi} \subset Y^{\tupi}I\},
\]
and $L^F_I = \bigcap_{i\in F} L^i_I$.

\begin{definition} \label{d:neg-inv}
  An ideal $I\subset B$ is called \emph{negatively invariant} if $L^F_I\cap \cI^F_Y \subset I$ for all $F\in \cF$.
\end{definition}
Again, in the rank one case, Definition \ref{d:neg-inv} transforms to just $Y^{-1}(I)\cap \cI^{\{1\}}_Y \subset I$, which is the definition of a negatively invariant ideal in \cite[Definition 4.8]{K2007}.

\begin{lemma} \label{l:preimage_of_cnp_ideals}
  Let $\cJ^F_{Y_I}$ and $\cI^F_{Y_I}$ be the (pre)-CNP-ideals defined in \eqref{e:def_of_j} and \eqref{e:def_of_i} corresponding to $(B_I, Y_I)$.
  Then,
  \begin{align*}
    \quot^{-1}(\cJ_{Y_I}^F) & = \{ b\in B \colon [\varphi^{\tupi}_Y(b)]_I\in \cK(Y^{\tupi}_I) \text{ for all } i\in [n]\, \text{ and } bL^F_I \subset I\},              \\
    \shortintertext{and}
    \quot^{-1}(\cI_{Y_I}^F) & = \{ b\in \quot^{-1}(\cJ_{Y_I}^F) \colon bY^{\tup m} \subset Y^{\tup m}(\quot^{-1}(\cJ_{Y_I}^F)) \text{ for all } \tup m\perp \tup 1_F\}.
  \end{align*}
  Consequently, $[\cI_{Y}^F]_I\subset \cI_{Y_I}^F$ for all $F\in \cF$ if and only if $I$ is negatively invariant.
\end{lemma}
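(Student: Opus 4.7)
The proof splits into the two preimage formulas and the equivalence. All three parts rely on translating definitions through the quotient map $\quot$, using Lemma \ref{l:quot_on_compacts} to transport compactness, Lemma \ref{l:inv_via_scalar_prod} to convert left-action conditions into scalar-product conditions, and Lemma \ref{l:inv_composition} to compose the preimages $(Y^{\tup m})^{-1}$.

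For $\quot^{-1}(\cJ^F_{Y_I})$ I unpack the definition of $\cJ^F_{Y_I}$. The compactness clause $\varphi^{\tupi}_{Y_I}([b]_I)\in \cK(Y^{\tupi}_I)$ is by construction the same as $[\varphi^{\tupi}_Y(b)]_I \in \cK(Y^{\tupi}_I)$. For the perp clause, the key observation is that $[\varphi^{\tupi}_Y(b')]_I=0$ in $\cL(Y^{\tupi}_I)$ iff $b'Y^{\tupi}\subset Y^{\tupi}I$, so $\ker\varphi^{\tupi}_{Y_I}=L^i_I/I$ and $\bigcap_{i\in F}\ker\varphi^{\tupi}_{Y_I}=L^F_I/I$. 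The annihilator condition $[b]_I\in (L^F_I/I)^\perp$ then translates directly to $bL^F_I\subset I$.

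For $\quot^{-1}(\cI^F_{Y_I})$ I append the defining condition $[b]_I\in (Y^{\tup m}_I)^{-1}(\cJ^F_{Y_I})$ for $\tup m\perp \tup 1_F$. Lemma \ref{l:inv_via_scalar_prod} applied in $(B_I,Y_I)$ rewrites it as $[\langle x,by\rangle]_I\in \cJ^F_{Y_I}$, i.e.\ $\langle x,by\rangle\in \quot^{-1}(\cJ^F_{Y_I})$ for all $x,y\in Y^{\tup m}$; a second application of Lemma \ref{l:inv_via_scalar_prod} in $(B,Y)$ converts this into $bY^{\tup m}\subset Y^{\tup m}\quot^{-1}(\cJ^F_{Y_I})$, which is the claimed description.

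The equivalence with negative invariance is the final and most delicate task. The forward direction is immediate: if $b\in L^F_I\cap \cI^F_Y$, then $[b]_I$ lies in $L^F_I/I$ and, by hypothesis, also in $\cI^F_{Y_I}\subset \cJ^F_{Y_I}\subset (L^F_I/I)^\perp$, forcing $[b]_I=0$ and $b\in I$. The main obstacle is the reverse direction. Given $b\in \cI^F_Y$, the compactness part of $b\in\quot^{-1}(\cJ^F_{Y_I})$ follows from $b\in \cJ^F_Y$ and Lemma \ref{l:quot_on_compacts}, and the annihilator part $bL^F_I\subset I$ follows from $\cI^F_Y\cdot L^F_I\subset \cI^F_Y\cap L^F_I\subset I$ by negative invariance. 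The sticking point is the remaining condition $bY^{\tup m}\subset Y^{\tup m}\quot^{-1}(\cJ^F_{Y_I})$ for $\tup m\perp\tup 1_F$, which I plan to establish via the stronger inclusion $bY^{\tup m}\subset Y^{\tup m}\cI^F_Y$. Using Lemma \ref{l:inv_composition}, one has
\[
  (Y^{\tup m})^{-1}(\cI^F_Y) = \bigcap_{\tup k\perp\tup 1_F}(Y^{\tup m+\tup k})^{-1}(\cJ^F_Y),
\]
and since $\tup m+\tup k\perp \tup 1_F$ each factor on the right contains $\cI^F_Y$, so $b$ lies in every factor. Then for $d\in L^F_I$, associativity gives $Y^{\tup m}\cI^F_Y\cdot d\subset Y^{\tup m}(\cI^F_Y\cap L^F_I)\subset Y^{\tup m}I$, so $\langle y,by'\rangle d = \langle y,by'd\rangle\in I$ for all $y,y'\in Y^{\tup m}$, which is exactly the needed condition.
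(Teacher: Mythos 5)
Your proof is correct and follows essentially the same route as the paper: translate the defining conditions of $\cJ^F_{Y_I}$ and $\cI^F_{Y_I}$ through the quotient map using $\ker\varphi^{\tupi}_{Y_I}=[L^i_I]_I$, and reduce the negative-invariance equivalence to the annihilator condition $bL^F_I\subset I$ by noting that elements of $\cI^F_Y$ automatically satisfy the remaining clauses. The only substantive difference is that you spell out, via Lemma \ref{l:inv_composition}, the inclusion $\cI^F_Y Y^{\tup m}\subset Y^{\tup m}\cI^F_Y$ for $\tup m\perp\tup 1_F$, which the paper merely asserts as an observation.
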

\begin{proof}
  Firstly, observe that $[L^i_I]_I = \ker \varphi^{\tupi}_{Y_I}$.
  Indeed, $[b]_I$ is in the kernel if and only if $bY^{\tupi} \subset Y^{\tupi}I$.
  Analogously, $[b]_I \perp \bigcap_{i\in F} \ker \varphi^{\tupi}_{Y_I}$ if and only if $bL^F_I \subset I$.
  The expression for $\quot^{-1}(\cJ_{Y_I}^F)$ then follows from the definition of $\cJ_{Y_I}^F$.

  An element $b\in \quot^{-1}(\cJ_{Y_I}^F)$ is in $\quot^{-1}(\cI_{Y_I}^F)$ if and only if $[b]_I Y^{\tup m}_I \subset Y^{\tup m}_I \cJ_{Y_I}^F$ for all $\tup m\perp \tup1_F$.
  This is equivalent to $bY^{\tup m} \subset Y^{\tup m}(\quot^{-1}(\cJ_{Y_I}^F) + I) = Y^{\tup m}(\quot^{-1}(\cJ_{Y_I}^F))$.
  The last equality follows from the fact that $I = \quot^{-1}(0) \subset \quot^{-1}(\cJ_{Y_I}^F)$.
  This proves the expression for the preimage of $\cI_{Y_I}^F$.

  Now, consider an arbitrary $b\in \cI_Y^F$.
  Then, the condition $[\varphi^{\tupi}_Y(b)]_I\in \cK(Y^{\tupi}_I)$ is always satisfied.
  Observe that since $\cI_Y^F Y^{\tup m} \subset Y^{\tup m}\cI_Y^F$ for all $\tup m \perp \tup1_F$, we have $[\cI_{Y}^F]_I\subset \cI_{Y_I}^F$ if and only if $[\cI_{Y}^F]_I\subset \cJ_{Y_I}^F$.
  Therefore, $[\cI_{Y}^F]_I\subset \cI_{Y_I}^F$ if and only if $bL^F_I \subset I$ for all $b\in \cI_Y^F$ or, equivalently, $\cI^F_Y L^F_I = L^F_I\cap \cI^F_Y\subset I$, which is exactly the definition of negative invariance.
  This proves the second statement.
\end{proof}

\begin{theorem} \label{t:invariant_if_pos_and_neg}
  An ideal $I\subset B$ is invariant if and only if it is positively invariant and negatively invariant.
\end{theorem}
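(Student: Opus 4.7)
The plan is to bolt together two technical results from the preceding material: Lemma \ref{l:invariant_if_cnp}, which characterizes invariance in terms of a certain pullback being a CNP-representation, and Lemma \ref{l:preimage_of_cnp_ideals}, which characterizes negative invariance in terms of the inclusion $[\cI^F_Y]_I\subset \cI^F_{Y_I}$. The forward direction of the theorem is partially known: the proposition right after Definition \ref{d:pos-inv} already says invariance implies positive invariance, so throughout I may assume $I$ is positively invariant and the product system $(B_I, Y_I)$ is well-defined.

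Given this, by Lemma \ref{l:invariant_if_cnp} it remains to show the equivalence
\[
  (\omega_{Y_I}\circ \quot,\, o_{Y_I}\circ \quot)\text{ is a CNP-representation of }(B, Y) \iff I \text{ is negatively invariant}.
\]
To handle the left-hand side, I would first apply Proposition \ref{p:quotient_representation}, which gives $Q^F_{o_{Y_I}\circ \quot}=Q^F_{o_{Y_I}}$, so the CNP-condition for the pullback becomes $\omega_{Y_I}([a]_I)Q^F_{o_{Y_I}}=0$ for every $F\in \cF$ and $a\in \cI^F_Y$. The key observation is then that this equality holds for all such $a$ if and only if $[\cI^F_Y]_I\subset \cI^F_{Y_I}$ for all $F\in \cF$. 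The ``if'' direction follows immediately from the fact that $(\omega_{Y_I}, o_{Y_I})$ is itself a CNP-representation. For the ``only if'' direction, I would use Lemma \ref{l:cIF_is_max} applied to the representation $(\omega_{Y_I}, o_{Y_I})$: since $\cC_{\cI_{Y_I}}$ is maximal gauge-invariant with trivial intersection with $B_I$, the map $\omega_{Y_I}$ is injective, and Lemma \ref{l:quot_on_compacts} ensures $[a]_I\in \bigcap_i(\varphi^{\tupi}_{Y_I})^{-1}(\cK(Y^{\tupi}_I))$ whenever $a\in \cI^F_Y\subset \bigcap_i(\varphi^{\tupi}_Y)^{-1}(\cK(Y^{\tupi}))$, so the hypotheses of Lemma \ref{l:cIF_is_max} are met and we conclude $[a]_I\in \cI^F_{Y_I}$.

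Finally, by Lemma \ref{l:preimage_of_cnp_ideals}, the inclusion $[\cI^F_Y]_I\subset \cI^F_{Y_I}$ for all $F\in \cF$ is exactly negative invariance of $I$. Chaining the equivalences, $I$ is invariant if and only if it is positively invariant and negatively invariant, as claimed.

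The main obstacle I anticipate is the bookkeeping around injectivity of $\omega_{Y_I}$: one must ensure that Lemma \ref{l:cIF_is_max} applies to the universal CNP-representation, which uses the (standard but nontrivial) fact that the CNP quotient does not collapse the base algebra. The rest of the argument is a formal chase through Proposition \ref{p:quotient_representation} and the two technical lemmas, with no further calculation required.
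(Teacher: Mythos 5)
Your proposal is correct and follows essentially the same route as the paper: both directions reduce via Lemma \ref{l:invariant_if_cnp} and Proposition \ref{p:quotient_representation} to the inclusion $[\cI^F_Y]_I\subset \cI^F_{Y_I}$, with Lemma \ref{l:cIF_is_max} giving the forward implication and Lemma \ref{l:preimage_of_cnp_ideals} translating this into negative invariance. Your extra care about the hypotheses of Lemma \ref{l:cIF_is_max} (injectivity of $\omega_{Y_I}$ and compactness of the quotient left action via Lemma \ref{l:quot_on_compacts}) is only a slightly more explicit version of what the paper uses tacitly.
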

\begin{proof}
  Suppose that $I$ is invariant.
  Then, $I$ is positively invariant by Proposition \ref{p:quotient_representation} and $(\omega_{Y_I}\circ \quot, o_{Y_I}\circ \quot)$ is a CNP-representation of $(B, Y)$ by Lemma \ref{l:invariant_if_cnp}.
  To prove that $I$ is negatively invariant, by Lemma \ref{l:preimage_of_cnp_ideals}, it suffices to show that $[\cI_{Y}^F]_I\subset \cI_{Y_I}^F$ for all $F\in \cF$.

  Let $b\in \cI_{Y}^F$ be arbitrary.
  Since $(\omega_{Y_I}\circ \quot, o_{Y_I}\circ \quot)$ is a CNP-representation, we have $\omega_{Y_i}([b]_I)Q^F_{o_{Y_I}\circ \quot} = 0$.
  By Proposition \ref{p:quotient_representation}, we have $Q^F_{o_{Y_I}\circ \quot} = Q^F_{o_{Y_I}}$.
  Hence, we have $\omega_{Y_i}([b]_I)Q^F_{o_{Y_I}} = 0$.
  Since $b\in \cI_{Y}^F\subset \bigcap_{i = 1}^n (\varphi^{\tupi})^{-1}(\cK(X^{\tupi}))$ by definition, we can apply Lemma \ref{l:cIF_is_max} and get $[b]_I\in \cI_{Y_I}^F$.
  We conclude that $[\cI_{Y}^F]_I\subset \cI_{Y_I}^F$ and thus $I$ is negatively invariant.

  Conversely, suppose that $I$ is positively and negatively invariant.
  Then, by Lemma \ref{l:preimage_of_cnp_ideals}, $[\cI_{Y}^F]_I\subset \cI_{Y_I}^F$ for all $F\in \cF$.
  This implies that any $b\in \cI_{Y}^F$ satisfies the equation
  \[
    \omega_{Y_I}([b]_I)Q^F_{o_{Y_I}} = \omega_{Y_I}([b]_I)Q^F_{o_{Y_I}\circ \quot} = 0.
  \]
  Hence, $(\omega_{Y_I}\circ \quot, o_{Y_I}\circ \quot)$ is a CNP-representation of $(B, Y)$.
  By Lemma \ref{l:invariant_if_cnp}, $I$ is invariant.
\end{proof}

\begin{proposition} \label{p:separating_criterion}
  Let $I\subset B$ be a positively invariant ideal. Suppose that $[\cI^F_Y]_I = \cI^F_{Y_I}$ for all $F\in \cF$. Then, $I$ is separating.
\end{proposition}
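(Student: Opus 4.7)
The plan is first to verify that $I$ is itself invariant, and then to show that any gauge-invariant $J\in \I^\gamma(\cNO(Y))$ with $J^r = I$ is forced to equal a single canonically defined ideal, thereby witnessing uniqueness. Since $[\cI^F_Y]_I = \cI^F_{Y_I}$ implies in particular the inclusion $[\cI^F_Y]_I \subset \cI^F_{Y_I}$, Lemma \ref{l:preimage_of_cnp_ideals} makes $I$ negatively invariant, and combining this with the hypothesis of positive invariance gives invariance via Theorem \ref{t:invariant_if_pos_and_neg}. By the proof of Lemma \ref{l:invariant_if_cnp}, the pair $(\omega_{Y_I}\circ \quot[I], o_{Y_I}\circ \quot[I])$ is then a CNP-representation of $(B, Y)$ and so induces a canonical gauge-equivariant epimorphism $\Phi \colon \cNO(Y) \twoheadrightarrow \cNO(Y_I)$ with $(\ker \Phi)^r = I$.

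For the uniqueness step, fix an arbitrary $J\in \I^\gamma(\cNO(Y))$ with $J^r = I$ and consider the quotient representation $(\sigma, s) = (\quot[J]\circ \omega, \quot[J]\circ o)$ of $(B, Y)$ on $\cNO(Y)/J$. Because $\ker \sigma = J^r = I$, Proposition \ref{p:quotient_representation} descends $(\sigma, s)$ to a Nica-covariant representation $(\sigma_I, s_I)$ of $(B_I, Y_I)$ with $\sigma_I$ faithful, and $J$ being gauge-invariant ensures that $(\sigma_I, s_I)$ admits a gauge action. The crucial remaining point is that $(\sigma_I, s_I)$ is a CNP-representation: given $c\in \cI^F_{Y_I}$, the hypothesis $[\cI^F_Y]_I = \cI^F_{Y_I}$ lets us choose $b\in \cI^F_Y$ with $[b]_I = c$; the CNP property of $(\omega, o)$ yields $\omega(b) Q^F_o = 0$, and applying $\quot[J]$ combined with Lemma \ref{l:prop_of_projs}.\ref{l:prop_of_projs:composition} and Proposition \ref{p:quotient_representation} gives $\sigma_I(c) Q^F_{s_I} = 0$.

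By the gauge-invariant uniqueness theorem (Proposition \ref{p:giut}), the induced homomorphism $\sigma_I \times s_I \colon \cNO(Y_I) \to \cNO(Y)/J$ is injective. Tracing through the universal properties of $\cNO(Y_I)$ and $\cNO(Y)$, the composition $(\sigma_I \times s_I)\circ \Phi$ agrees with $\quot[J]$ on the images of $\omega$ and $o$, hence on all of $\cNO(Y)$. Injectivity of $\sigma_I\times s_I$ therefore yields $J = \ker \quot[J] = \ker \Phi$, and since this conclusion is independent of the chosen $J$, the ideal $I$ is separating.

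I expect the main obstacle to be Step 3: recognizing that the full equality $[\cI^F_Y]_I = \cI^F_{Y_I}$ (and not merely the inclusion used to deduce invariance) is exactly what is needed to transfer the CNP property from $(\omega, o)$ down to $(\sigma_I, s_I)$, which is the hypothesis that activates the gauge-invariant uniqueness theorem and collapses all candidates for $J$ onto $\ker \Phi$.
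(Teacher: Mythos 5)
Your proof is correct and follows essentially the same route as the paper: the key step in both is to use the equality $[\cI^F_Y]_I = \cI^F_{Y_I}$ to lift elements of $\cI^F_{Y_I}$ to $\cI^F_Y$, deduce that the induced representation of $(B_I, Y_I)$ on the quotient $\cNO(Y)/J$ is a CNP-representation, and then invoke the gauge-invariant uniqueness theorem. The only organizational difference is that the paper anchors all candidate ideals $J$ to $J_{\min}$, the intersection of all ideals restricting to $I$, whereas you anchor them to $\ker\Phi$ coming from Lemma \ref{l:invariant_if_cnp} after first verifying that $I$ is invariant.
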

\begin{proof}
  Let $J_{\min}$ be the intersection of all ideals $J\in \I^{\gamma}(\cNO(Y))$ such that $I = J^r$.
  Then, $J_{\min}$ is gauge-invariant, $J_{\min}^r = I$ and $J_{\min}$ is the minimal ideal with these properties.
  Let $F\in \cF$ and $a \in \cI_{Y_I}^F$ be an arbitrary element.
  By assumption, there exists $\hat a \in \cI_Y^F$ such that $[\hat a]_I = a$.
  Therefore, we have
  \[
    0 = [\omega_Y(\hat a) Q_{o_Y}^F]_{J_{\min}} = \omega'(a) Q_{o'}^F \in \cNO(Y)/J_{\min},
  \]
  where $(\omega', o')$ is the representation of $(B_I, Y_I)$ on $\cNO(Y)/J_{\min}$ induced by $(\omega_Y, o_Y)$.
  Since $F$ and $a$ were arbitrary, we conclude that $(\omega', o')$ is a CNP-representation of $(B_I, Y_I)$.
  Since it is injective and gauge-invariant, it induces an isomorphism $\cNO(Y)/J_{\min} \cong \cNO(B_I)$.

  Suppose that $J\in \I^{\gamma}(\cNO(Y))$ is some other ideal such that $I = J^r$.
  Then, the representation of $(B_I, Y_I)$ on $\cNO(Y)/J\cong (\cNO(Y)/J_{\min})/(J/J_{\min})$ is also injective and gauge-invariant.
  By the GIUT, we conclude that $J/J_{\min} = 0$ or $J = J_{\min}$.
  Hence, $I$ is separating.
\end{proof}

\begin{corollary}
  Let $(B, Y)$ be a regular (proper and injective) $\N^n$-product system. Then, any invariant ideal $I\subset B$ is separating. Consequently, $I \mapsto I^i$ is a bijection $\Iinv(B) \to \I^{\gamma}(\cNO(Y))$.
\end{corollary}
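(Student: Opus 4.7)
The plan is to reduce the statement to Proposition \ref{p:separating_criterion} by showing that, for a regular $(B, Y)$ and any invariant ideal $I \subset B$, the quotient product system $(B_I, Y_I)$ is again regular, and that both $\cI_Y^F$ and $\cI_{Y_I}^F$ are as large as possible. Once separation is established, the bijection statement is formal: the restriction map $J \mapsto J^r$ already sends $\I^\gamma(\cNO(Y))$ into $\Iinv(B)$ by definition, and separation combined with the evident identity $(I^i)^r = I$ for $I$ invariant makes $I \mapsto I^i$ its two-sided inverse.

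First I would compute the CNP-ideals for a regular system. When $Y$ is injective, $\ker \varphi^{\tupi} = 0$, so $(\bigcap_{i\in F} \ker \varphi^{\tupi})^\perp = B$ for any nonempty $F$; when $Y$ is proper, $(\varphi^{\tupi})^{-1}(\cK(Y^{\tupi})) = B$. Hence $\cJ_Y^F = B$, and then $\cI_Y^F = B$, for all $F \neq \emptyset$. The case $F = \emptyset$ is trivial since $\cJ_Y^\emptyset = B^\perp = 0$ forces $\cI_Y^\emptyset = 0 = \cI_{Y_I}^\emptyset$, so $[\cI_Y^\emptyset]_I = \cI_{Y_I}^\emptyset$ automatically.

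The main step is to verify that $Y_I$ is regular. Properness passes to the quotient by Lemma \ref{l:quot_on_compacts} applied fibrewise: $\varphi^{\tupi}_{Y_I}([b]_I) = [\varphi^{\tupi}_Y(b)]_I \in [\cK(Y^{\tupi})]_I = \cK(Y^{\tupi}_I)$. For injectivity, recall from Lemma \ref{l:preimage_of_cnp_ideals} that $\ker \varphi^{\tupi}_{Y_I} = [L^i_I]_I$. By Theorem \ref{t:invariant_if_pos_and_neg}, $I$ is both positively and negatively invariant; positive invariance gives $I \subset L^i_I$, while negative invariance applied to $F = \{i\}$, together with $\cI^{\{i\}}_Y = B$, gives $L^i_I = L^i_I \cap \cI^{\{i\}}_Y \subset I$. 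Hence $L^i_I = I$ and $[L^i_I]_I = 0$, so $\varphi^{\tupi}_{Y_I}$ is injective. Thus $(B_I, Y_I)$ is regular, and by the same computation as in the second paragraph, $\cI_{Y_I}^F = B_I$ for all $F \neq \emptyset$.

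Combining these gives $[\cI_Y^F]_I = [B]_I = B_I = \cI_{Y_I}^F$ for every $F \in \cF$, so Proposition \ref{p:separating_criterion} yields that $I$ is separating. I expect the whole argument to be short once the observation $\cI^F_Y = B$ is in place; the subtlest point is checking injectivity of $\varphi^{\tupi}_{Y_I}$, where both invariance conditions are used simultaneously to force $L^i_I = I$. For the bijection, separating means that for each $I \in \Iinv(B)$ there is a unique $J \in \I^\gamma(\cNO(Y))$ with $J^r = I$; since $I^i \subset J$ for any such $J$ and $(I^i)^r \supset I$ trivially, one has $J = I^i$, so $I \mapsto I^i$ is a bijection with inverse the restriction map.
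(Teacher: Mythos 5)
Your argument is correct, and it reaches the paper's conclusion through the same endpoint, Proposition \ref{p:separating_criterion}, but by a somewhat different middle step. The paper's proof is shorter: it only notes that $\cI^F_Y=B$ (regularity), that an invariant $I$ is negatively invariant, and then quotes the ``Consequently'' part of Lemma \ref{l:preimage_of_cnp_ideals} to get $B_I=[\cI^F_Y]_I\subset \cI^F_{Y_I}$, hence equality, and applies Proposition \ref{p:separating_criterion}. You instead prove that the quotient system $(B_I,Y_I)$ is itself regular --- properness via Lemma \ref{l:quot_on_compacts}, and injectivity via $L^i_I=I$, which you correctly extract from positive invariance ($I\subset L^i_I$) together with negative invariance at $F=\{i\}$ and $\cI^{\{i\}}_Y=B$ --- and then recompute $\cI^F_{Y_I}=B_I$ from the definitions. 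This is valid; the only citation quibble is that the identity $\ker\varphi^{\tupi}_{Y_I}=[L^i_I]_I$ is the first line of the \emph{proof} of Lemma \ref{l:preimage_of_cnp_ideals} rather than its statement, so you should either verify it (it is one line: $[b]_I\in\ker\varphi^{\tupi}_{Y_I}$ iff $bY^{\tupi}\subset Y^{\tupi}I$) or cite it as such. What your longer route buys is extra information: the quotient of a regular system by an invariant ideal is again regular, and the identity $L^i_I=I$ recovers the Remark following the corollary; your explicit treatment of $F=\emptyset$ (where $\cI^\emptyset_Y=0=\cI^\emptyset_{Y_I}$) is also more careful than the paper's blanket claim that $\cI^F_Y=B$ for all $F\in\cF$. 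The concluding bijection argument --- uniqueness from separation, $I^i$ gauge-invariant, $I\subset (I^i)^r\subset J^r=I$, hence $J=I^i$ --- is the standard ``straightforward'' step the paper omits, and it is fine.
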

\begin{proof}
  Since $Y$ is regular, we have $\cI^F_Y = B$ for all $F\in \cF$.
  Let $I\subset B$ be an invariant ideal.
  Then, $I$ is negatively invariant and by Lemma \ref{l:preimage_of_cnp_ideals}, $B_I = [\cI^F_Y]_I \subset \cI^F_{Y_I}$ for all $F\in \cF$.
  Hence, we have $[\cI^F_Y]_I = \cI^F_{Y_I}$ for all $F\in \cF$ and by Proposition \ref{p:separating_criterion}, $I$ is separating.
  The rest of the proof is straightforward.
\end{proof}
\begin{remark}
  In case of regular $\N^n$-product systems, the set of invariant ideals has a particularly nice description: an ideal $I\subset B$ is invariant (and separating) if and only if $I = (Y^{\tupi})^{-1}(I)$ for all $i\in [n]$.
\end{remark}

\section{Gauge-invariant ideals}
\label{s:gauge-invariant-ideals}
Let $(A, X)$ be a proper product system of rank $n$.
This means that the left action of $A$ is given by homomorphisms $\varphi^{\tup m}\colon A\hookrightarrow \cK(X^{\tup m})$.
Such a system is always strongly compactly aligned.
We denote by $(\tau, t)$ the representation of $(A, X)$ on $\cNT(X)$.

\subsection{Invariant families, T-families, and O-families of ideals} \label{ss:families-of-ideals}
\begin{definition} \label{d:inv-fam}
  A collection $K=\{K^F\}_{F\in \cF}$ of ideals of $A$ is an \emph{invariant family} if it satisfies the condition
  \[
    K^G = (X^{\tupi})^{-1}(K^G \cap K^{G\cup \{i\}})
  \]
  for all $G\in \cF$ and $i\in [n]\setminus G$.
  We write $K_1 \preceq K_2$ if $K_1^F \subset K_2^F$ for all $F\in \cF$.
\end{definition}
\begin{definition} \label{d:to-fam}
  A collection $I = \{I^F\}_{F\in \cF}$ of ideals of $A$ is a \emph{T-family} if it satisfies the condition
  \[
    I^F = (X^{\tupi})^{-1}(I^F) \cap I^{F\cup \{i\}}
  \]
  for all $F\in \cF$ and $i\in [n]\setminus F$.
  We write $I_1 \preceq I_2$ if $I_1^F \subset I_2^F$ for all $F\in \cF$.
  A T-family $I$ is called an \emph{O-family} if $\cI \preceq I$.
\end{definition}
It is easy to see that in the case $n=1$, the notions of T-families and O-families coincide with Katsura's T-pairs and O-pairs \cite{K2007}. 

For an invariant family $K$ we define a family $I_K$ by $I_K^F \coloneqq \bigcap_{G\supset F} K^G$.
Analogously, for a T-family $I$ we define a family $K_I$ by
\[
  K_I^F \coloneqq (X^{\tup 1 - \tup 1_F})^{-1}(I^F).
\]
Here and in the following, by $(X^{\tup 0})^{-1}$ we mean the identity map on the ideals of $A$.
\begin{lemma} \label{l:H-GIK}
  Let $F \subset H$ be finite subsets of $[n]$.
  Then, we have
  \begin{enumerate}
  \item \[ \bigcap_{G, H\supset G\supset F} (X^{\tup 1_H - \tup 1_G})^{-1}(I^G) = I^F;\]
  \item \[ (X^{\tup 1_H - \tup 1_F})^{-1}\left(\bigcap_{G, H\supset G\supset F} K^G\right) = K^F. \]
  \end{enumerate}
\end{lemma}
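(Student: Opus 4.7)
The plan is to prove both statements simultaneously by induction on $k = |H \setminus F|$. The base case $k=0$ is trivial in both parts, since then $H = F$, the intersection has only the term $G=F$, and $(X^{\tup 1_H - \tup 1_F})^{-1} = (X^{\tup 0})^{-1}$ is the identity on ideals by convention.

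For the inductive step I would fix $i \in H \setminus F$ and set $H' = H \setminus \{i\}$. The key algebraic facts used throughout are that $(X^{\tup m})^{-1}$ commutes with arbitrary intersections of ideals (immediate from the characterization in Lemma \ref{l:inv_via_scalar_prod}) and that preimages compose via $(X^{\tup m + \tup k})^{-1} = (X^{\tup m})^{-1}\circ (X^{\tup k})^{-1}$ (Lemma \ref{l:inv_composition}), so that in particular $(X^{\tupi})^{-1}\circ (X^{\tup 1_{H'} - \tup 1_G})^{-1} = (X^{\tup 1_H - \tup 1_G})^{-1}$ whenever $i \notin H' \supset G$.

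For (1), I would start from the defining T-family relation $I^F = (X^{\tupi})^{-1}(I^F) \cap I^{F\cup\{i\}}$, then apply the induction hypothesis to the pair $F \subset H'$ (which rewrites $I^F$ as the intersection over all $G$ with $F\subset G \subset H'$, i.e., those not containing $i$), take $(X^{\tupi})^{-1}$ of both sides, and apply the hypothesis to $F\cup\{i\} \subset H$ (which handles the $G$ that do contain $i$). Combining the two intersections gives all $G$ with $F\subset G \subset H$. For (2), the same skeleton works: apply the invariance relation $K^F = (X^{\tupi})^{-1}(K^F \cap K^{F\cup\{i\}})$; use induction on $F \subset H'$ and on $F\cup\{i\} \subset H$ (observing the helpful identity $\tup 1_H - \tup 1_{F\cup\{i\}} = \tup 1_{H'} - \tup 1_F$, so that both preimages sit under the same $(X^{\tup 1_{H'} - \tup 1_F})^{-1}$); then intersect and compose preimages to obtain $(X^{\tup 1_H - \tup 1_F})^{-1}\bigl(\bigcap_{F \subset G \subset H} K^G\bigr)$ on the nose.

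The only non-routine point is the bookkeeping: one has to check that the index sets $\{G : F \subset G \subset H'\}$ and $\{G : F\cup\{i\} \subset G \subset H\}$ are disjoint and exhaustively cover $\{G : F \subset G \subset H\}$, split according to whether $i \in G$. No analytic subtlety appears beyond the already-established commutation of $(X^{\tup m})^{-1}$ with intersections and compositions, so once the inductive structure is set up the argument reduces to a clean combinatorial identity.
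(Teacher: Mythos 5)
Your proposal is correct and follows essentially the same route as the paper: induction on $|H\setminus F|$, removing one element $i\in H\setminus F$, and combining Lemma \ref{l:inv_composition}, the compatibility of $(X^{\tup m})^{-1}$ with intersections, and the defining relations of T-families and invariant families. The only (harmless) difference in bookkeeping is that the paper collapses each pair $(G, G\cup\{i\})$ via the defining relation and then applies the induction hypothesis once with the same $F$, whereas you split the index set by whether $i\in G$ and also invoke the hypothesis for the pair $F\cup\{i\}\subset H$ — which is legitimate because your induction is on the difference $|H\setminus F|$ ranging over all pairs.
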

\begin{proof}
  We prove both statements simultaneously by induction on $k = |H| - |F|$.
  If $k = 0$, then $H = G$ and the statements are trivial.

  Suppose that $k>0$ and the statements hold for all $H'$ with $|H'| - |F| < k$.
  Let $i$ be any element in $H\setminus F$ and let $H' = H\setminus \{i\}$.
  Then, we have
  \[
    \bigcap_{G, H\supset G\supset F} (X^{\tup 1_H - \tup 1_G})^{-1}(I^G) = \bigcap_{G, H'\supset G\supset F} (X^{\tup 1_{H'} - \tup 1_G})^{-1}((X^{\tupi})^{-1}(I^{G}) \cap I^{G\cup \{i\}}) =\]\[= \bigcap_{G, H'\supset G\supset F} (X^{\tup 1_{H'} - \tup 1_G})^{-1}(I^G) = I^F.
  \]
  Here we used Lemma \ref{l:inv_composition} in the first equality and the induction hypothesis in the last one.
  The second statement is proved analogously.
\end{proof}
\begin{proposition} \label{p:T_and_inv_families_equiv}
  The maps $K\mapsto I_K$ and $I\mapsto K_I$ are mutually inverse lattice isomorphisms between the set of invariant families and the set of T-families.
\end{proposition}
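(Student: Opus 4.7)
The plan is to verify four things: (a) if $K$ is invariant then $I_K$ is a T-family; (b) if $I$ is a T-family then $K_I$ is invariant; (c) the two assignments compose to the identity in both directions; and (d) both maps are monotone for $\preceq$. Monotonicity will be immediate from the definitions because intersections and preimages under correspondences are monotone operations on ideals, so the core work is (a)--(c).

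For (a), I would fix $K$ invariant, $F \in \cF$, and $i \in [n]\setminus F$, and split the indexing set $\{G : G \supset F\}$ into those containing $i$ and those not. The first group yields $\bigcap_{G \supset F \cup \{i\}} K^G = I_K^{F\cup\{i\}}$. For the second group, apply the invariance axiom $K^H = (X^{\tupi})^{-1}(K^H \cap K^{H \cup \{i\}})$ to each $H \supset F$ with $i \notin H$, pull $(X^{\tupi})^{-1}$ out of the intersection (preimages commute with intersections), and observe that $\{H \supset F : i \notin H\} \cup \{H \cup \{i\} : H \supset F, i \notin H\} = \{G : G \supset F\}$, so the second group equals $(X^{\tupi})^{-1}(I_K^F)$. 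Intersecting gives the T-family relation.

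For (b), I would take the T-family condition $I^G = (X^{\tupi})^{-1}(I^G) \cap I^{G \cup \{i\}}$ and apply $(X^{\tup 1 - \tup 1_G})^{-1}$ to both sides, using Lemma \ref{l:inv_composition} to rewrite compositions as preimages along tensor products: writing $\tup 1 - \tup 1_G = \tupi + (\tup 1 - \tup 1_{G\cup\{i\}})$ lets me identify $(X^{\tup 1 - \tup 1_G})^{-1}(I^{G \cup \{i\}})$ with $(X^{\tupi})^{-1}(K_I^{G \cup \{i\}})$, and commuting $\tupi$ past $\tup 1 - \tup 1_G$ in the other summand identifies the remaining term with $(X^{\tupi})^{-1}(K_I^G)$. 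This yields $K_I^G = (X^{\tupi})^{-1}(K_I^G) \cap (X^{\tupi})^{-1}(K_I^{G\cup\{i\}}) = (X^{\tupi})^{-1}(K_I^G \cap K_I^{G \cup \{i\}})$.

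For (c), the two inversions fall directly out of Lemma \ref{l:H-GIK}. Specializing part (1) to $H = [n]$ gives $I^F = \bigcap_{G \supset F} (X^{\tup 1 - \tup 1_G})^{-1}(I^G) = \bigcap_{G \supset F} K_I^G = I_{K_I}^F$. Specializing part (2) to $H = [n]$ gives $K^F = (X^{\tup 1 - \tup 1_F})^{-1}\bigl(\bigcap_{G \supset F} K^G\bigr) = (X^{\tup 1 - \tup 1_F})^{-1}(I_K^F) = K_{I_K}^F$.

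The main obstacle is step (b): one must keep track of the subtle bookkeeping between $\tupi$, $\tup 1 - \tup 1_G$, and $\tup 1 - \tup 1_{G \cup \{i\}}$ and apply Lemma \ref{l:inv_composition} in just the right way, since naive application produces exponents like $\tup 1 - \tup 1_G + \tupi$ with a $2$ in position $i$ that one has to reabsorb back into $(X^{\tupi})^{-1}(K_I^G)$. Once that decomposition is handled, the rest is routine manipulation of preimages and intersections, and monotonicity with respect to $\preceq$ follows immediately because both $\bigcap_{G \supset F}(\cdot)$ and $(X^{\tup 1 - \tup 1_F})^{-1}(\cdot)$ are order-preserving.
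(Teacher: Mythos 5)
Your proposal is correct and follows essentially the same route as the paper: verifying the T-family identity for $I_K$ by pairing the sets $G\supset F$ with and without $i$, verifying invariance of $K_I$ via Lemma \ref{l:inv_composition} (your version applies $(X^{\tup 1 - \tup 1_G})^{-1}$ to the T-family relation, which is the paper's chain of equalities read in reverse, including the same reabsorption of the exponent $\tup 1 - \tup 1_G + \tupi$), and obtaining both inversions from Lemma \ref{l:H-GIK} with $H=[n]$. Your explicit remark on monotonicity is a harmless addition the paper leaves implicit.
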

\begin{proof}
  We first show that $I_K$ is a T-family.
  First, we compute
  \[
    (X^{\tupi})^{-1}\left(I_K^F\right) = (X^{\tupi})^{-1}(\bigcap_{G\supset F} K^G) = \bigcap_{G\supset F, i\notin G} (X^{\tupi})^{-1}(K^G \cap K^{G\cup \{i\}}) = \bigcap_{G\supset F, i\notin G} K^G
  \]
  and
  \[
    (X^{\tupi})^{-1}(I_K^F)\cap I_K^{F\cup \{i\}} = \bigcap_{G\supset F, i\notin G} K^G \cap \bigcap_{G\supset F\cup \{i\}} K^G = \bigcap_{G\supset F} K^G = I_K^F.
  \]

  Now, we show that $K_I$ is an invariant family.
  For all $i\notin G$, we have
  \[
    (X^{\tupi})^{-1}(K_I^G\cap K^{G\cup \{i\}}) = (X^{\tupi})^{-1}((X^{\tup 1 - \tup 1_G})^{-1}(I^G) \cap (X^{\tup 1 - \tup 1_{G\cup \{i\}}})^{-1}(I^{G\cup \{i\}}))= \]\[
    = (X^{\tup 1 - \tup 1_G})^{-1}\left((X^{\tupi})^{-1}(I^G)\cap I^{G\cup \{i\}}\right) = (X^{\tup 1 - \tup 1_G})^{-1}(I^G) = K_I^G.
  \]
  Here, the second equality follows from Lemma \ref{l:inv_composition}.

  Our next goal is to show that $I_{K_I} = I$.
  Indeed, we have
  \[
    I_{K_I}^F = \bigcap_{G\supset F} (X^{\tup 1 - \tup 1_G})^{-1}(I^G),
  \]
  which is equal to $I$ by the special case $H=[n]$ of Lemma \ref{l:H-GIK}.

  Finally, we show that $K_{I_K} = K$.
  Analogously, we have
  \[
    K_{I_K}^F = (X^{\tup 1-\tup 1_F})^{-1}\left(\bigcap_{G\supset F} K^G\right) = K^F
  \]
  by Lemma \ref{l:H-GIK}. This completes the proof.
\end{proof}

\subsection{Extended product system} \label{ss:extended-product-system}
Let $K$ be an invariant family.
Define a \Cs{} $\bA_K \coloneqq \bigoplus_{F\in \cF} A/K^F$ together with a diagonal morphism $\Delta_K\colon A\to \bA_K$ given by $\Delta_K(a) = ([a]_{K^F})_{F\in \cF}$.
Consider the induced Hilbert $\bA_K$-modules \[\bX_K^{\tup m} \coloneqq X^{\tup m}\otimes_A \bA_K \cong \bigoplus_{F\in \cF} X^{\tup m}_{K^F}.\]
The last isomorphism follows from Lemma \ref{l:direct_sum}.
\begin{notation}
  In the following, we will frequently face the situation when we have some vector space $V$ and a family of subspaces $W^F\subset V$ indexed by $F\in \cF$.
  We use bold letters like $\boldsymbol{v}$ for elements of $\boldsymbol{V}_W = \bigoplus_{F \in \cF} V/W^F$.
  For $\boldsymbol{v} \in \boldsymbol{V}_W$, we denote by $\boldsymbol{v}_F \in V/W^F$ the component of $\boldsymbol{v}$ corresponding to $F$.
  For an element $v\in V/W^F$, we denote by $\hat v$ an arbitrary lift of $v$ to $V$.

  Suppose that there are subspaces $\boldsymbol{U}^F \subset V/W^F$ for all $F\in \cF$.
  We denote by $\boldsymbol{U} = \bigoplus_{F\in \cF} \boldsymbol{U}^F \subset \boldsymbol{V}$ their direct sum.
  Conversely, if $\boldsymbol{U} \subset \boldsymbol{V}$ is a subspace of the above form, then we denote by $\boldsymbol{U}^F$ its $F$-component.
\end{notation}
As an example of the above notation, consider the case $V=X^{\tup m}$ and $W^F = X^{\tup m}K^F$.
Then, we use $\bx, \by$ for elements of $\bX^{\tup m}_K = \bigoplus_{F\in \cF} X^{\tup m}_{K^F}$ and $\hat\bx_F$ stands for an element of $X^{\tup m}$ such that $[\hat\bx_F]_{K^F}$ is the $F$-component of $\bX$.

We would like to define a left action of $\bA_K$ on $\bX_K$ to obtain a product system $(\bA_K, \bX_K)$.
By Lemma \ref{l:direct_sum}, we have $\cK(\bX^{\tup m}_K) = \bigoplus_{F\in \cF} \cK(X^{\tup m}_{K^F}) = \bigoplus_{F\in \cF} \cK(X^{\tup m})/\cK(X^{\tup m}K^F)$.
\begin{lemma} \label{l:act_of_K}
  Let $F \in \cF$ and $\tup m \in \N^n$.
  We have $\varphi^{\tup m}(K^{F\setminus \supp \tup m}) \subset \cK(X^{\tup m}K^F)$ and, hence, the expression \[b\mapsto [\varphi^{\tup m}(\hat b)]_{K^F}\in \cK(X^{\tup m}_{K^F})\] is well-defined for any $b = [\hat b]_{K^{F\setminus \supp\tup m}}\in A/K^{F\setminus \supp \tup m}$.
  It defines a homomorphism $A/K^{F\setminus \supp \tup m} \to \cK(X^{\tup m}_{K^F})$.
\end{lemma}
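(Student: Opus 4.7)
The plan is to reduce the claim to an ideal-theoretic containment and then prove that containment by induction on $|\tup m|$.

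First, properness of $(A,X)$ gives $\varphi^{\tup m}(a) \in \cK(X^{\tup m})$ for every $a \in A$, so by Lemma \ref{l:quot_on_compacts} the inclusion $\varphi^{\tup m}(a) \in \cK(X^{\tup m}K^F)$ is equivalent to $[\varphi^{\tup m}(a)]_{K^F} = 0$ in $\cK(X^{\tup m}_{K^F})$. Unpacking the definition of $[\cdot]_{K^F}$, this is just $a\cdot X^{\tup m} \subset X^{\tup m}K^F$, i.e., $a \in (X^{\tup m})^{-1}(K^F)$. Hence the first assertion of the lemma reduces to proving
\[
  K^{F\setminus \supp \tup m} \subset (X^{\tup m})^{-1}(K^F) \quad \text{for every } F \in \cF \text{ and } \tup m \in \N^n.
\]

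I would prove this by induction on $|\tup m|$; the case $\tup m = \tup 0$ is immediate. For $|\tup m|\geq 1$, pick $i \in \supp \tup m$ and write $\tup m = \tup m' + \tupi$. Lemma \ref{l:inv_composition} provides the two factorisations
\[
  (X^{\tup m})^{-1}(K^F) = (X^{\tup m'})^{-1}\bigl((X^{\tupi})^{-1}(K^F)\bigr) = (X^{\tupi})^{-1}\bigl((X^{\tup m'})^{-1}(K^F)\bigr),
\]
while invariance of $K$ gives the elementary inclusion $K^G \subset (X^{\tupi})^{-1}(K^{G\cup\{i\}})$ whenever $i \notin G$. Three sub-cases arise. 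If $i \notin F$, then $F\setminus \supp \tup m = F\setminus \supp \tup m'$; the inductive hypothesis for $(\tup m', F)$ together with invariance of $K^F$ combine via the first factorisation. If $i \in F$ and $\tup m_i = 1$ (so $i \notin \supp \tup m'$), the inductive hypothesis for $(\tup m', F\setminus\{i\})$ composed with invariance $K^{F\setminus\{i\}} \subset (X^{\tupi})^{-1}(K^F)$ does the job, again via the first factorisation.

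The delicate case is $i \in F \cap \supp \tup m'$, where $K^F$ has no invariance in direction $i$. The workaround is that $i \notin F\setminus \supp \tup m$, so invariance \emph{does} apply to $K^{F\setminus \supp \tup m}$ and yields $K^{F\setminus \supp \tup m}\subset (X^{\tupi})^{-1}(K^{F\setminus \supp \tup m})$; combining this with the inductive inclusion $K^{F\setminus \supp \tup m}\subset (X^{\tup m'})^{-1}(K^F)$ via the \emph{second} factorisation closes the argument. This sub-case is the main obstacle to a cleaner proof: the asymmetric role of $i$ forces a different factorisation than in the other cases.

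Once the inclusion is established, the rest is formal. Two lifts $\hat b, \hat b' \in A$ of $b \in A/K^{F\setminus \supp \tup m}$ differ by an element of $K^{F\setminus \supp \tup m}$, whose image under $\varphi^{\tup m}$ lies in $\cK(X^{\tup m}K^F)$ and therefore vanishes after applying $\quot[K^F]$. Hence $b \mapsto [\varphi^{\tup m}(\hat b)]_{K^F}$ is well defined, and it is a $*$-homomorphism as the induced factorisation of $\quot[K^F]\circ \varphi^{\tup m}\colon A \to \cK(X^{\tup m}_{K^F})$ through the quotient $A/K^{F\setminus \supp \tup m}$.
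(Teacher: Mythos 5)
Your proof is correct and follows essentially the same route as the paper: reduce the claim to the containment $K^{F\setminus\supp\tup m}\subset (X^{\tup m})^{-1}(K^F)$ and prove it by induction on $|\tup m|$ using Lemma \ref{l:inv_composition} and the invariant-family condition. The only difference is cosmetic: the paper avoids your three-way case split (in particular the delicate case $i\in F\cap \supp\tup m'$) by always peeling off $X^{\tupi}$ on the right and applying the inductive hypothesis to the set $F\setminus\{i\}$, since the rank-one inclusion $K^{F\setminus\{i\}}\subset (X^{\tupi})^{-1}(K^F)$ holds whether or not $i\in F$.
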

\begin{proof}
  The inclusion $\varphi^{\tup m}(K^{F\setminus \supp \tup m}) \subset \cK(X^{\tup m}K^F)$ is equivalent to $K^{F\setminus \supp \tup m}X^{\tup m}\subset X^{\tup m}K^F$ and to $(X^{\tup m})^{-1}(K^F)\supset K^{F\setminus \supp \tup m}$.
  We will show this by induction on $|\tup m|$.
  Suppose that $\tup m = \tup 1_i$ for some $i \in [n]$.
  Let us prove that $(X^{\tupi})^{-1}(K^F) \supset K^{F\setminus \{i\}}$.
  Indeed, if $i\in F$, then $(X^{\tupi})^{-1}(K^F) \supset (X^{\tupi})^{-1}(K^F\cap K^{F\setminus \{i\}}) = K^{F\setminus \{i\}}$ by the definition of invariant family.
  Analogously, if $i\notin F$, then $(X^{\tupi})^{-1}(K^F) \supset (X^{\tupi})^{-1}(K^F\cap K^{F\cup \{i\}}) = K^{F} = K^{F\setminus \{i\}}$.
  This shows the base of induction.

  For induction step, write $X^{\tup m} = X^{\tup m - \tup 1_i}\otimes_A X^{\tup 1_i}$ for some $i\in \supp \tup m$.
  Then, we have
  \[
    (X^{\tup m})^{-1}(K^F) = (X^{\tup m - \tup 1_i})^{-1}((X^{\tup 1_i})^{-1}(K^F)) \supset (X^{\tup m - \tup 1_i})^{-1}(K^{F\setminus \{i\}}) \supset\]\[\supset K^{(F \setminus \{i\})\setminus \supp (\tup m - \tup 1_i)} = K^{F\setminus \supp \tup m}.
  \]
  Here we used Lemma \ref{l:inv_composition} and the induction hypothesis twice.
  We have proved the first claim.

  Now, the proved inclusion implies that the kernel of the homomorphism $[-]_{K^F}\circ \varphi^{\tup m} \colon A \to \cK(X^{\tup m})$ contains $K^{F\setminus \supp \tup m}$.
  Therefore, it descends to a homomorphism $A/K^{F\setminus \supp \tup m} \to \cK(X^{\tup m}_{K^F})$ given by the formula in the statement.
\end{proof}

\begin{proposition} \label{p:extended_prod_sys}
  For any invariant family $K$ there is a proper product system $(\bA_K, \bX_K)$ with a left action of $\bA_K$ given componentwise by
  \[
    \varphi^{\tup m}_K(\ba)_F = [\varphi^{\tup m}(\hat\ba_{F\setminus \supp \tup m})]_{K^F}\text{ for all } \ba \in \bA_K,\text{ and } F\in \cF.
  \]
  That is, the left action on the $F$-component is given by the action of the $F\setminus \supp \tup m$-component.
\end{proposition}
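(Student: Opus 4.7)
My plan is to verify the three ingredients of a product system in turn: the left action $\varphi^{\tup m}_K$, the multiplication maps $\mu^{\tup m,\tup k}_{\bX_K}$, and associativity, working throughout with the decomposition $\bX_K^{\tup m} = \bigoplus_{F\in \cF} X^{\tup m}_{K^F}$ coming from Lemma \ref{l:direct_sum}. To construct $\varphi^{\tup m}_K$, I first apply Lemma \ref{l:act_of_K} to obtain for each $F \in \cF$ a $*$-homomorphism $A/K^{F\setminus \supp \tup m} \to \cK(X^{\tup m}_{K^F})$, then assemble these into a map $\bA_K \to \cK(\bX_K^{\tup m})$ whose $F$-th component is the above precomposed with the canonical projection $\bA_K \twoheadrightarrow A/K^{F\setminus \supp \tup m}$. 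The resulting homomorphism lands in $\cK(\bX_K^{\tup m}) = \bigoplus_F \cK(X^{\tup m}_{K^F})$, giving the left action and properness simultaneously; unwinding the definition recovers the formula in the statement.

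For the multiplication maps I first decompose
\[
  \bX_K^{\tup m} \otimes_{\bA_K} \bX_K^{\tup k} = \bigoplus_{F,G\in \cF} X^{\tup m}_{K^F}\otimes_{\bA_K} X^{\tup k}_{K^G}
\]
using Lemmas \ref{l:tensor_over_sum} and \ref{l:direct_sum}. The right $\bA_K$-action on $X^{\tup m}_{K^F}$ is concentrated on the $F$-summand of $\bA_K$, while by the previous paragraph the left $\bA_K$-action on $X^{\tup k}_{K^G}$ is concentrated on the $(G\setminus \supp \tup k)$-summand; hence only the diagonal-after-shift pairs with $F = G\setminus \supp \tup k$ survive the balanced tensor product. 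On each such summand I set
\[
  \mu^{\tup m,\tup k}_{\bX_K}\bigl([\hat\bx]_{K^{G\setminus \supp \tup k}} \otimes [\hat\by]_{K^G}\bigr) = [\hat\bx\cdot \hat\by]_{K^G} \in X^{\tup m+\tup k}_{K^G}.
\]
Well-definedness uses the key inclusion $K^{G\setminus \supp \tup k} X^{\tup k} \subset X^{\tup k} K^G$ established in the proof of Lemma \ref{l:act_of_K}: altering $\hat\bx$ modulo $X^{\tup m}K^{G\setminus \supp \tup k}$ perturbs $\hat\bx\cdot\hat\by$ by an element of $X^{\tup m}\cdot X^{\tup k}K^G \subset X^{\tup m+\tup k}K^G$, while altering $\hat\by$ modulo $X^{\tup k}K^G$ is harmless. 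Preservation of inner products follows from the identity $\langle \hat\bx_1\cdot \hat\by_1, \hat\bx_2\cdot \hat\by_2 \rangle = \langle \hat\by_1, \varphi^{\tup k}(\langle \hat\bx_1,\hat\bx_2 \rangle)\hat\by_2 \rangle$ in $X^{\tup m+\tup k}$ combined with the description of the tensor-product inner product, and unitarity is then inherited from that of the original $\mu^{\tup m,\tup k}_X$.

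Associativity of $\mu^{\tup m,\tup k}_{\bX_K}$ follows componentwise from the associativity of $\mu$ in $X$, since both are defined by the same formula on lifts, and compatibility of $\mu^{\tup m,\tup k}_{\bX_K}$ with the left $\bA_K$-action reduces to the identity $(G\setminus \supp \tup k)\setminus \supp \tup m = G \setminus \supp(\tup m + \tup k)$, which guarantees that the indices on which the left action is supported agree on both sides of $\mu^{\tup m,\tup k}_{\bX_K}$. I expect the main obstacle to be purely bookkeeping: tracking how the shifted left action on $\bX_K^{\tup k}$ forces only diagonal-after-shift summands to survive the tensor product over $\bA_K$, and verifying that the various quotient maps implementing the left action compose correctly across all degrees. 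Once this index calculus is set up, each verification reduces to a short computation on representative lifts.
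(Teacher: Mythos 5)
Your proposal is correct and follows essentially the same route as the paper: the left action is assembled from Lemma \ref{l:act_of_K}, the tensor product over $\bA_K$ is decomposed via Lemmas \ref{l:direct_sum} and \ref{l:tensor_over_sum} so that only the summands with $F = G\setminus \supp \tup k$ survive, and multiplication is defined on lifts with well-definedness resting on the inclusion $K^{G\setminus \supp \tup k}X^{\tup k}\subset X^{\tup k}K^G$. Your extra details (the inner-product check and the index identity $(G\setminus \supp \tup k)\setminus \supp \tup m = G\setminus \supp(\tup m+\tup k)$) simply make explicit steps the paper leaves as routine.
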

\begin{proof}
  We know by Lemma \ref{l:act_of_K} that the formula in the statement gives a well-defined homomorphism $\varphi^{\tup m}_K\colon \bA_K \to \cK(X^{\tup m}_K)$.
  Hence, it defines a left action of $\bA_K$ on $\bX_K^{\tup m}$.

  Let us analyze the tensor product $X^{\tup k}_K\otimes_{\bA_K} X^{\tup m}_K$ for $\tup k, \tup m \in \N^n$.
  By Lemma \ref{l:tensor_over_sum}, we have
  \[
    (\bX^{\tup k}_{K} \otimes_{\bA_K} \bX^{\tup m}_K)^F = \bigoplus_{G\in \cF} X^{\tup k}_{K^G}\otimes_{A_{K^G}} X^{\tup m}_{K^F} = X^{\tup k}_{K^{F\setminus \supp \tup m}}\otimes_{A_{K^{F\setminus \supp \tup m}}} X^{\tup m}_{K^F}.
  \]
  The last equality follow from the fact that $A_{K^G}$ acts by zero on $X^{\tup m}_{K^F}$ unless $G = F\setminus\supp\tup m$.
  Now, define a multiplication map $(\bX^{\tup k}_K \otimes_{\bA_K} \bX^{\tup m}_K)^F = X^{\tup k}_{K^{F\setminus \supp \tup m}}\otimes_{A_{K^{F\setminus \supp \tup m}}} X^{\tup m}_{K^F} \to (\bX^{\tup k + \tup m}_K)^F = X^{\tup k+\tup m}_{K^F}$ by the formula
  \[
    x\cdot y = [\hat x\cdot \hat y]_{K^F}
  \]
  for $x\in X^{\tup k}_{K^{F\setminus \supp \tup m}}$ and $y\in X^{\tup m}_{K^F}$.
  It is well-defined by Lemma \ref{l:act_of_K}.
  Moreover, it is unitary since it is induced by the unitary multiplication $X^{\tup m} \otimes_A X^{\tup k} \to X^{\tup m + \tup k}$.
  This defines multiplication isomorphism of Hilbert $\bA_K$-modules $\bX^{\tup k}_K \otimes_{\bA_K} \bX^{\tup m}_K \cong \bX^{\tup k + \tup m}_K$.
  It is also easy to see that the isomorphism is compatible with the left actions of $\bA_K$.
  This shows that $(\bA_K, \bX_K)$ is a product system, which is proper by definition.
\end{proof}
\begin{remark}
  The construction of the product system $(\bA_K, \bX_K)$ above is inspired by Katsura's \cite[Definition 6.1]{K2007}.
  However, it is different from Katsura's extended product system even for rank 1 product systems and his construction does not have a straightforward generalization to the higher rank case.
  Katsura works solely with T-pairs of ideals, while the equivalent picture of invariant families turns out to be more useful for the higher rank case.
  The extended product system is our main tool for the definition of relative CNP-algebras and classification of gauge-invariant ideals.
\end{remark}

Our next goal is to compute the CNP-ideals $\cI_K$ of $(\bA_K, \bX_K)$.
We have
\[
  \ker \varphi^{\tupi}_K = \{ \ba\in \bA_K\colon [\varphi^{\tupi}(\hat\ba_{F\setminus \{i\}})]_{K^F} = 0 \text{ for all } F\in \cF\}.
\]
Therefore, for any $\ba\in \ker \varphi^{\tupi}_K$, there is no condition on $\ba_G$ with $i \in G$ and there are two conditions for $a_G$ with $i\notin G$.
These conditions are $[\varphi^{\tupi}(\ba_{G})]_{K^G} = 0$ and $[\varphi^{\tupi}(\ba_{G})]_{K^{G\cup \{i\}}} = 0$.
They can be rewritten as $\hat{\ba}_GX^{\tupi}\subset X^{\tupi}(K^G\cap K^{G\cup \{i\}})$.
In its turn, this is equivalent to $\hat{\ba}_G\in (X^{\tupi})^{-1}(K^G\cap K^{G\cup \{i\}}) = K^G$.
Therefore, we have $\hat{\ba}_G \in K^G$ and hence $\ba_G = 0$.
Finally, we obtain
\[
  \ker \varphi^{\tupi}_K = \{ \ba\in \bA_K\colon \ba_G = 0 \text{ for all } G\in \cF \text{ with } i\notin G\}
\]
and
\[
  \bigcap_{i\in F} \ker \varphi^{\tupi}_K = \{ \ba\in \bA_K\colon \ba_G = 0 \text{ for all } G\in \cF \text{ with } F\not\subset G \}
\]
for any $F\in \cF$.
\begin{lemma} \label{l:cnp_ideals_of_AK}
  For any $F\in \cF$, we have
  \[
    \cI_K^F = \cJ_K^F = \{ \ba\in \bA_K\colon \ba_G = 0 \text{ for all } G\in \cF \text{ with } F\subset G\}.
  \]
\end{lemma}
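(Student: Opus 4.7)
The plan is to separately establish the two equalities, starting with $\cJ_K^F$ and then showing that automatically $\cJ_K^F = \cI_K^F$. I rely heavily on the computation of $\bigcap_{i\in F} \ker \varphi_K^{\tupi}$ that the excerpt has already carried out just before the lemma statement.

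First, since $(\bA_K, \bX_K)$ is proper by Proposition \ref{p:extended_prod_sys}, we have $\varphi^{\tupi}_K(\bA_K) \subset \cK(\bX^{\tupi}_K)$ for every $i$, so the factor $\bigcap_{i=1}^n (\varphi^{\tupi}_K)^{-1}(\cK(\bX^{\tupi}_K))$ in the definition of $\cJ_K^F$ equals all of $\bA_K$. Thus $\cJ_K^F = \bigl(\bigcap_{i\in F} \ker \varphi_K^{\tupi}\bigr)^\perp$. The preceding computation identifies $\bigcap_{i\in F} \ker \varphi_K^{\tupi}$ with $\bigoplus_{G \supset F} A/K^G \subset \bA_K$. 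In any $C^*$-direct sum $\bigoplus_G B_G$, the annihilator of a subsum $\bigoplus_{G \in S} B_G$ is the complementary subsum $\bigoplus_{G \notin S} B_G$ (componentwise multiplication forces $\ba_G B_G = 0$, hence $\ba_G = 0$ for each $G \in S$). Applying this with $S = \{G : F \subset G\}$ yields the desired description of $\cJ_K^F$.

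Next, I prove $\cI_K^F = \cJ_K^F$ by showing $\cJ_K^F \subset (\bX^{\tup m}_K)^{-1}(\cJ_K^F)$ for every $\tup m \perp \tup 1_F$; the reverse inclusion is immediate from the definition. Let $\ba \in \cJ_K^F$, so $\ba_H = 0$ whenever $H \supset F$. Using Lemma \ref{l:inv_via_scalar_prod}, it suffices to check that $\varphi_K^{\tup m}(\ba) \in \cK(\bX^{\tup m}_K)$ lies in the component-sum $\bigoplus_{G \not\supset F} \cK(X^{\tup m}_{K^G})$, i.e.\ that $\varphi_K^{\tup m}(\ba)_G = 0$ for every $G \supset F$. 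By the formula in Proposition \ref{p:extended_prod_sys},
\[
  \varphi_K^{\tup m}(\ba)_G = [\varphi^{\tup m}(\hat \ba_{G \setminus \supp \tup m})]_{K^G}.
\]
Since $\tup m \perp \tup 1_F$ means $\supp \tup m \cap F = \emptyset$, the containment $G \supset F$ forces $G \setminus \supp \tup m \supset F$, hence $\ba_{G \setminus \supp \tup m} = 0$. We may therefore choose the lift $\hat \ba_{G \setminus \supp \tup m}$ inside $K^{G \setminus \supp \tup m}$, and Lemma \ref{l:act_of_K} gives $\varphi^{\tup m}(K^{G \setminus \supp \tup m}) \subset \cK(X^{\tup m} K^G)$, which is precisely the kernel of $[{-}]_{K^G}\colon \cK(X^{\tup m}) \to \cK(X^{\tup m}_{K^G})$. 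So the component vanishes, concluding the proof.

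There is no real obstacle here; the argument is essentially a bookkeeping exercise once one sees how the direct-sum decomposition of $\bA_K$ and the shift $G \mapsto G \setminus \supp \tup m$ interact with invariance of $K$. The one place demanding care is the verification that the shifted index $G \setminus \supp \tup m$ still contains $F$, which is exactly where the orthogonality assumption $\tup m \perp \tup 1_F$ (built into the definition of $\cI^F$) is used.
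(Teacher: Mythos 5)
Your argument is correct and essentially the same as the paper's: identify $\cJ_K^F$ via properness and the annihilator of $\bigcap_{i\in F}\ker\varphi_K^{\tupi}$ in the direct sum, then show $\cJ_K^F\subset(\bX_K^{\tup m})^{-1}(\cJ_K^F)$ for $\tup m\perp\tup 1_F$ using the index shift $G\mapsto G\setminus\supp\tup m$. The only (immaterial) difference is that the paper checks the single generators $i\notin F$ and bootstraps to general $\tup m$ via Lemma \ref{l:inv_composition}, whereas you handle general $\tup m$ directly through the formula of Proposition \ref{p:extended_prod_sys} and Lemma \ref{l:act_of_K}.
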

\begin{proof}
  Since the product system is proper, we have
  \[
    \cJ_K^F = \left(\bigcap_{i\in F} \ker \varphi^{\tupi}_K\right)^{\perp} = \{ \ba\in \bA_K\colon \ba_G = 0 \text{ for all } G\in \cF \text{ with } F\subset G \}.
  \]
  To prove the equality $\cI_K^F = \cJ_K^F$, it is enough to show that $\cJ_K^F \bX^{\tupi}_K \subset \bX^{\tupi}_K \cJ_K^F$ for any $i\notin F$.
  Indeed, in this case we have $(X^{\tupi})^{-1}(\cJ^F_K) \supset \cJ^F_K$ for all $i\notin F$ and hence $(X^{\tup m})^{-1}(\cJ^F_K) \supset \cJ^F_K$ for all $\tup m \perp \tup 1_F$ by Lemma \ref{l:inv_composition}.
  Then, the formula \eqref{e:def_of_i} implies $\cI_K^F = \cJ_K^F$.

  Let $\ba\in \cJ_K^F$ and $\bx\in X^{\tupi}_K$ be arbitrary.
  Then, we have
  \[
    (\ba\cdot \bx)_G = \ba_{G\setminus \{i\}}\cdot \bx_G \text{ for all } G\in \cF.
  \]
  This equals zero if $F\subset G\setminus \{i\}$, which is equivalent to $F\subset G$ since $i\notin F$.
  Therefore, we have
  \[
    \cJ_K^F \bX^{\tupi}_K \subset \{\bx\in \bX^{\tupi}_K\colon \bx_G = 0 \text{ for all } G\in \cF \text{ with } F\subset G\} = \bX^{\tupi}_K \cJ_K^F.
  \]
  We conclude that $\cI_K^F = \cJ_K^F$.
\end{proof}

We now classify $\bX_K$-invariant ideals in $\bA_K$.
Since $\bA_K$ is a direct sum of $C^*$-algebras, every ideal in $\bA_K$ is a direct sum of ideals in $(\bA_K)^F = A/K^F$.
Moreover, ideals in $A/K^F$ are in bijective correspondence with ideals in $A$ containing $K^F$.
For a family of ideals $\{N^F\}_{F\in \cF}$ of $A$ with $N^F\supset K^F$, we denote by $N/K$ the ideal $\bigoplus_{F\in \cF} N^F/K^F$ of $\bA_K$.

\begin{proposition} \label{p:invariant_ideals_of_AK}
  An ideal $N/K$ of $\bA_K$ is invariant if and only if $N$ is an invariant family.
  There is a canonical isomorphism of product systems $((\bA_K)_{N/K}, (\bX_K)_{N/K})\cong (\bA_N, \bX_N)$ and we identify those.
  Therefore, there is a lattice isomorphism between invariant ideals in $\bA_K$ and invariant families $N\succeq K$.
  Moreover, every invariant ideal of $\bA_K$ is separating.
\end{proposition}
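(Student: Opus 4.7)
The plan is to break the statement into three pieces: (i) translating positive/negative invariance of $N/K$ in $\bA_K$ into component-wise conditions on the family $N$, (ii) identifying the quotient product system, and (iii) verifying the separation criterion.

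First, I would establish the identification $((\bA_K)_{N/K}, (\bX_K)_{N/K}) \cong (\bA_N, \bX_N)$ assuming $N$ is an invariant family with $N \succeq K$. On $C^*$-algebras this is formal: $\bA_K/(N/K) = \bigoplus_F (A/K^F)/(N^F/K^F) \cong \bigoplus_F A/N^F = \bA_N$. For the Hilbert modules, use that the $F$-component of $(\bX_K)_{N/K}$ is the quotient of $X^{\tup m}_{K^F}$ by $X^{\tup m}_{K^F}\cdot (N^F/K^F)$, which equals $X^{\tup m}/X^{\tup m}N^F = X^{\tup m}_{N^F}$. The descended left action on the $F$-component is that of $\hat\ba_{F\setminus\supp\tup m}$ modulo $N^F$, and its well-definedness is exactly the inclusion $\varphi^{\tup m}(N^{F\setminus\supp\tup m}) \subset \cK(X^{\tup m}N^F)$ supplied by Lemma \ref{l:act_of_K} applied to $N$.

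Second, I would unravel positive invariance $(N/K)\bX^{\tupi}_K \subset \bX^{\tupi}_K(N/K)$ componentwise: for $\ba \in N/K$ and $\bx \in \bX^{\tupi}_K$, the $F$-component of $\ba\cdot\bx$ is $[\hat\ba_{F\setminus\{i\}}\hat\bx_F]_{K^F}$, which lies in $X^{\tupi}_{K^F}(N^F/K^F)$ iff $\hat\ba_{F\setminus\{i\}} X^{\tupi} \subset X^{\tupi}N^F$ (using $K^F \subset N^F$). Ranging over $F$ with $F\setminus\{i\} = G$ for $i \notin G$, this yields the inclusion
\[
  N^G \subset (X^{\tupi})^{-1}\bigl(N^G \cap N^{G\cup\{i\}}\bigr) \quad \text{for all } i\notin G.
\]
For the reverse inclusion, I would use negative invariance $L^{\{i\}}_{N/K} \cap \cI^{\{i\}}_{\bX_K} \subset N/K$. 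By Lemma \ref{l:cnp_ideals_of_AK}, $\cI^{\{i\}}_{\bX_K}$ consists of $\ba$ with $\ba_G = 0$ whenever $i \in G$. The computation in paragraph two also shows that $\ba \in L^{\{i\}}_{N/K}$ iff for every $G$ with $i \notin G$, $\hat\ba_G \in (X^{\tupi})^{-1}(N^G \cap N^{G\cup\{i\}})$. Given $a \in (X^{\tupi})^{-1}(N^G \cap N^{G\cup\{i\}})$ with $i \notin G$, I would plug in the element $\ba \in \bA_K$ with $\hat\ba_G = a$ and all other components zero; it sits in $L^{\{i\}}_{N/K} \cap \cI^{\{i\}}_{\bX_K}$, hence in $N/K$, forcing $a \in N^G$. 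Iterating with intersections yields the full invariant family equation. Combining with Theorem \ref{t:invariant_if_pos_and_neg}, this proves the equivalence ``$N/K$ invariant in $\bA_K$'' $\Leftrightarrow$ ``$N$ is an invariant family''. Conversely, if $N$ is invariant, positive invariance follows from the established direction, and negative invariance follows because via the identification from paragraph one, the universal CNP-representation of $(\bA_N, \bX_N)$ composed with the quotient $\bA_K \to \bA_N$ is CNP (one applies Lemma \ref{l:invariant_if_cnp}).

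Finally, for separation I would apply Proposition \ref{p:separating_criterion}: it suffices to check $[\cI^F_{\bX_K}]_{N/K} = \cI^F_{\bX_N}$ for all $F \in \cF$. By Lemma \ref{l:cnp_ideals_of_AK}, both sides consist of those elements whose $G$-components vanish for $G \supset F$; since the quotient map $\bA_K \to \bA_N$ is componentwise surjective, every such element in $\bA_N$ lifts to one in $\bA_K$, giving the required equality. The lattice bijection $N \mapsto N/K$ then follows since inclusions of ideals in $\bA_K$ correspond to inclusions $N \preceq N'$ of invariant families extending $K$. The main obstacle, as I see it, is the careful reverse direction in the translation of negative invariance: one must choose the right test elements in $L^{\{i\}}_{N/K} \cap \cI^{\{i\}}_{\bX_K}$ and keep track of which components of $\bA_K$ they live in, because $\cI^{\{i\}}_{\bX_K}$ only forces vanishing on components indexed by $G \ni i$, leaving precisely the freedom needed to extract the reverse inclusion in the invariant family condition.
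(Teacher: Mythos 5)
Your proposal is correct and follows essentially the same route as the paper: translate positive and negative invariance of $N/K$ into componentwise conditions on $N$, combine them via Theorem \ref{t:invariant_if_pos_and_neg} to recover the invariant-family equation, identify the quotient system with $(\bA_N,\bX_N)$, and verify separation through Proposition \ref{p:separating_criterion} together with the explicit description of the CNP-ideals in Lemma \ref{l:cnp_ideals_of_AK}. The only cosmetic differences are that you test negative invariance with singleton sets $F=\{i\}$ and single-component elements (the paper computes $L^F_{N/K}\cap\cI^F_K$ for all $F$) and route the converse through Lemma \ref{l:invariant_if_cnp}, which amounts to the same computation.
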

\begin{proof}
  Suppose $N/K$ is an ideal of $\bA_K$. We will find necessary and sufficient conditions for $N/K$ to be positively invariant and negatively invariant.
  Then, we will apply Theorem \ref{t:invariant_if_pos_and_neg} to deduce the conditions for $N/K$ to be invariant and compare them to the definition of invariant families.

  \textbf{Positive invariance.}
  Recall that $N/K$ is positively invariant if and only if $(N/K) \bX^{\tupi}_K \subset \bX^{\tupi}_K (N/K)$ for all $i\in [n]$.
  Componentwise, we have
  \[
    ((N/K) \bX^{\tupi}_K)^G = (N^{G\setminus\{i\}}/K^{G\setminus\{i\}}) X^{\tupi}/X^{\tupi}K^G = (N^{G\setminus\{i\}} X^{\tupi})/(X^{\tupi}K^G \cap N^{G\setminus\{i\}} X^{\tupi})
  \]
  and
  \[
    (\bX^{\tupi}_K (N/K))^G = (X^{\tupi}/X^{\tupi}K^G) (N^G/K^G) = (X^{\tupi} N^G/X^{\tupi}K^G)
  \]
  for all $G\in \cF$.
  The former contains the latter if and only if $N^{G\setminus\{i\}} X^{\tupi} \subset X^{\tupi} N^G$. Combining these conditions for $G = F$ and $G = F\cup \{i\}$, we conclude that $N$ is positively invariant if and only if
  \begin{equation} \label{e:pos_of_NK}
    N^F \subset (X^{\tupi})^{-1}(N^F \cap N^{F\cup \{i\}}) \text{ for all } F\in \cF \text{ and } i\in [n]\setminus F.
  \end{equation}
  In particular, we see that $N/K$ is positively invariant if $N$ is an invariant family.

  \textbf{Negative invariance.}
  We compute the ideals $L^F_{N/K}$.
  For $i\in [n]$, we have
  \[
    L^i_{N/K} = \{ \ba\in \bA_K\colon \ba_{F\setminus \{i\}} X^{\tupi}/X^{\tupi}K^F \subset X^{\tupi} N^F/X^{\tupi}K^F \text{ for all } F\in \cF\}.
  \]
  The inclusion $\ba_{F\setminus \{i\}} X^{\tupi}/X^{\tupi}K^F \subset X^{\tupi} N^F/X^{\tupi}K^F$ is equivalent to the condition $\hat{\ba}_{F\setminus \{i\}} \in (X^{\tupi})^{-1}(N^F)$, where $\hat{\ba}_{F\setminus \{i\}}$ is an arbitrary lift of $\ba_{F\setminus \{i\}}$ to $A$.
  Therefore, we have
  \begin{align*}
    L^i_{N/K}               & = \{ \ba\in \bA_K\colon \hat{\ba}_{G} \in (X^{\tupi})^{-1}(N^G \cap N^{G\cup \{i\}}) \text{ for all } G\in \cF, i\notin G\},         \\
    L^F_{N/K}               & = \{ \ba\in \bA_K\colon \hat{\ba}_{G} \in (X^{\tupi})^{-1}(N^G \cap N^{G\cup \{i\}}) \text{ for all } G\in \cF, i\in F\setminus G\}, \\
    \shortintertext{and}
    L^F_{N/K}\cap \cI_{K}^F & = \{ \ba\in L^F_{N/K}\colon \hat{\ba}_{G} = 0 \text{ for all } G\in \cF \text{ with } F\subset G\}.
  \end{align*}
  The condition for negative invariance $L^F_{N/K}\cap \cI_{K}^F \subset N/K$ is therefore equivalent to
  \[
    (X^{\tupi})^{-1}(N^G \cap N^{G\cup \{i\}}) \subset N^G \text{ for all } G\in \cF, i\in F\setminus G.
  \]
  Since this condition should hold for all $F\in \cF$, we conclude that $N/K$ is negatively invariant if and only if
  \begin{equation} \label{e:neg_of_NK}
    (X^{\tupi})^{-1}(N^G \cap N^{G\cup \{i\}}) \subset N^G \text{ for all } G\in \cF, i\in [n]\setminus G.
  \end{equation}

  \textbf{Invariance.}
  Combining \eqref{e:pos_of_NK} and \eqref{e:neg_of_NK}, we see that $N/K$ is invariant if and only if
  \[
    N^F = (X^{\tupi})^{-1}(N^F \cap N^{F\cup \{i\}}) \text{ for all } F\in \cF \text{ and } i\in [n]\setminus F.
  \]
  This is exactly the condition for $N$ to be an invariant family.
  Moreover, this shows that there is a lattice isomorphism between the lattice of invariant families containing $K$ and the lattice of invariant ideals in $(\bA_K, \bX_K)$.

  \textbf{Separation.}
  Let us use Proposition \ref{p:separating_criterion} to check that $N/K$ is separating for any invariant family $N\succeq K$.
  First, observe that $(\bA_K/(N/K), \bX_K/(N/K))$ is isomorphic to $(\bA_N, \bX_N)$.
  By Lemma \ref{l:cnp_ideals_of_AK}, we know exactly how the CNP-ideals of $\bA_K$ and $\bA_N$ look like.
  The equality $[\cI^F_K]_{N/K} = \cI^F_{N/K}$ is trivial from the description of these ideals.
\end{proof}
Proposition \ref{p:invariant_ideals_of_AK} classifies gauge-invariant ideals in $\cNO(\bX_K)$.
We will soon see that $\cNO(\bX_K)$ is isomorphic to a certain gauge-invariant quotient of $\cNT(X)$.
This is how we obtain a classification of gauge-invariant ideals in $\cNT(X)$ and $\cNO(X)$.

\subsection{Relative Cuntz-Nica-Pimsner algebra}
We assume henceforth that our product systems are proper. By Lemma \ref{l:prop_of_projs}.\ref{l:prop_of_projs:p_of_a}, for any representation $(\sigma, s)$ of a proper product system $(B, Y)$ in a \Cs{} $D$, the elements $\sigma(a)\cdot p^{\tup m}_s, \sigma(a)\cdot Q^F_s$, and $\sigma(a)\cdot P^F_s$ are in $D$ for any $a\in B$, nonzero $\tup m \in \N^n$, and $F\in \cF$.
We will use this fact without further elaboration.
\begin{definition} \label{d:relative-algebra}
  Let $I$ be a T-family of ideals of $A$. We define a gauge-invariant ideal $\cC_I\subset \cNT(X)$ to be the one generated by the elements
  \[
    \tau(a)\cdot Q_t^F = \sum_{\tup 0 \leq \tup m \leq \tup 1_F} (-1)^{|\tup m|} \psi_t^{\tup m}(\varphi^{\tup m}(a)) 
  \]
  for all $F\in \cF$ and $a\in I^F$.
  We call the algebra $\cNO(X, I)\coloneqq \cNT(X)/\cC_I$ the \emph{$I$-relative Cuntz-Nica-Pimsner algebra of $(A, X)$}.

  A Nica-covariant representation $(\sigma, s)$ is called \emph{$I$-Cuntz-Nica-Pimsner-covariant} if $\tau(a)\cdot Q^F_s = 0$ for all $F\in \cF$ and $a\in I^F$.
  Equivalently, it is $I$-relative if and only if the induced representation $\sigma\times_0 s$ of $\cNT(X)$ factors through $\cNO(X, I)$.
  We denote the corresponding representation of $\cNO(X, I)$ by $\sigma \times_I s$.
\end{definition}
From the definition, it is obvious that $\cNT(X) \cong \cNO(X, 0)$ and $\cNO(X) \cong \cNO(X, \cI)$, where $\cI$ is the CNP-familiy of ideals from Definition \ref{d:cnp-rep}.
Also, observe that $\cC_I \subset \cC_{I'}$ if $I \preceq I'$.
\begin{lemma} \label{l:CI_gen_by_KI}
  The ideal $\cC_I$ is also the ideal generated by the elements $\tau(a)\cdot P^F_t$ for all $F\in \cF$ and $a\in K_I^F$.
\end{lemma}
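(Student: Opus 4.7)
My plan is to prove both inclusions between $\cC_I$ and the ideal $\cC_I'$ generated by the elements $\tau(a)\cdot P^F_t$ for $F\in\cF$ and $a\in K_I^F$. The backbone of the argument is the combinatorial identity $Q^F_t = \sum_{H\supset F} P^H_t$, obtained by expanding $\prod_{i\notin F}((1-p^{\tupi}_t) + p^{\tupi}_t) = 1$ inside $Q^F_t$.

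For the inclusion $\cC_I \subseteq \cC_I'$, I would take a generator $\tau(a)\cdot Q^F_t$ of $\cC_I$ (so $a\in I^F$) and use the identity above to rewrite it as $\sum_{H\supset F} \tau(a)\cdot P^H_t$. To recognize each summand as a generator of $\cC_I'$ I would invoke the special case $H = [n]$ of Lemma \ref{l:H-GIK}(1), which gives $I^F = \bigcap_{G\supset F} K_I^G$. Hence $a\in K_I^H$ for every $H\supset F$, so each $\tau(a)\cdot P^H_t\in \cC_I'$.

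For the reverse inclusion $\cC_I' \subseteq \cC_I$, I would fix $a\in K_I^F$ and rewrite $\tau(a)\cdot P^F_t$ so that a generator of $\cC_I$ appears. Nica-covariance yields $\prod_{i\notin F} p^{\tupi}_t = p^{\tup 1-\tup 1_F}_t$, so $P^F_t = Q^F_t\cdot p^{\tup 1-\tup 1_F}_t$. Since $(A,X)$ is proper, $\varphi^{\tup 1-\tup 1_F}(a)$ is compact, and Lemma \ref{l:prop_of_projs}.\ref{l:prop_of_projs:p_of_a} gives
\[
  \tau(a)\cdot P^F_t = \psi^{\tup 1-\tup 1_F}_t(\varphi^{\tup 1-\tup 1_F}(a))\cdot Q^F_t.
\]
The assumption $a\in K_I^F$ means $a\cdot X^{\tup 1-\tup 1_F} \subset X^{\tup 1-\tup 1_F} I^F$, so Lemma \ref{l:quot_on_compacts} places $\varphi^{\tup 1-\tup 1_F}(a)$ inside the ideal $\cK(X^{\tup 1-\tup 1_F} I^F)$ of $\cK(X^{\tup 1-\tup 1_F})$, i.e.\ into the closed linear span of rank-one operators $\theta_{x, y\cdot c}$ with $x,y\in X^{\tup 1-\tup 1_F}$ and $c\in I^F$. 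By norm-continuity of $\psi^{\tup 1-\tup 1_F}_t$, it is enough to check that each $\psi^{\tup 1-\tup 1_F}_t(\theta_{x, yc})\cdot Q^F_t$ lies in $\cC_I$. Computing $\psi^{\tup 1-\tup 1_F}_t(\theta_{x, yc}) = t(x)\tau(c^*)t(y)^*$ and noting that $\supp(\tup 1-\tup 1_F) = [n]\setminus F$, Lemma \ref{l:prop_of_projs}.\ref{l:prop_of_projs:ps_sp} lets me commute $Q^F_t$ past $t(y)^*$, giving $t(x)\cdot (\tau(c^*)Q^F_t)\cdot t(y)^* \in \cC_I$ since $c^*\in I^F$.

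The hard part will be this last direction: correctly identifying $\varphi^{\tup 1-\tup 1_F}(a)$ as living in the sub-ideal $\cK(X^{\tup 1-\tup 1_F}I^F)$ via Lemma \ref{l:quot_on_compacts}, and then exploiting the disjointness of $F$ and $\supp(\tup 1-\tup 1_F)$, through Lemma \ref{l:prop_of_projs}.\ref{l:prop_of_projs:ps_sp}, to shuttle $Q^F_t$ across $t(y)^*$ so that the generator $\tau(c^*)\cdot Q^F_t$ of $\cC_I$ becomes explicit. Once that is achieved, the ideal structure absorbs the external factors $t(x)$ and $t(y)^*$ automatically and the approximation in $\cK(X^{\tup 1-\tup 1_F}I^F)$ passes to the norm limit.
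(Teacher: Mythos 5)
Your proposal is correct and follows essentially the same route as the paper: the inclusion $\cC_I\subset\cC_I'$ via $Q^F_t=\sum_{H\supset F}P^H_t$ together with $I^F=\bigcap_{G\supset F}K_I^G$, and the reverse inclusion via $P^F_t=p^{\tup 1-\tup 1_F}_tQ^F_t$, the identification $\varphi^{\tup 1-\tup 1_F}(K_I^F)\subset\cK(X^{\tup 1-\tup 1_F}I^F)$, and commuting $Q^F_t$ past $t(y)^*$ for $y\in X^{\tup 1-\tup 1_F}$. The only difference is cosmetic: the paper shows $\psi_\tau^{\tup 1-\tup 1_F}(\cK(X^{\tup 1-\tup 1_F}I^F))Q^F_t\subset\cC_I$ by sandwiching $\tau(a)Q^F_t$ between $t(x)$ and $t(y)^*$, while you read the same rank-one computation in the opposite direction.
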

\begin{proof}
  Denote by $\cC_I'$ the ideal generated by the subspaces $\tau(K^F_I)\cdot P^F_t$.
  We first show that $\cC_I\subset \cC_I'$.
  Proposition \ref{p:T_and_inv_families_equiv} implies
  \[
    I^F = I_{K_I}^F = \bigcap_{G\supset F} K^G.
  \]
  Therefore, for any $a\in I^F$ and $G\supset F$, we have $a\in K^G$ and the element $\tau(a)\cdot P^G_t$ is in $\cC_I'$.
  Thus,
  \[
    a\cdot Q_t^F = a\cdot \sum_{G\supset F} P_t^G =\sum_{G\supset F} a\cdot P_t^G
  \]
  is also an element of $\cC_I'$.
  We conclude that $\tau(I^F)\cdot Q^F_t \subset \cC_I'$ for all $F\in \cF$ and, hence, the ideal $\cC_I$ lies inside $\cC_I'$.

  To prove the other inclusion, consider arbitrary elements $a\in I^F$, $x,y\in X^{\tup 1 - \tup 1_F}$.
  We have
  \[
    t(x)\tau(a)Q_t^Ft(y)^* = t(x)\tau(a)t(y)^* Q_t^F = \psi_\tau^{\tup 1 - \tup 1_F}(\theta_{xa, y}) Q_t^F \in \cC_I.
  \]
  We have used that $t(y)$ commutes with $Q_t^F$ by Lemma \ref{l:prop_of_projs}.
  Therefore, we have an inclusion $\psi_\tau^{\tup 1 - \tup 1_F}(\cK(X^{\tup 1 - \tup 1_F}I^F))\cdot Q^F_t \subset \cC_I$.
  By definition, we have $K^FX^{\tup 1 - \tup 1_F} \subset X^{\tup 1 - \tup 1_F}I^F$, so $\varphi^{\tup 1 - \tup 1_F}(K^F) \subset \cK(X^{\tup 1 - \tup 1_F}I^F)$.
  It follows that
  \[
    \tau(K^F)P^F_t = \tau(K^F)\prod_{i\notin F}p_\tau^{\tupi}Q^F_t = \psi_\tau^{\tup 1 - \tup 1_F}(\varphi^{\tup 1 - \tup 1_F}(K^F))\prod_{i\in F} Q^F_t \subset \cC_I
  \]
  and $\cC_I'\subset \cC_I$.
  This shows that $\cC_I = \cC_I'$ and the lemma is proved.
\end{proof}
Lemma \ref{l:CI_gen_by_KI} is useful, since covariance conditions coming from invariant families are easier to work with.
We denote the representation of $(A, X)$ on $\cNO(X, I_K)$ by $(\rho_K, r_K)$.
We extend this representation to a representation $(\bar\rho_K, \bar r_K)$ of $(\bA_K, \bX_K)$ on $\cNO(X, I_K)$ by
\begin{equation} \label{e:bar_rho_K}
  \begin{split}
    \bar\rho_K(\ba) &= \sum_{F\in \cF} \rho_K(\hat \ba_F)\cdot P^F_{r_K}, \\
    \bar r_K^{\tup m}(\bx) &= \sum_{F\in \cF} r_K^{\tup m}(\hat \bx_F)\cdot P^F_{r_K}.
  \end{split}
\end{equation}

Recall that we have defined in Section \ref{ss:extended-product-system} the diagonal homomorphism $\Delta_K\colon A\to \bA_K$ as $\Delta_K(a) = ([a]_{K^F})_{F\in \cF}$.
We use the same notation for the diagonal map $\Delta_K\colon X\to \bX_K$ given by $\Delta_K(x) = ([x]_{K^F})_{F\in \cF}$.
\begin{lemma} \label{l:bar_rho_K}
  Formula \eqref{e:bar_rho_K} defines a gauge-equivariant CNP-representation of $(\bA_K, \bX_K)$ on $\cNO(X, I_K)$.
  Moreover, we have $(\rho_K, r_K) = (\bar\rho_K\circ \Delta_K, \bar r_K\circ \Delta_K)$.
\end{lemma}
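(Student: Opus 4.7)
The plan is to verify the claim in the following order: well-definedness of $\bar\rho_K$ and $\bar r_K$; the factorization $(\rho_K, r_K) = (\bar\rho_K\circ \Delta_K, \bar r_K\circ \Delta_K)$; the three algebraic axioms of Definition \ref{d:rep_of_prod_sys}; Nica-covariance; the CNP-condition; and gauge-equivariance. The two workhorses throughout will be Lemma \ref{l:CI_gen_by_KI}, which controls what gets annihilated in the quotient $\cNO(X, I_K)$, and Lemma \ref{l:prop_of_projs}, particularly items \ref{l:prop_of_projs:p_of_a} and \ref{l:prop_of_projs:PFsPG}, which describe the interaction of the projections $P^F_{r_K}$ with $\rho_K(A)$ and with $r_K^{\tup m}(X^{\tup m})$.

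For well-definedness, I would use that $K = K_{I_K}$ by Proposition \ref{p:T_and_inv_families_equiv}, so that Lemma \ref{l:CI_gen_by_KI} yields $\rho_K(K^F) \cdot P^F_{r_K} = 0$. Two lifts of $\ba_F$ differ by an element of $K^F$, making $\rho_K(\hat\ba_F) P^F_{r_K}$ lift-independent; for $\bx_F$ the difference of lifts lies in $X^{\tup m} K^F$, and the ambiguity is absorbed using the commutation statement in Lemma \ref{l:prop_of_projs}.\ref{l:prop_of_projs:p_of_a}. The factorization $(\rho_K, r_K) = (\bar\rho_K \circ \Delta_K, \bar r_K \circ \Delta_K)$ reduces to $\sum_F P^F_{r_K} = 1$, which is a direct consequence of Nica-covariance of $(\rho_K, r_K)$. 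For the three algebraic axioms I would compute componentwise, the key relation being $P^F r_K^{\tup m}(y) P^G = r_K^{\tup m}(y) P^G$ if $F = G\setminus \supp\tup m$ and $0$ otherwise (Lemma \ref{l:prop_of_projs}.\ref{l:prop_of_projs:PFsPG}); this matches exactly the formula for the left action on $\bX_K$ from Proposition \ref{p:extended_prod_sys}, giving for instance $\bar\rho_K(\ba) \bar r_K^{\tup m}(\bx) = \bar r_K^{\tup m}(\ba\cdot\bx)$.

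For Nica-covariance I plan to establish $p^{\tupi}_{\bar r_K} = p^{\tupi}_{r_K}$ for every $i$, from which $P^F_{\bar r_K} = P^F_{r_K}$ and Nica-covariance transfer automatically. The direction $\geq$ uses the already-proved factorization: $p^{\tupi}_{\bar r_K} r_K^{\tupi}(x) = p^{\tupi}_{\bar r_K}\bar r_K^{\tupi}(\Delta_K(x)) = \bar r_K^{\tupi}(\Delta_K(x)) = r_K^{\tupi}(x)$, which by the minimality in Lemma \ref{l:prop_of_projs}.\ref{l:prop_of_projs:smallest} gives $p^{\tupi}_{\bar r_K} \geq p^{\tupi}_{r_K}$. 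The direction $\leq$ comes from Lemma \ref{l:prop_of_projs}.\ref{l:prop_of_projs:PFsPG}: each summand $r_K^{\tupi}(\hat\bx_F) P^F_{r_K}$ has range inside that of $P^{F\setminus\{i\}}_{r_K}$, and $\sum_{G \not\ni i} P^G_{r_K} = p^{\tupi}_{r_K}$. For the CNP-condition I would combine Lemma \ref{l:cnp_ideals_of_AK} (which says $\ba \in \cI_K^F$ if and only if $\ba_G = 0$ for all $G\supset F$) with the identity $P^G Q^F = P^G$ for $G \supset F$ and $0$ otherwise, yielding $\bar\rho_K(\ba) Q^F_{\bar r_K} = \sum_{G\supset F} \rho_K(\hat\ba_G) P^G_{r_K} = 0$. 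Finally, gauge-equivariance is immediate once one observes that each $P^F_{r_K}$ is gauge-invariant (as a w*-limit of gauge-invariant elements $\psi^{\tupi}_{r_K}(k_\lambda)$), so $\bar\rho_K(\ba)$ is gauge-fixed while $\bar r_K^{\tup m}(\bx)$ carries the correct gauge weight $\tup m$.

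The main obstacle I anticipate is the identification $p^{\tupi}_{\bar r_K} = p^{\tupi}_{r_K}$: one direction leans on the factorization step while the other depends on a careful understanding of the block decomposition of $\bar r_K^{\tupi}$ induced by the $P^F_{r_K}$. Once this identification is secured, Nica-covariance transfers automatically and all remaining verifications collapse to formal manipulations in the commutative algebra generated by $\{p^{\tupi}_{r_K}\}_{i \in [n]}$.
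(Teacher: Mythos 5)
Your outline follows the paper's proof quite closely: well-definedness of \eqref{e:bar_rho_K} via Lemma \ref{l:CI_gen_by_KI} (using $K=K_{I_K}$ from Proposition \ref{p:T_and_inv_families_equiv}), the factorization $(\rho_K,r_K)=(\bar\rho_K\circ\Delta_K,\bar r_K\circ\Delta_K)$ from $\sum_{F\in\cF}P^F_{r_K}=1$, the representation axioms from orthogonality of the $P^F_{r_K}$ together with Lemma \ref{l:prop_of_projs}.\ref{l:prop_of_projs:PFsPG}, and the CNP condition from Lemma \ref{l:cnp_ideals_of_AK} combined with $P^G_{r_K}Q^F_{r_K}=P^G_{r_K}$ for $G\supset F$ and $=0$ otherwise; the gauge-equivariance remark is also fine. (One small slip: the ambiguity in the module variable is not absorbed by Lemma \ref{l:prop_of_projs}.\ref{l:prop_of_projs:p_of_a}; one factors $x=y\cdot b$ with $b\in K^F$ for $x\in X^{\tup m}K^F$ and uses $r_K^{\tup m}(x)P^F_{r_K}=r_K^{\tup m}(y)\rho_K(b)P^F_{r_K}=0$, again by Lemma \ref{l:CI_gen_by_KI}.)

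The genuine gap is in the Nica-covariance step. You identify only the generator projections, $p^{\tupi}_{\bar r_K}=p^{\tupi}_{r_K}$, and claim Nica-covariance then ``transfers automatically.'' It does not: Nica covariance of $(\bar\rho_K,\bar r_K)$ is the identity $p^{\tup m}_{\bar r_K}p^{\tup k}_{\bar r_K}=p^{\tup m\vee\tup k}_{\bar r_K}$ for \emph{all} $\tup m,\tup k$, and the projections of non-generator degree (e.g.\ $p^{(2,0)}_{\bar r_K}$) are not determined by the generator ones, so equality at the generators gives you $Q^F_{\bar r_K}=Q^F_{r_K}$ and $P^F_{\bar r_K}=P^F_{r_K}$ (enough for your CNP step) but not Nica covariance. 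Moreover, your mechanism for the direction $\leq$ does not extend to general $\tup m$: for $G\cap\supp\tup m=\emptyset$ the range of $r_K^{\tup m}(\hat\bx_G)P^G_{r_K}$ lies inside the range of $P^G_{r_K}\leq p^{\tup 1_{\supp\tup m}}_{r_K}$, and $p^{\tup 1_{\supp\tup m}}_{r_K}$ \emph{dominates} $p^{\tup m}_{r_K}$ rather than being dominated by it, so no conclusion follows. The repair is what the paper does: prove $p^{\tup m}_{\bar r_K}=p^{\tup m}_{r_K}$ for every nonzero $\tup m$. Your $\geq$ argument via the factorization and Lemma \ref{l:prop_of_projs}.\ref{l:prop_of_projs:smallest} works verbatim for all $\tup m$, and for $\leq$ one simply observes
\[
p^{\tup m}_{r_K}\,\bar r_K^{\tup m}(\bx)=\sum_{F\in\cF}p^{\tup m}_{r_K}\,r_K^{\tup m}(\hat\bx_F)P^F_{r_K}=\bar r_K^{\tup m}(\bx),
\]
again by Lemma \ref{l:prop_of_projs}.\ref{l:prop_of_projs:smallest}; then $p^{\tup m}_{\bar r_K}p^{\tup k}_{\bar r_K}=p^{\tup m}_{r_K}p^{\tup k}_{r_K}=p^{\tup m\vee\tup k}_{r_K}=p^{\tup m\vee\tup k}_{\bar r_K}$ for all $\tup m,\tup k$, completing the step.
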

\begin{proof}
  First, we show that the formula \eqref{e:bar_rho_K} does not depend on the choice of the lift $\hat \ba_F$.
  For this, it is enough to show that $\rho_K(b)\cdot P^F_{r_K} = 0$ for any $b\in K^F$ and $r_K^{\tup m}(x)\cdot P^F_{r_K} = 0$ for any $x\in X^{\tup m}K^F$.
  The first one is trivial, since $\rho_K(b)\cdot P^F_{r_K} = [\tau(b)\cdot P^F_{\tau}]_{\cC_{I_K}}$ and the latter is zero by Lemma \ref{l:CI_gen_by_KI}.
  For the second one, write $x = y\cdot b$ for some $y\in X^{\tup m}$ and $b\in K^F$.
  Then, we have $r_K^{\tup m}(x)\cdot P^F_{r_K} = r_K^{\tup m}(y)\cdot \rho_K(b)\cdot P^F_{r_K} = 0$.

  The map $\bar\rho_K$ is a $*$-homomorphism, since the projections $P_{r_K}^F$ are pairwise orthogonal and $\rho_K$ is a $*$-homomorphism.
  Let us check whether $(\bar\rho_K, \bar r_K)$ agrees with the scalar product and the right action of $\bA_K$.
  Let $\ba\in \bA_K$, $\bx,\by\in \bX_K^{\tup m}$ be arbitrary.
  We have
  \begin{multline*}
    \bar r_K(\bx)\bar \rho_K(\ba) = \sum_{F,G\in \cF} r_K(\hat\bx_F)\cdot P^F_{r_K} \cdot \rho_K(\hat\ba_G)\cdot P^G_{r_K} = \sum_{F,G\in \cF} r_K(\hat\bx_F)\rho_K(\hat\ba_G)\cdot P^F_{r_K}  P^G_{r_K} \\
    = \sum_{F\in \cF} r_K(\bar x_F \cdot \bar a_F)\cdot P^F_{r_K} = \bar r_K(\bx\cdot \ba),
  \end{multline*}
  and
  \begin{multline*}
    \bar r_K(\bx)^*\bar r_K(\by) = \sum_{F,G\in \cF} P^F_{r_K} \cdot r_K(\hat\bx_F)^*\cdot r_K(\hat\by_G)\cdot P^G_{r_K} = \sum_{F,G\in \cF} P^F_{r_K} \cdot \rho_K(\langle \hat\bx_F, \hat\by_G\rangle)\cdot P^G_{r_K} \\
    = \sum_{F, G \in \cF} \rho_K(\langle \bar x_F, \bar y_G\rangle)\cdot P^F_{r_K} \cdot P^G_{r_K} = \sum_{F, G \in \cF} \rho_K(\langle \bar x_F, \bar y_F\rangle)\cdot P^F_{r_K} =\bar\rho_K(\langle \bx, \by\rangle).
  \end{multline*}
  In both cases, we have used the pairwise orthogonality of the $P$-projections and Lemma \ref{l:prop_of_projs}.\ref{l:prop_of_projs:p_of_a}.

  The left action of $\bA$ is more involved. For arbitrary $\ba\in \bA_K$, $\bx\in \bX_K^{\tup m}$, we have
  \[
    \bar\rho_K(\ba)\bar r_K(\bx) = \sum_{F,G\in \cF} \rho_K(\hat\ba_F)\cdot P^F_{r_K} \cdot r_K(\hat\bx_F)\cdot P^G_{r_K}.
  \]
  By Lemma \ref{l:prop_of_projs}.\ref{l:prop_of_projs:PFsPG}, we have $P^F_{r_K} \cdot r_K(\hat\bx_G) P^G_{r_K} = r_K(\hat\bx_G)P^G_{r_K}$ if $F = G\setminus \supp(\tup m)$ and $0$ otherwise.
  Therefore, $\bar r_K$ is a left $\bA_K$-module homomorphism.
  A similar calculation shows that $\bar r_K$ preserves multiplication.
  We conclude that $(\bar\rho_K, \bar r_K)$ is a representation.

  To prove that it is Nica-covariant and CNP we need to determine the projections $p^{\tup m}_{\bar r_K}$.
  We claim that $p^{\tup m}_{\bar r_K} = p^{\tup m}_{r_K}$.
  Indeed, observe that
  \[
    p^{\tup m}_{r_K}\cdot \bar r_K^{\tup m}(\bx) = \sum_{F\in \cF} p^{\tup m}_{r_K} r_K^{\tup m}(\hat \bx_F)\cdot P^F_{r_K} = \bar r_K^{\tup m}(\bx) \text{ for any } \bx\in \bX^{\tup m}.
  \]
  By Lemma \ref{l:prop_of_projs}.\ref{l:prop_of_projs:smallest}, $p_{\bar r_K}^{\tup m}$ is the minimal projection fixing $\rho_K(X^{\tup m})$ with left multiplication, so $p^{\tup m}_{\bar r_K}\leq p^{\tup m}_{r_K}$ holds.
  On the other hand, we have $r_K(X^{\tup m})\subset \bar r_K(X^{\tup m}_K)$, so $p^{\tup m}_{\bar r_K}$ fixes $r_K(X^{\tup m})$ with left multiplication and, hence, $p^{\tup m}_{r_K}\leq p^{\tup m}_{\bar r_K}$.
  We conclude that $p^{\tup m}_{\bar r_K} = p^{\tup m}_{r_K}$ and $(\bar\rho_K, \bar r_K)$ is Nica-covariant since $(\rho_K, r_K)$ is Nica-covariant.

  To prove that $(\bar\rho_K, \bar r_K)$ is CNP, we need to show that $\bar\rho_K(a)\cdot Q^F_{\bar r_K} = \bar \rho_K(a)\cdot Q^F_{r_K} = 0$ for all $a\in \cI_K^F$ and $F\in \cF$.
  By Lemma \ref{l:cnp_ideals_of_AK}, an element $a\in \bA_K$ lies in $\cI_K^F$ if and only if $a_G = 0$ for all $G\supset F$.
  Therefore, we have
  \[
    \bar\rho_K(a)\cdot Q^F_{r_K} = \sum_{G\not\supset F} \rho_K(\hat a_G)\cdot P^G_{r_K}\cdot Q^F_{r_K} = 0.
  \]
  The second equality follows from the fact that $P^G_{r_K}Q^F_{r_K}=0$ for $G\not\supset F$.
  Indeed, we have the factor $p^{\tupi}_{r_K}$ in the definition of $P^G_{r_K}$ and an orthogonal factor $(1 - p^{\tupi}_{r_K})$ in $Q^F_{r_K}$ for $i\in F\setminus G$.
  Hence, the representation is CNP. The last statement is trivial.
\end{proof}

We define maps $\gamma_K\coloneqq \omega_{\bX_K}\circ \Delta_K\colon A\to \cNO(\bX_K)$ and $g_K^{\tup m}\coloneqq \omega_{\bX_K}\circ \Delta_K\colon X^{\tup m}\to \cNO(\bX_K)$.
\begin{lemma}
  The pair $(\gamma_K, g_K)$ is an $I_K$-relative Nica-covariant representation of $(A, X)$ on $\cNO(\bX_K)$.
\end{lemma}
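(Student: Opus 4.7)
The plan is to realize $(\gamma_K, g_K)$ as the pullback of the CNP-representation $(\omega_{\bX_K}, o_{\bX_K})$ along the diagonal map $\Delta_K$, and then check the three required properties (representation axioms, Nica-covariance, $I_K$-relativity) in turn. The underlying idea is that $\Delta_K \colon (A,X) \to (\bA_K, \bX_K)$ is a homomorphism of product systems in a suitable sense, so that composing it with any representation of $(\bA_K, \bX_K)$ yields a representation of $(A, X)$; pulling back additional covariance conditions is then a matter of relating the relevant ideals.

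First I would verify compatibility of $\Delta_K$ with the product-system structure. The map $\Delta_K\colon A \to \bA_K$ is a $*$-homomorphism by construction, and on the module level, a componentwise calculation gives $\langle \Delta_K(x), \Delta_K(y) \rangle_F = [\langle x, y\rangle]_{K^F} = \Delta_K(\langle x, y \rangle)_F$, so inner products are preserved. Using the multiplication formula of Proposition \ref{p:extended_prod_sys}, $(\Delta_K(x)\cdot \Delta_K(y))_F = [x\cdot y]_{K^F} = \Delta_K(x\cdot y)_F$. Finally, since one may take $a$ itself as the lift of $\Delta_K(a)_{F\setminus \supp \tup m}$, one gets $\varphi_K^{\tup m}(\Delta_K(a))_F = [\varphi^{\tup m}(a)]_{K^F}$, giving $\varphi_K^{\tup m}(\Delta_K(a))\Delta_K(x) = \Delta_K(\varphi^{\tup m}(a)x)$. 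Precomposing $(\omega_{\bX_K}, o_{\bX_K})$ with $\Delta_K$ therefore produces a genuine representation $(\gamma_K, g_K)$ of $(A,X)$.

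Second, for Nica-covariance the key step is to show $p_{g_K}^{\tup m} = p_{o_{\bX_K}}^{\tup m}$. Since a rank-one operator $\theta_{\Delta_K(x), \Delta_K(y)} \in \cK(\bX_K^{\tup m})$ decomposes under Lemma \ref{l:direct_sum} into the block-diagonal operator with $F$-component $[\theta_{x,y}]_{K^F}$, one obtains $\psi_{g_K}^{\tup m}(T) = \psi_{o_{\bX_K}}^{\tup m}\bigl(\bigoplus_F [T]_{K^F}\bigr)$ for every $T \in \cK(X^{\tup m})$. If $(k_\lambda^{\tup m})_\lambda$ is a canonical approximate identity of $\cK(X^{\tup m})$, then by Lemma \ref{l:quot_on_compacts} each $([k_\lambda^{\tup m}]_{K^F})_\lambda$ is one of $\cK(X^{\tup m}_{K^F})$, so their direct sum is a canonical approximate identity for $\cK(\bX_K^{\tup m})$. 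Taking weak-$*$ limits yields $p_{g_K}^{\tup m} = p_{o_{\bX_K}}^{\tup m}$, hence $Q_{g_K}^F = Q_{o_{\bX_K}}^F$, and Nica-covariance of $(\gamma_K, g_K)$ follows from that of $(\omega_{\bX_K}, o_{\bX_K})$.

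Third, for $I_K$-covariance, by Lemma \ref{l:cnp_ideals_of_AK} one has $\cI_K^F = \{\ba \in \bA_K : \ba_G = 0 \text{ for all } G \supset F\}$, while by definition $I_K^F = \bigcap_{G \supset F} K^G$. Hence $a \in I_K^F$ forces $\Delta_K(a)_G = [a]_{K^G} = 0$ for every $G \supset F$, so $\Delta_K(a) \in \cI_K^F$. Since $(\omega_{\bX_K}, o_{\bX_K})$ is a CNP-representation and $Q_{g_K}^F = Q_{o_{\bX_K}}^F$ by the previous step, we conclude $\gamma_K(a)Q_{g_K}^F = \omega_{\bX_K}(\Delta_K(a))Q_{o_{\bX_K}}^F = 0$. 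I expect the identification $p_{g_K}^{\tup m} = p_{o_{\bX_K}}^{\tup m}$ to be the only step requiring real care; everything else is a direct translation of the definitions through $\Delta_K$.
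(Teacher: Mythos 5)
Your proposal is correct and follows essentially the same route as the paper: pull back the universal CNP-representation along $\Delta_K$, identify $p^{\tup m}_{g_K}$ with $p^{\tup m}_{o_{\bX_K}}$, and deduce $I_K$-covariance from $\Delta_K(I_K^F)\subset \cI_K^F$ via Lemma \ref{l:cnp_ideals_of_AK}. The only divergence is in the projection identity: the paper obtains it in one line from the minimality property of Lemma \ref{l:prop_of_projs}.\ref{l:prop_of_projs:smallest} together with $g_K(X^{\tup m})\cdot\omega_{\bX_K}(\bA_K)=o_{\bX_K}(\bX^{\tup m}_K)$, while your computation with $\psi^{\tup m}_{g_K}(T)=\psi^{\tup m}_{o_{\bX_K}}\bigl(\bigoplus_{F}[T]_{K^F}\bigr)$ and approximate identities also works, since the weak limit defining $p^{\tup m}_s$ is the support projection of $\psi^{\tup m}_s$ and does not depend on which approximate identity one uses.
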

\begin{proof}
  It is obvious that the pair $(\gamma_K, g_K)$ forms a representation.
  Since $g_K(X^{\tup m})\cdot \omega_{\bX_K}(\bA_K) = o_{\bX_K}(\bX_K^{\tup m})$, a projection $p$ satisfies $p\cdot g_K^{\tup m}(x) = g_K^{\tup m}(x)$ for all $x\in X^{\tup m}$ if and only if $p\cdot o_{\bX_K}^{\tup m}(\bx)$ for all $\bx\in \bX^{\tup m}$.
  We deduce from Lemma \ref{l:prop_of_projs}.\ref{l:prop_of_projs:smallest} that $p^{\tup m}_{g_K} = p^{\tup m}_{o_{\bX_K}}$ for all $\tup m\in \N^k$.
  In particular, the representation is Nica-covariant.

  To show that it is $I_K$-relative, it is enough to show that $\Delta_K(I_K^F) \subset \cI_K^F$ for all $F\in \cF$.
  Indeed, in this case we have $\gamma_K(a)\cdot Q^F_{g_K} = \omega_{\bX_K}(\Delta_K(a))\cdot Q^F_{o_{\bX_K}} = 0$ for all $a\in I_K^F$, since $(\omega_{\bX_K}, o_{\bX_K})$ is CNP.

  To prove the inclusion, recall that $I_K^F = \bigcap_{G\supset F} K^G$.
  Therefore, for any element $a\in I_K^F$, we have $a\in K^G$ and $(\Delta_K(a))_G = [a]_{K^G} =0$ for all $G\supset F$.
  But this is exactly the condition that $\Delta_K(a)\in \cI_K^F$ by Lemma \ref{l:cnp_ideals_of_AK}.
  This proves that $(\gamma_K, g_K)$ is $I_K$-relative.
\end{proof}

\begin{proposition} \label{p:iso_of_relative}
  The induced map $\bar \rho_K \times \bar r_K\colon \cNO(\bX_K) \to \cNO(X, I_K)$ is an isomorphism with inverse map $\gamma_K \times_{I_K} g_K\colon \cNO(X, I_K) \to \cNO(\bX_K)$.
  It fits into the commutative diagram
  \[
    \begin{tikzcd}
      \cNO(\bX_K) \arrow[r, "{\bar \rho_K \times \bar r_K}"] \arrow[d, two heads] & {\cNO(X, I_K)} \arrow[d, two heads] \\
      \cNO(\bX_{K'}) \arrow[r, "{\bar \rho_{K'} \times \bar r_{K'}}"]                   & {\cNO(X, I_{K'})}
    \end{tikzcd}
  \]
  for any invariant family $K'\succeq K$.
\end{proposition}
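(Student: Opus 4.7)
The plan is to show that $\alpha \coloneqq \bar\rho_K \times \bar r_K$ and $\beta \coloneqq \gamma_K \times_{I_K} g_K$ are mutually inverse. Checking $\alpha \circ \beta = \mathrm{id}_{\cNO(X, I_K)}$ on generators is straightforward: since $\gamma_K = \omega_{\bX_K} \circ \Delta_K$ and $\sum_{F \in \cF} P^F_{r_K} = 1$,
\[(\alpha \circ \beta)(\rho_K(a)) = \alpha(\gamma_K(a)) = \bar\rho_K(\Delta_K(a)) = \sum_F \rho_K(a) P^F_{r_K} = \rho_K(a),\]
and analogously for $r_K^{\tup m}(x)$.

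For $\beta \circ \alpha = \mathrm{id}_{\cNO(\bX_K)}$ I would appeal to the universal property of $\cNO(\bX_K)$: it suffices to check that the CNP-representations $(\beta \circ \bar\rho_K, \beta \circ \bar r_K)$ and $(\omega_{\bX_K}, o_{\bX_K})$ of $(\bA_K, \bX_K)$ agree. The equality $P^F_{g_K} = P^F_{o_{\bX_K}}$ holds because $g_K(X^{\tupi}) \cdot \omega_{\bX_K}(\bA_K)$ is norm-dense in $o_{\bX_K}(\bX^{\tupi}_K)$ (by density of $\Delta_K(X^{\tupi}) \cdot \bA_K$ in $\bX^{\tupi}_K$) and Lemma \ref{l:prop_of_projs}.\ref{l:prop_of_projs:smallest} applies in both directions. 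Thus the required equalities reduce to
\[\omega_{\bX_K}(\ba) = \sum_F \omega_{\bX_K}(\Delta_K(\hat\ba_F)) P^F_{o_{\bX_K}}, \qquad o_{\bX_K}^{\tup m}(\bx) = \sum_F o_{\bX_K}^{\tup m}(\Delta_K(\hat\bx_F)) P^F_{o_{\bX_K}}.\]

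The main obstacle is proving these identities. Decomposing $\Delta_K(\hat\ba_F) = \sum_G \bc^{F,G}$, where $\bc^{F,G} \in \bA_K$ has $G$-th component $[\hat\ba_F]_{K^G}$ and all other components zero, the first identity is equivalent (via $\sum_F P^F_{o_{\bX_K}} = 1$) to the orthogonality statement
\[\omega_{\bX_K}(\bc)\, P^F_{o_{\bX_K}} = 0 \qquad \text{for every } \bc \in \bA_K \text{ supported only on a }G\text{-summand with }G \neq F.\]
I would prove this by splitting into two exhaustive cases. If $G \not\subset F$, choose $i \in G \setminus F$; from the description of $\ker\varphi^{\tupi}_K$ preceding Lemma \ref{l:cnp_ideals_of_AK}, $\bc$ lies in $\ker\varphi^{\tupi}_K$ (since $i \in G$), so Lemma \ref{l:prop_of_projs}.\ref{l:prop_of_projs:p_of_a} gives $\omega_{\bX_K}(\bc)\, p^{\tupi}_{o_{\bX_K}} = 0$; since $i \notin F$, $p^{\tupi}_{o_{\bX_K}}$ is a factor of $P^F_{o_{\bX_K}}$. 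If instead $G \not\supset F$, then $\bc \in \cI^F_K$ by Lemma \ref{l:cnp_ideals_of_AK}, so the CNP-covariance of $(\omega_{\bX_K}, o_{\bX_K})$ yields $\omega_{\bX_K}(\bc)\, Q^F_{o_{\bX_K}} = 0$; and $Q^F_{o_{\bX_K}}$ is a factor of $P^F_{o_{\bX_K}}$. These two cases cover every $G \neq F$. The identity for $o_{\bX_K}^{\tup m}$ follows by expressing any element of $\bX_K^{\tup m}$ supported on a single $G$-summand as a limit of right-multiplications by an approximate identity of $A/K^G$ lifted to $\bA_K$, and invoking the orthogonality.

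Finally, for the commutative diagram with $K \preceq K'$: since $I_K^F = \bigcap_{G \supset F} K^G \subseteq I_{K'}^F$, we have $\cC_{I_K} \subset \cC_{I_{K'}}$ and the right vertical is the canonical quotient; the quotients $\bA_K \twoheadrightarrow \bA_{K'}$ and $\bX_K \twoheadrightarrow \bX_{K'}$ induce by universality a CNP-representation of $(\bA_K, \bX_K)$ on $\cNO(\bX_{K'})$, yielding the left vertical. Commutativity follows by checking on the generators of $\cNO(\bX_K)$.
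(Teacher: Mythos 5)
Your proposal is correct, but it proves the isomorphism by a genuinely different route than the paper. The paper first notes that $\bar\rho_K\times\bar r_K$ is surjective and gauge-equivariant, reduces the isomorphism claim to injectivity of $\bar\rho_K$ via the GIUT (Proposition \ref{p:giut}), and proves injectivity by contradiction: the kernel is $N/K$ for an invariant family $N\succeq K$ by Proposition \ref{p:invariant_ideals_of_AK}, a strict inclusion $K\prec N$ produces an element $a\in I_N^G\setminus I_K^G$ with $\rho_K(a)Q^G_{r_K}=0$, and applying $\gamma_K\times_{I_K}g_K$ together with Lemma \ref{l:cIF_is_max} forces $\Delta_K(a)\in\cI_K^G$, a contradiction; only afterwards is $\gamma_K\times_{I_K}g_K$ identified as the inverse by the generator computation you also perform. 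You instead verify both compositions are the identity directly, the nontrivial one being $(\gamma_K\times_{I_K}g_K)\circ(\bar\rho_K\times\bar r_K)=\id$, which you reduce to the orthogonality relation $\omega_{\bX_K}(\bc)P^F_{o_{\bX_K}}=0$ for $\bc$ supported on a single $G$-summand with $G\neq F$; your two cases are sound (for $i\in G\setminus F$ one has $\bc\in\ker\varphi^{\tupi}_K$ and $p^{\tupi}_{o_{\bX_K}}$ is a factor of $P^F_{o_{\bX_K}}$, while for $G\not\supset F$ one has $\bc\in\cI^F_K$ by Lemma \ref{l:cnp_ideals_of_AK} and the CNP condition kills $Q^F_{o_{\bX_K}}$), and they exhaust $G\neq F$; the approximate-identity trick handles the module identity, and the equalities $p^{\tup m}_{g_K}=p^{\tup m}_{o_{\bX_K}}$ and $\beta(P^F_{r_K})=P^F_{g_K}$ you need are exactly the preceding unnamed lemma and Lemma \ref{l:prop_of_projs}.\ref{l:prop_of_projs:composition}. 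What your route buys is self-containedness: it uses only the easy direction of CNP covariance of $(\omega_{\bX_K},o_{\bX_K})$ and the explicit computations of $\ker\varphi^{\tupi}_K$ and $\cI^F_K$, avoiding Lemma \ref{l:cIF_is_max}, Proposition \ref{p:invariant_ideals_of_AK} and the GIUT at this step; what the paper's route buys is brevity, since it recycles machinery already established for the classification of invariant ideals. Your treatment of the commuting square matches the paper's (which simply calls it straightforward), and is adequate.
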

\begin{proof}
  The map $\bar \rho_K \times \bar r_K$ is surjective and gauge-equivariant.
  By the GIUT (Proposition \ref{p:giut}), it is an isomorphism if and only if $\bar\rho_K$ is injective.

  By Proposition \ref{p:invariant_ideals_of_AK}, the kernel of $\bar\rho_K$ is described by an invariant family $N \succeq K$, i.e., $\ker \bar\rho_K = N/K = \bigoplus_{F\in \cF} N^F/K^F\subset \bA_K$.
  Suppose that the kernel is nontrivial, so that $K$ is strictly contained in $N$.
  By the lattice isomorphism between invariant families and T-families, we also have that $I_K$ is strictly contained in $I_N$.
  Hence, there is $G\in \cF$ such that $I_K^G \subsetneq I_N^G$.

  Consider an arbitrary element $a \in I_N^G \setminus I_K^G$.
  Recall that $I_N^G = \bigcap_{H\supset G} N^G$ and $I_K^G = \bigcap_{H\supset G} K^G$.
  Therefore, $a\in N^F$ for all $F\supset G$ and there is at least one $H\supset G$ such that $a\notin K^H$.
  Define an element $\bb \in \bA_K$ by
  \[
    \bb_F =
    \begin{cases}
      [a]_{K^F} & \text{if } F\supset G, \\
      0         & \text{otherwise}.
    \end{cases}
  \]
  By the discussion above, $\bb\subset N/K$ and $\bb_H \neq 0$ for some $H\supset G$.

  We calculate
  \[
    0 = \bar\rho_K(\bb) = \sum_{H\supset G} \rho_K(a) \cdot P^H_{r_K} = \rho_K(a) \cdot Q^G_{r_K}.
  \]
  Furthermore, we have
  \[
    0 = (\gamma_K\times_{I_K} g_K)(\rho_K(a)\cdot Q^G_{r_K}) =  \gamma_K(a)\cdot Q^G_{g_K} = \omega_{\bX_K}(\Delta_K(a))\cdot Q^G_{o_{\bX_K}},
  \]
  which implies that $\Delta_K(a)\in \cI_K^G$ by Lemma \ref{l:cIF_is_max}.
  This means that $\Delta_K(a)_H = [a]_{K^H} = 0$ for all $H\supset G$.
  This is a contradiction, since we have showed above that $a\notin K^H$ for some $H\supset G$.
  Therefore, $\bar\rho_K$ is injective and $\bar \rho_K \times \bar r_K$ is an isomorphism.

  To show that $\gamma_K \times_{I_K} g_K$ is the inverse map, it is enough to show that $\bar\rho_K\times \bar r_K \circ \gamma_K = \rho_K$ and $\bar\rho_K\times \bar r_K \circ g_K = r_K$.
  This is because $\rho_K\times_{I_K} r_K = \id_{\cNO(X, I_K)}$.
  We calculate
  \[
    \bar\rho_K\times \bar r_K(\gamma_K(a)) = \bar\rho_K\times \bar r_K(\omega_{\bX_K}(\Delta_K(a))) = \bar\rho_K(\Delta_K(a)) = \rho_K(a)
  \]
  for all $a\in A$.
  The last equality follows from Lemma \ref{l:bar_rho_K}.
  The equality for $g_K$ is proved similarly.
  The commutativity of the diagram is straightforward from the definitions.
\end{proof}

Compare the statements of the following corollary with Proposition \ref{p:giut}.
\begin{corollary}[$I$-relative GIUT]
  Let $(A, X)$ be a proper product system and let $I\subset A$ be a T-family.
  Suppose that $(\sigma, s)$ is an $I$-Cuntz-Nica-Pimsner covariant representation on the \Cs{} $D$.
  The map $\sigma\times_I s\colon \cNO(X,I)\to D$ is faithful if and only if $\sigma(a)Q^F_s = 0$ implies $a\in I^F$ and $(\sigma, s)$ admits a gauge action.
\end{corollary}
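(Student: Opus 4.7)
The plan is to reduce to the standard gauge-invariant uniqueness theorem (Proposition~\ref{p:giut}) applied to the extended product system $(\bA_K, \bX_K)$, where $K \coloneqq K_I$ is the invariant family associated to $I$. By Proposition~\ref{p:iso_of_relative}, the canonical map $\bar\rho_K \times \bar r_K \colon \cNO(\bX_K) \to \cNO(X, I)$ is an isomorphism, so it suffices to lift $(\sigma, s)$ to a CNP-representation $(\bar\sigma, \bar s)$ of $(\bA_K, \bX_K)$ on $D$ whose induced map composes with $\bar\rho_K \times \bar r_K$ to give $\sigma \times_I s$.

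First I would define $(\bar\sigma, \bar s)$ by the same formula as \eqref{e:bar_rho_K}, replacing $(\rho_K, r_K)$ by $(\sigma, s)$. By Lemma~\ref{l:CI_gen_by_KI}, the $I$-CNP covariance of $(\sigma, s)$ is equivalent to $\sigma(K^F) P_s^F = 0$ for every $F \in \cF$, which guarantees that the formula is independent of the choice of lifts. The proof of Lemma~\ref{l:bar_rho_K} then applies with essentially no change to show that $(\bar\sigma, \bar s)$ is a CNP-representation of $(\bA_K, \bX_K)$ and that $\bar\sigma \times \bar s = (\sigma \times_I s) \circ (\bar\rho_K \times \bar r_K)$. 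Since the projections $P_s^F$ are built from gauge-invariant elements in $D''$, a gauge action on $D$ for $(\sigma, s)$ is automatically a gauge action for $(\bar\sigma, \bar s)$, and conversely (using $\sigma = \bar\sigma \circ \Delta_K$ and $s = \bar s \circ \Delta_K$). Consequently, $\sigma \times_I s$ is faithful if and only if $\bar\sigma \times \bar s$ is, which by Proposition~\ref{p:giut} is equivalent to $\bar\sigma$ being faithful together with the existence of a gauge action.

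It then remains to prove, assuming a gauge action exists, that $\bar\sigma$ is faithful if and only if $\sigma(a) Q_s^F = 0$ forces $a \in I^F$. For the forward direction, given such $a$ and $F$ I would define $\ba \in \bA_K$ by setting $\hat\ba_G = a$ for $G \supset F$ and $\hat\ba_G = 0$ otherwise; a direct computation gives $\bar\sigma(\ba) = \sigma(a) \sum_{G \supset F} P_s^G = \sigma(a) Q_s^F = 0$, and faithfulness of $\bar\sigma$ forces $a \in \bigcap_{G \supset F} K^G = I_K^F = I^F$. For the reverse direction, I would mimic the argument of Proposition~\ref{p:iso_of_relative}: the ideal $\ker \bar\sigma$ is the restriction of the gauge-invariant ideal $\ker(\bar\sigma \times \bar s)$, hence invariant, and by Proposition~\ref{p:invariant_ideals_of_AK} has the form $N/K$ for an invariant family $N \succeq K$. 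If $N \neq K$, the lattice isomorphism in Proposition~\ref{p:T_and_inv_families_equiv} produces $G \in \cF$ and $a \in I_N^G \setminus I_K^G$; defining $\bb \in \bA_K$ by $\hat\bb_H = a$ for $H \supset G$ and $0$ otherwise yields a nonzero element of $N/K$ with $\bar\sigma(\bb) = \sigma(a) Q_s^G = 0$, contradicting the hypothesis via $a \in I^G = I_K^G$. The main technical hurdle is keeping track of the several parametrizations (T-families, invariant families, and ideals of $\bA_K$) so that the witness $\bb$ lands precisely where required; once this bookkeeping is in place, the representation-theoretic content reduces cleanly to Proposition~\ref{p:giut}.
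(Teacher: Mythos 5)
Your proposal is correct and follows essentially the same route as the paper: the paper's (very terse) proof also pulls $(\sigma,s)$ back to the CNP-representation $(\bar\sigma,\bar s)=((\sigma\times_I s)\circ\bar\rho_K,(\sigma\times_I s)\circ\bar r_K)$ of $(\bA_K,\bX_K)$ via Proposition \ref{p:iso_of_relative} and invokes Proposition \ref{p:giut}. Your extra work (the explicit translation between faithfulness of $\bar\sigma$ and the condition ``$\sigma(a)Q^F_s=0\Rightarrow a\in I^F$'' using the elements with components $[a]_{K^G}$ for $G\supset F$) is exactly the bookkeeping the paper leaves implicit, and it is carried out correctly.
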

\begin{proof}
  By Proposition \ref{p:iso_of_relative}, the map $\sigma \times_I s\colon \cNO(X,I) \to D$ is equivalent to the CNP-representation $(\bar\sigma, \bar s) = ((\sigma\times_I s) \circ \bar \rho_k, (\sigma\times_I s)\circ \bar r_K)$ of $(\bA_K, \bX_K)$, where $K = K_I$.
  This reduces the statement to the ordinary GIUT, stated in Proposition \ref{p:giut}.
\end{proof}

We are now ready for the main result of the paper.
\begin{theorem} \label{t:classification-of-ideals}
  Let $(A, X)$ be a proper product system.
  The map $I\mapsto \cC_I$ defines a lattice isomorphism between:
  \begin{enumerate}
  \item the set of T-families and the set of gauge-invariant ideals of $\cNT(X)$;
  \item the set of O-families and the set of gauge-invariant ideals of $\cNO(X)$.
  \end{enumerate}
\end{theorem}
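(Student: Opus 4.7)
The plan is to compose the lattice isomorphisms already established: start from the identification $\cNT(X) = \cNO(X, 0)$ provided by Definition \ref{d:relative-algebra}, use Proposition \ref{p:iso_of_relative} to pass to $\cNO(\bX_{K_0})$, then invoke Proposition \ref{p:invariant_ideals_of_AK} to describe its gauge-invariant ideal lattice via invariant families $N\succeq K_0$, and finally convert to T-families via Proposition \ref{p:T_and_inv_families_equiv}.

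\textbf{The bijection for $\cNT(X)$.} Let $K_0 = K_{\{0\}}$ be the invariant family associated to the zero T-family; explicitly, $K_0^F = \ker \varphi^{\tup 1 - \tup 1_F}$ (with the convention that $(X^{\tup 0})^{-1}(0) = 0$). By Proposition \ref{p:iso_of_relative}, $\bar\rho_{K_0} \times \bar r_{K_0}$ provides a gauge-equivariant isomorphism $\cNO(\bX_{K_0}) \cong \cNT(X)$, so gauge-invariant ideals of $\cNT(X)$ correspond to gauge-invariant ideals of $\cNO(\bX_{K_0})$, which by Proposition \ref{p:invariant_ideals_of_AK} are in bijection with invariant families $N$ satisfying $N \succeq K_0$. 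The first technical point is that this constraint is actually vacuous: by Lemma \ref{l:H-GIK}(2) applied with $H = [n]$, every invariant family $N$ satisfies
\[
N^F = (X^{\tup 1 - \tup 1_F})^{-1}\Bigl(\bigcap_{G \supset F} N^G\Bigr) \supset (X^{\tup 1 - \tup 1_F})^{-1}(0) = K_0^F.
\]
Composing with the lattice bijection $N \leftrightarrow I_N$ of Proposition \ref{p:T_and_inv_families_equiv} yields a lattice isomorphism between T-families and gauge-invariant ideals of $\cNT(X)$.

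\textbf{Identifying the map and the CNP case.} For a T-family $I$, set $N := K_I$. The invariant ideal $K_I/K_0 \subset \bA_{K_0}$ corresponding to $I$ is precisely the kernel of the composition
\[
\cNT(X) \xrightarrow{\cong} \cNO(\bX_{K_0}) \twoheadrightarrow \cNO(\bX_{K_I}) \xrightarrow{\cong} \cNO(X, I),
\]
which equals $\cC_I$ by Definition \ref{d:relative-algebra} and the commutative square in Proposition \ref{p:iso_of_relative}. This identifies the bijection as $I \mapsto \cC_I$. For (2), use $\cNO(X) = \cNO(X, \cI)$: gauge-invariant ideals of $\cNO(X)$ correspond to gauge-invariant ideals of $\cNT(X)$ containing $\cC_\cI$, which by monotonicity of $I \mapsto \cC_I$ together with the injectivity from (1) correspond to T-families $I \succeq \cI$, i.e., O-families. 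That $\cI$ is itself a T-family follows because $\cC_\cI$ must correspond under Part (1) to some T-family $I'$; combining the faithfulness of $\omega_X$ with Lemma \ref{l:cIF_is_max} (whose hypothesis is automatic since $X$ is proper) gives $I'^F = \cI^F$.

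\textbf{Main obstacle.} The principal technical point is the automatic inclusion $N \succeq K_0$: without it, the invariant families indexing invariant ideals of $\bA_{K_0}$ would carry an extra constraint that has no counterpart on the T-family side, and the bijection would fail. After this observation, the remainder is bookkeeping: tracing the composite of established isomorphisms, verifying that it is realized by $I \mapsto \cC_I$, and confirming that $\cI$ is a T-family so that the O-family condition $\cI \preceq I$ indeed picks out the image of $\cNO(X)$-ideals.
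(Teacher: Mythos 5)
Your proposal is correct and follows essentially the same route as the paper: identify $\cNT(X)=\cNO(X,0)\cong\cNO(\bX_{K_0})$ via Proposition \ref{p:iso_of_relative}, classify its gauge-invariant ideals through invariant families using Proposition \ref{p:invariant_ideals_of_AK}, translate to T-families via Proposition \ref{p:T_and_inv_families_equiv}, and obtain part (2) from the containment criterion $\cC_\cI\subset\cC_I \iff \cI\preceq I$. Your two added verifications --- that $K_0$ is the least invariant family (so the constraint $N\succeq K_0$ is vacuous) and that $\cC_\cI$ corresponds to the T-family $\cI$ itself (where the inclusion $\cI^F\subset I'^F$ really uses Lemma \ref{l:cIF_is_max} applied to the extended system, as in the proof of Proposition \ref{p:iso_of_relative}) --- merely make explicit points the paper leaves implicit.
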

\begin{proof}
  We know that $\cNT(X) = \cNO(X, 0)$.
  Proposition \ref{p:iso_of_relative} shows that it is then isomorphic to $\cNO(\bX_{K_0})$, where $K_0$ is the smallest invariant family.
  By Proposition \ref{p:invariant_ideals_of_AK} we know that gauge-invariant ideals of $\cNO(\bX_{K_0})$ are in bijection with T-families.
  If $K$ is an invariant family, then the corresponding gauge-invariant ideal is the kernel of $\cNO(\bX_{K_0})\twoheadrightarrow \cNO(\bX_K)$.
  By Proposition \ref{p:iso_of_relative}, this surjection fits into the commutative diagram
  \[
    \begin{tikzcd}
      \cNO(\bX_{K_0}) \arrow[r, "{\bar \rho_{K_0} \times \bar r_{K_0}}"] \arrow[d, two heads] & {\cNT(X)} \arrow[d, two heads, "{\quot[\cC_{I_K}]}"] \\
      \cNO(\bX_{K}) \arrow[r, "{\bar \rho_{K} \times \bar r_{K}}"]                   & {\cNO(X, I_{K})}
    \end{tikzcd}.
  \]
  This shows that $K\mapsto C_{I_K}$ is a lattice isomorphism between invariant families and gauge-invariant ideals of $\cNT(X)$.
  Since $K\mapsto I_K$ is a lattice isomorphism between invariant families and T-families by Proposition \ref{p:T_and_inv_families_equiv}, we get the first part of the theorem.

  The second part follows from the fact that $\cC_I \subset \cC_{\cI}$ if and only if $\cI\preceq I$.
  This is exactly the condition that $I$ is an O-family.
\end{proof}

\section{Higher-rank graphs} \label{s:examples}
Higher-rank graphs and their \Cs{} were introduced by Kumjian and Pask in \cite{KP00}.
A graph of rank $n$ is a countable category $\Gamma$ of paths with a degree functor $\tup d\colon \Gamma \to \N^n$, which satisfies the factorization property: for any path $\gamma \in \Gamma$ and any element $\tup m \in \N^n$ with $\tup m \leq \tup d(\gamma)$, there are unique paths $\gamma(0, \tup m), \gamma(\tup m, \tup d(\gamma)) \in \Gamma$ such that $\tup d(\gamma(0, \tup m)) = \tup m$, $\tup d(\gamma(\tup m, \tup d(\gamma))) = \tup d(\gamma) - \tup m$, and $\gamma = \gamma(0, \tup m)\gamma(\tup m, \tup d(\gamma))$.
We denote by $s,r\colon \Gamma \to \Gamma^0$ the source and range maps, respectively.

We can associate a product system $(c_0(\Gamma^{0}), X(\Gamma))$ to an $n$-graph $\Gamma$ as follows.
The base algebra $c_0(\Gamma^{0})$ is just the algebra of functions that vanish on infinity on the discrete set of vertices $\Gamma^{0}$.
Analogously, $X(\Gamma^{\tup m})$ is the vector space of functions $x\colon \Gamma^{\tup m}\to \C$ on the set of paths of degree $\tup m$ such that $\sum_{\alpha\in \Gamma^{\tup m}, s(\alpha) = v} {|\xi(\alpha)|}^2 < \infty$ for all $v\in \Gamma^0$.
If $\alpha$ is a path of degree $\tup m$ and $v\in \Gamma^0$ is a vertex, then we denote by $\delta_\alpha \in X(\Gamma^{\tup m})$ and $\delta_v \in c_0(\Gamma^{0})$ the characteristic functions of $\alpha$ and $v$, respectively.

The multiplication $\mu^{\tup m, \tup k}$ is given by concatenation of paths:
\[
  \delta_\alpha \cdot \delta_\beta =
  \begin{cases}
    \delta_{\alpha\beta} & \text{if } s(\alpha) = r(\beta), \\
    0                    & \text{otherwise}
  \end{cases}
\]
for all paths $\alpha\in \Gamma^{\tup m}$ and $\beta\in \Gamma^{\tup k}$.
The action of $c_0(\Gamma^{0})$ is defined analogously and the scalar product on $X(\Gamma^{\tup m})$ is given by
\[
  \langle \delta_\alpha, \delta_\beta \rangle =
  \begin{cases}
    \delta_{s(\alpha)} & \text{if } \alpha = \beta, \\
    0                  & \text{otherwise}
  \end{cases}
\]
for all paths $\alpha, \beta\in \Gamma^{\tup m}$.

Raeburn and Sims \cite{RS05} introduced a class of higher-rank graphs called \emph{finitely aligned} graphs.
It was shown in \cite[Theorem 5.4]{RSY03} that the product system $(c_0(\Gamma^{0}), X(\Gamma))$ is compactly aligned if and only if $\Gamma$ is finitely aligned.
In this case, the algebra $C^*(\Gamma)$ of a higher-rank graph $\Gamma$ can be defined as the Cuntz-Nica-Pimsner algebra $\cNO(X(\Gamma))$.
Sims and Yeend showed in \cite[Propsition 5.4]{SY10} that this definition is equivalent to the earlier definition in terms of Cuntz-Krieger families.
Dor-On and Kakariadis defined a subclass of \emph{strongly finitely aligned} graphs in \cite[Definition 7.2]{DK2021}.
They proved that the product system is strongly compactly aligned if and only if $\Gamma$ is strongly finitely aligned.
This fact can be used to describe the higher-rank graph \Cs{} using simpler covariance conditions (see \cite[Theorem 7.6]{DK2021}).

Finally, a graph $\Gamma$ is called \emph{row-finite} if for every vertex $v\in \Gamma^0$ there are only finitely many paths with range $v$ of any given degree.
It is straightforward to show that the graph is row-finite if and only if the associated product system is proper.

Sims classified gauge-invariant ideals of $C^*(\Gamma)$ for finitely aligned graphs in \cite{S06}.
We will show how our results can be used to recover this classification in the case of row-finite graphs.
Moreover, we will give an alternative descriptions of the ideal lattices.

From now on, assume that $\Gamma$ is a row-finite graph.
If $V\subset \Gamma^0$ is a subset, then we denote by $c_0(V)$ the closed ideal generated by $\{\delta_v \colon v\in V\}$.
This defines a bijection between subsets of $\Gamma^0$ and ideals of $c_0(\Gamma^0)$.

For a subset $V\in \Gamma^0$ and $\tup m \in \N^n$, we define
\[
  (\Gamma^{\tup m})^{-1}(V) = \{v \in \Gamma^0 \colon \forall \alpha \in \Gamma^{\tup m} \text{ with } r(\alpha) = v \text{ we have } s(\alpha) \in V\}.
\]
In particular, $(\Gamma^{\tup m})^{-1}(\emptyset)$ is the set of $\tup m$-sources, i.e., the set of vertices not receiving any path of degree $\tup m$.
The following equalities are trivial:
\begin{equation} \label{eq:subset-ideals}
  \begin{split}
    (X(\Gamma^{\tup m}))^{-1}(c_0(V)) &= c_0((\Gamma^{\tup m})^{-1}(V)), \\
    \ker \varphi^{\tup m}_{X(\Gamma)} &= c_0((\Gamma^{\tup m})^{-1}(\emptyset)), \\
    (c_0(V))^\perp &= c_0(\Gamma^0 \setminus V).
  \end{split}
\end{equation}

We can use these formulas to describe the (pre-)CNP ideals $\cJ^F$ and $\cI^F$ of $(c_0(\Gamma^0), X(\Gamma))$.
For $F \in \cF(\Gamma)$, define the subset $\cW^F$ of $\Gamma^0$ by
\begin{align*}
  \cW^F & = \Gamma^0 \setminus \bigcap_{i \in F} (\Gamma^{\tup 1_i})^{-1}(\emptyset) = \\
        & = \{ v \in \Gamma^0 \colon v \text{ receives at least one edge of degree } \tup 1_i \text{ for some } i\in F\}.
\end{align*}
It is easy to see that $\cJ^F = c_0(\cW^F)$.
We can further define subsets $\cU^F \subset \Gamma^0$ by
\begin{align*}
  \cU^F & = \cW^F \cap \bigcap_{i \notin F} (\Gamma^{\tup 1_i})^{-1}(W^F) = \\
        & = \{ v \in \cW^F \colon \forall \gamma\in v\Gamma^{\tup m} \text{ with } \tup m \perp \tup 1_F \text{ we have } s(\gamma) \in W^F\} = \\
        & = \{ v \in \Gamma^0 \colon \forall \gamma \in v\Gamma^{\tup m} \text{ with } \tup m \perp \tup 1_F,~|s(\gamma)\Gamma^{\tup 1_i}|\neq 0 \text{ for some } i \in F\}.
\end{align*}
Here, the notation $v\Gamma^{\tup m}$ means the set of paths $\gamma$ with $r(\gamma) = v$ and $\tup d(\gamma) = \tup m\in \N^n$.
Dor-On and Kakariadis defined these sets in \cite[Definition 7.5]{DK2021} and called them sets of \emph{$F$-tracing vertices}.
With formulas \eqref{eq:subset-ideals} in mind, it is straightforward that $\cI^F = c_0(\cU^F)$.

We now want to describe $X(\Gamma)$-invariant ideals of $C^*(\Gamma)$.
For this, we define two properties of subsets of $\Gamma^0$.
\begin{definition} 
  Let $\Gamma$ be a row-finite $n$-graph and $V\subset \Gamma^0$ be a subset.
  \begin{enumerate}
  \item We say that $V$ is \emph{hereditary} if every path with range in $V$ has source in $V$.
    More precisely, $V$ is hereditary if and only if $V \subset (\Gamma^{\tup m})^{-1}(V)$ for all $\tup m \in \N^n$.
  \item We say that $V$ is \emph{$\cF$-saturated} if for every $F$-tracing vertex $v\in V$ such that for all $i\in F$ and all $\gamma \in v\Gamma^{\tup 1_i}$ we have $s(\gamma) \in V$, we have $v\in V$.
  \end{enumerate}
\end{definition}
Our definition of a hereditary set coincides with the definition of Sims in \cite[Definition 3.1]{S06}.
However, the notion of $\cF$-saturated sets is different from saturated sets of Sims.
We do not know if these two properties are equivalent.
The following result can shed some light on it.
\begin{theorem}\label{t:inv-hered-sat}
  Let $\Gamma$ be a row-finite $n$-graph and let $V\subset \Gamma^0$ be a subset.
  Then $c_0(V)$ is positively $X(\Gamma)$-invariant if and only if $V$ is hereditary and negatively invariant if and only if it is $\cF$-saturated.
  Therefore, $c_0(V)$ is $X(\Gamma)$-invariant if and only if $V$ is both hereditary and $\cF$-saturated.
\end{theorem}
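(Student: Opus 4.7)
The plan is to translate positive invariance and negative invariance separately into the graph-theoretic conditions via the identities \eqref{eq:subset-ideals} and the identity $\cI^F=c_0(\cU^F)$ established just before the theorem, and then invoke Theorem \ref{t:invariant_if_pos_and_neg} to combine them.

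For positive invariance, I would begin from Definition \ref{d:pos-inv} together with Lemma \ref{l:inv_via_scalar_prod}: the ideal $c_0(V)$ is positively $X(\Gamma)$-invariant if and only if $c_0(V)\subset (X(\Gamma^{\tup 1_i}))^{-1}(c_0(V))$ for every $i\in[n]$. By \eqref{eq:subset-ideals} the right-hand side equals $c_0((\Gamma^{\tup 1_i})^{-1}(V))$, so the condition collapses to $V\subset (\Gamma^{\tup 1_i})^{-1}(V)$ for all $i$. Using the factorization property, iterating this over edges of degrees $\tup 1_i$ gives $V\subset (\Gamma^{\tup m})^{-1}(V)$ for all $\tup m\in\N^n$, which is precisely the hereditary condition.

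For negative invariance, the goal is to rewrite $L^F_{c_0(V)}\cap\cI^F\subset c_0(V)$ in graph-theoretic terms. The defining formula $L^F_{c_0(V)}=\bigcap_{i\in F}(X(\Gamma^{\tup 1_i}))^{-1}(c_0(V))$ combined with \eqref{eq:subset-ideals} yields
\[
L^F_{c_0(V)} = c_0\!\left(\bigcap_{i\in F}(\Gamma^{\tup 1_i})^{-1}(V)\right),
\]
and since $\cI^F=c_0(\cU^F)$, the intersection of these ideals in $c_0(\Gamma^0)$ corresponds to the intersection of the underlying vertex sets. Thus negative invariance becomes
\[
\cU^F\cap\bigcap_{i\in F}(\Gamma^{\tup 1_i})^{-1}(V)\subset V\quad\text{for all }F\in\cF.
\]
Unpacking: for every $F\in\cF$ and every $F$-tracing vertex $v$ such that $s(\gamma)\in V$ for every $i\in F$ and every $\gamma\in v\Gamma^{\tup 1_i}$, we must have $v\in V$. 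This is exactly the definition of $\cF$-saturation.

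Finally, Theorem \ref{t:invariant_if_pos_and_neg} yields the last claim: $c_0(V)$ is $X(\Gamma)$-invariant if and only if it is both positively and negatively invariant, i.e., $V$ is both hereditary and $\cF$-saturated. I do not expect a real obstacle here; the proof is essentially bookkeeping once the translations \eqref{eq:subset-ideals} and $\cI^F=c_0(\cU^F)$ are available. The only point requiring a moment of care is that intersections of ideals of the form $c_0(S)$ in $c_0(\Gamma^0)$ correspond to intersections of the sets $S$, which is immediate from the bijection between subsets of $\Gamma^0$ and ideals of $c_0(\Gamma^0)$.
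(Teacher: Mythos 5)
Your proposal is correct and takes essentially the same route as the paper's own proof: translate positive and negative invariance into vertex-set conditions via \eqref{eq:subset-ideals} and $\cI^F = c_0(\cU^F)$, then combine them with Theorem \ref{t:invariant_if_pos_and_neg}. The only cosmetic difference is that the paper states the positive-invariance condition directly for all degrees $\tup m$, whereas you check it on the generators $\tup 1_i$ and iterate, which is equivalent.
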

\begin{proof}
  By Definition \ref{d:pos-inv}, the ideal $c_0(V)$ is positively $X(\Gamma)$-invariant if and only if $c_0(V) \subset (X(\Gamma^{\tup m}))^{-1}(c_0(V))$ for all $\tup m \in \N^n$.
  By \eqref{eq:subset-ideals}, this is equivalent to $V \subset (\Gamma^{\tup m})^{-1}(V)$ for all $\tup m \in \N^n$, which is the definition of a hereditary subset.

  The ideal $c_0(V)$ is negatively $X(\Gamma)$-invariant if and only if $\bigcap_{i\in F} (X(\Gamma)^{\tup 1_i})^{-1}(c_0(V)) \cap \cI^F \subset c_0(V)$ for all $F\in \cF$.
  By \eqref{eq:subset-ideals}, this is equivalent to $\bigcap_{i\in F} (\Gamma^{\tup 1_i})^{-1}(V) \cap \cU^F \subset V$ for all $F\in \cF$.
  A vertex $v$ is in $\bigcap_{i\in F} (\Gamma^{\tup 1_i})^{-1}(V) \cap \cU^F$ if and only if it is $F$-tracing and for all $i\in F$ and all $\gamma \in v\Gamma^{\tup 1_i}$ we have $s(\gamma) \in V$.
  Therefore, $c_0(V)$ is negatively invariant if and only if every such vertex is in $V$, which is the definition of an $\cF$-saturated subset.

  The last statement follows from the fact that $c_0(V)$ is $X(\Gamma)$-invariant if and only if it is both positively and negatively invariant by Theorem \ref{t:invariant_if_pos_and_neg}.
\end{proof}
\begin{corollary} \label{c:compare-sims}
  A hereditary subset $V\subset \Gamma^0$ of row-finite $n$-graph $\Gamma$ is $\cF$-saturated if and only if it is saturated in the sense of Sims (see \cite[Definition 3.1]{S06}).
\end{corollary}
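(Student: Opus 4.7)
The plan is to compare the two definitions of saturation directly, using the hereditary hypothesis to bridge them. Sims's saturation in \cite{S06} is phrased in terms of finite exhaustive sets at a vertex, while $\cF$-saturation here is phrased in terms of $F$-tracing vertices and their degree-$\tup 1_i$ edges for $i \in F$. The strategy is to show these encode the same geometric condition, namely that every path out of $v$ is forced into $V$ after one step in a distinguished direction.

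First I would recall Sims's definition: for a row-finite $n$-graph, a hereditary $V \subset \Gamma^0$ is saturated if, whenever $v \in \Gamma^0$ admits a finite exhaustive set $E \subset v\Gamma$ with $s(E) \subset V$, we have $v \in V$. Then I would establish the key bridge lemma: for $v \in \Gamma^0$ and $F \in \cF$, the set $E_F(v) = \bigcup_{i \in F} v\Gamma^{\tup 1_i}$ is a finite exhaustive set at $v$ if and only if $v \in \cU^F$, i.e. $v$ is $F$-tracing. Both directions follow straight from unwinding the definition of $F$-tracing (every path in degrees $\tup m \perp \tup 1_F$ eventually meets an edge in direction $i \in F$) and the factorization property.

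For the forward implication, assume $V$ is hereditary and Sims-saturated. Given an $F$-tracing $v$ with $s(\gamma) \in V$ for every $i \in F$ and every $\gamma \in v\Gamma^{\tup 1_i}$, the bridge lemma shows $E_F(v)$ is exhaustive with $s(E_F(v)) \subset V$, so Sims's definition yields $v \in V$, proving $V$ is $\cF$-saturated. For the reverse, assume $V$ is hereditary and $\cF$-saturated, and let $E \subset v\Gamma$ be a finite exhaustive set with $s(E) \subset V$. Using the factorization property I would replace each $\alpha \in E$ by its initial edge $\alpha(0, \tup 1_{i_\alpha})$ for some $i_\alpha \in \supp \tup d(\alpha)$, and let $F$ be the set of indices that appear. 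The resulting collection of degree-$\tup 1_i$ paths is still exhaustive at $v$, and by heredity each of their sources is in $V$; I would then argue that any $\gamma \in v\Gamma^{\tup 1_i}$ with $i \in F$ has $s(\gamma) \in V$, again using heredity to push $V$-membership along paths that exhaustiveness forces into $E$. This makes $v$ an $F$-tracing vertex satisfying the hypothesis of $\cF$-saturation, so $v \in V$.

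The main obstacle is the reverse direction: passing from a general finite exhaustive set (of mixed degrees) to the uniform, degree-$\tup 1$ presentation used by $\cF$-saturation. The key tool is the factorization property, which lets us reduce every $\alpha \in E$ to one of its initial edges while preserving the exhaustive property at $v$; combined with the row-finiteness hypothesis, this gives the finite index set $F$ and reduces Sims's condition to the $\cF$-saturation condition exactly.
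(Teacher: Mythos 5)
Your bridge lemma ($E_F(v)=\bigcup_{i\in F}v\Gamma^{\tup 1_i}$ is finite exhaustive at $v$ iff $v\in\cU^F$) is correct, and with it the forward implication (Sims-saturated $\Rightarrow$ $\cF$-saturated) goes through exactly as you say. The gap is in the reverse direction, and it is a real one: you invoke heredity in the wrong direction. Heredity propagates membership in $V$ from the \emph{range} of a path to its \emph{source}, so from $s(\alpha)\in V$ you learn nothing about $s(\alpha(0,\tup 1_{i_\alpha}))$, which is an intermediate vertex of $\alpha$ sitting between $v$ and $s(\alpha)$; the claim ``by heredity each of their sources is in $V$'' fails. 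The same problem recurs in the next step, where you need $s(\gamma)\in V$ for \emph{every} $\gamma\in v\Gamma^{\tup 1_i}$ with $i\in F$: exhaustiveness only gives a common extension of $\gamma$ with some $\alpha\in E$, and heredity then places vertices strictly below $s(\gamma)$ into $V$, not $s(\gamma)$ itself. A rank-one example already breaks the one-shot argument: let $v$ receive a single edge $e$ with $s(e)=w$, let $w$ receive a single edge $f$ with $s(f)=u\in V$, and take $E=\{ef\}$; your reduction needs $w\in V$ immediately, but nothing yields it in one step.

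What the reverse direction actually requires is an induction: for each $e\in v\Gamma^{\tup 1_i}$ with $i\in F$, pull the exhaustive set back along $e$ (the set of paths $\tau$ with $e\tau$ a common extension of $e$ and some $\alpha\in E$ is finite exhaustive at $s(e)$, and its sources lie in $V$ by heredity, since they are sources of extensions of elements of $E$), apply the saturation hypothesis at $s(e)$ first to get $s(e)\in V$, and only then at $v$; the recursion terminates because the joint degree of the exhaustive set drops. (The part of your argument showing $v$ is $F$-tracing for $F=\bigcup_{\alpha\in E}\supp\tup d(\alpha)$ is fine, and the degenerate case $\tup d(\alpha)=\tup 0$, i.e.\ $v\in E$, is trivial.) Note also that this is not the paper's route at all: the paper deduces Corollary \ref{c:compare-sims} indirectly, by comparing Theorem \ref{t:inv-hered-sat} (invariance of $c_0(V)$ iff $V$ hereditary and $\cF$-saturated) with Sims's classification in \cite{S06} (invariance iff hereditary and saturated). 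If you supply the induction above you obtain a genuinely different, purely combinatorial proof; as written, the reverse implication does not go through.
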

\begin{proof}
  Section 3 of \cite{S06} shows that $c_0(V)$ is invariant if and only if $V$ is hereditary and saturated.
  On the other hand, we have just shown that $c_0(V)$ is invariant if and only if $V$ is hereditary and $\cF$-saturated.
  We conclude that these two properties are equivalent, whenever $V$ is hereditary.
\end{proof}
\begin{remark}
  Theorem \ref{t:inv-hered-sat} and Corollary \ref{c:compare-sims} only require results from Section \ref{s:invariant-ideals}.
  We have only assumed there that the product systems are strongly compactly aligned but not necessarily proper.
  Therefore, it is possible to extend these results to the case of strongly finitely aligned graphs.
\end{remark}

We now want to use the results of Section \ref{s:gauge-invariant-ideals} to describe ideals in $C^*(\Gamma) = \cNO(X(\Gamma))$ and $\cT(\Gamma)\coloneqq\cNT(X(\Gamma))$.
\begin{definition} \label{d:vertex-families}
  A collection $V = \{V^F\}_{F\in \cF}$ of subsets of $\Gamma^0$ is called a \emph{T-family of vertices} if the following condition holds: for every $F\in \cF$ and $i\in [n]\setminus F$, a vertex $v\in V^{F \cup \{i\}}$ is in $V^F$ if and only if for all $\gamma \in v\Gamma^{\tup 1_i}$ we have $s(\gamma) \in V^F$.
  It is further called an \emph{O-family of vertices} if $\cU^F \subset V^F$ for all $F\in \cF$.

  A collection $W = \{W^F\}_{F\in \cF} \subset \Gamma^0$ of vertices is called an \emph{invariant family} if $W^G = (\Gamma^{\tupi})^{-1}(W^G \cap W^{G\cup \{i\}})$ for all $G\in \cF$ and $i\in [n]\setminus G$.
\end{definition}
We can rewrite Definition \ref{d:vertex-families} as follows: $V=\{V^F\}_{F\in \cF}$ is a T-family of vertices if and only if $(\Gamma^{\tupi})^{-1}(V^F) \cap V^{F\cup \{i\}} = V^F$ for all $F\in \cF$ and $i\in [n]\setminus F$.
This is equivalent to $(X(\Gamma)^{\tupi})^{-1}(c_0(V^F)) \cap c_0(V^{F\cup \{i\}}) = c_0(V^F)$ for all $F\in \cF$ and $i\in [n]\setminus F$ by \eqref{eq:subset-ideals}, which is the definition of a T-family of ideals.
A T-family of ideals is an O-family of ideals if $\cI^F \subset c_0(V^F)$ for all $F\in \cF$.
This is the same as $\cU^F \subset V^F$ for all $F\in \cF$, which is the definition of an O-family of vertices.

Therefore, a collection of vertices is a T-family (resp. O-family) if and only if the corresponding collection of ideals is a T-family (resp. O-family).
It is also obvious that $V$ is an invariant family of vertices if and only if $c_0(W)$ is an invariant family of ideals.
Moreover, by Proposition \ref{p:T_and_inv_families_equiv}, there is an inclusion-preserving bijection between T-families and invariant families of vertices given by 
\[
  W_V^F \coloneqq (\Gamma^{\tup 1 - \tup 1_F})^{-1}(V^F).
\]

Let $W$ be an invariant family of vertices. 
We construct a higher-rank graph $\bGamma_W$ as follows.
Let $\bGamma_W^0 \coloneqq \bigsqcup_{F\in \cF} \Gamma^0\setminus W^F$.
For a vertex $v$ not in $W^F$, we denote by $v^F\in \bGamma_W^0$ the corresponding vertex in the $F$-component.
Furthermore, we define 
\[
  \bGamma_W^{\tup m} \coloneqq \bigsqcup_{F\in \cF} \Gamma^{\tup m}\setminus (\Gamma^{\tup m} W^F) = \bigsqcup_{F\in \cF} \{ \gamma \in \Gamma^{\tup m} \mid s(\gamma) \notin W^F \}.    
\]v
Analogously, we write $\gamma^F$ to denote the vertex in the $F$-component of $\bGamma_W^{\tup m}$ corresponding to $\gamma \in \Gamma^{\tup m}$.
Finally, we set $s(\gamma^F) = s(\gamma)^F$ and $r(\gamma^F) = r(\gamma)^{F\setminus \supp \tup m}$.
With the obvious path composition map, this defines a higher-rank graph $\bGamma_W$. 

\begin{theorem} \label{t:graph-classification}
  Let $\Gamma$ be a row-finite higher-rank graph.
  There is a lattice isomorphism between T-families (resp. O-families) of vertices and gauge-invariant ideals in $\cT(\Gamma)$ (resp. $C^*(\Gamma)$).

  Moreover, the quotient of $\cT(\Gamma)$ by the ideal corresponding to $V$ is isomorphic to $\cT(\bGamma_{W_V})$.
  Consequently, the algebra $\cT(\Gamma)$ as well as all it gauge-invariant quotients are higher-rank graph algebras.
\end{theorem}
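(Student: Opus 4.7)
The plan is to reduce to Theorem \ref{t:classification-of-ideals} and Proposition \ref{p:iso_of_relative} by identifying the extended product system $\bX_{K_V}$ with the product system $X(\bGamma_{W_V})$ of the extended graph. First I would set up the dictionary between vertex-families and ideal-families: the map $V \mapsto \{c_0(V^F)\}_{F\in \cF}$ is an order-preserving bijection between T-families (resp.\ O-families) of vertices and T-families (resp.\ O-families) of ideals in $c_0(\Gamma^0)$, because the defining conditions translate directly via \eqref{eq:subset-ideals}, and the O-family condition $\cU^F \subseteq V^F$ corresponds to $\cI^F \subseteq c_0(V^F)$ via $\cI^F = c_0(\cU^F)$. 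Combined with Theorem \ref{t:classification-of-ideals}, this gives the first half of the theorem.

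For the quotient description, I would exhibit a canonical isomorphism of proper product systems $\bX_{K_V} \cong X(\bGamma_{W_V})$. The base algebra identification $\bA_{K_V} = \bigoplus_F c_0(\Gamma^0)/c_0(W_V^F) \cong c_0(\bGamma_{W_V}^0)$ is immediate, and each fiber decomposes as $\bX_{K_V}^{\tup m} \cong \bigoplus_F X(\Gamma^{\tup m})/X(\Gamma^{\tup m})c_0(W_V^F) \cong X(\bGamma_{W_V}^{\tup m})$, with basis classes $[\delta_\alpha]_{K_V^F} \mapsto \delta_{\alpha^F}$. The left action formula from Proposition \ref{p:extended_prod_sys}, where the $G$-component of $\bA_{K_V}$ acts on the $F$-component of $\bX_{K_V}^{\tup m}$ only when $G = F\setminus \supp\tup m$, corresponds exactly to the graph rule $r(\alpha^F) = r(\alpha)^{F\setminus \supp\tup m}$, while the multiplication becomes path concatenation. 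Combined with Proposition \ref{p:iso_of_relative}, this yields $\cT(\Gamma)/\cC_{I_V} \cong \cNO(\bX_{K_V}) \cong \cNO(X(\bGamma_{W_V}))$, realizing the quotient as the higher-rank graph algebra of $\bGamma_{W_V}$ and giving the final claim.

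The technical heart is verifying that the CNP-ideals agree under the product system identification. By Lemma \ref{l:cnp_ideals_of_AK}, $\cI^F_{\bX_{K_V}} = \bigoplus_{G : F\not\subseteq G} (c_0(\Gamma^0)/c_0(W_V^G))$, whereas $\cI^F_{X(\bGamma_{W_V})} = c_0(\cU^F_{\bGamma_{W_V}})$, so the task reduces to proving $\cU^F_{\bGamma_{W_V}} = \{v^G : F \not\subseteq G\}$. One containment is easy: when $F \subseteq G$, any edge $\beta^{H'}$ of degree $\tup 1_i$ with $i \in F$ and range $v^G$ would force $H'\setminus\{i\} = G$, which is impossible for $i \in G$, so the trivial path at $v^G$ already breaks the $F$-tracing condition. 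The other direction is where the invariance of $W_V$ enters crucially: for $F \not\subseteq G$ I would pick $i_0 \in F\setminus G$ and, for each path $\alpha^H$ starting at $v^G$ in $\bGamma_{W_V}$, apply the invariance identity $W_V^H = (\Gamma^{\tup 1_{i_0}})^{-1}(W_V^H \cap W_V^{H\cup\{i_0\}})$ to the condition $s(\alpha) \notin W_V^H$, producing an edge $\delta \in s(\alpha)\Gamma^{\tup 1_{i_0}}$ with $s(\delta) \notin W_V^H \cap W_V^{H\cup\{i_0\}}$; this $\delta$ lifts to a degree-$\tup 1_{i_0}$ edge in $\bGamma_{W_V}$ ranging at $s(\alpha^H)$, giving the required $F$-tracing property.
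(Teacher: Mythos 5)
Your proposal follows the paper's proof exactly: the same dictionary between vertex-families and ideal-families via \eqref{eq:subset-ideals} combined with Theorem \ref{t:classification-of-ideals} for the lattice isomorphism, and the same identification of the graph product system of $\bGamma_{W_V}$ with the extended system $(\bA_{K_V},\bX_{K_V})$ of Proposition \ref{p:extended_prod_sys} followed by Proposition \ref{p:iso_of_relative} for the quotient (the paper leaves the identification as ``easy to see''), while your check that the $F$-tracing vertices of $\bGamma_{W_V}$ reproduce Lemma \ref{l:cnp_ideals_of_AK} is correct but redundant, since the CNP-ideals are intrinsic to the product system and hence automatically agree under the isomorphism. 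Note that, exactly like the paper's own argument, you obtain the quotient as $\cNO(X(\bGamma_{W_V}))=C^*(\bGamma_{W_V})$ rather than the literal $\cT(\bGamma_{W_V})$ appearing in the statement; that occurrence of $\cT$ is evidently a slip in the statement, as the theorem's final sentence and the comparison with \cite[Corollary 3.5]{BHRS02} confirm.
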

\begin{proof}
  We have established above that T-families and O-families of vertices correspond bijectively to T-families and O-families of ideals.
  Therefore, the first claim follows immediately from Theorem \ref{t:classification-of-ideals}.

  For the second claim, it is easy to see that the product system $(c_0(\bGamma_{W_V}), X(\bGamma_{W_V}))$ is isomorphic to the product system $(\boldsymbol{c_0}(\Gamma)_{c_0(W_V)}, \boldsymbol{X}(\Gamma)_{c_0(W_V)})$ constructed in Proposition \ref{p:extended_prod_sys}.
  Then, the claim follows from Proposition \ref{p:iso_of_relative}.
\end{proof}
The second claim of Theorem \ref{t:graph-classification} is a generalization of \cite[Corollary 3.5]{BHRS02}, where the authors described quotients of rank-1 graph algebras as graph algebras of extended graphs.
To our knowledge, in case of higher-rank graphs, such extended graph was constructed only for the Toeplitz algebra by Pangalela in \cite{Pangalela16} but not for their quotients.

\printbibliography

\end{document}